\documentclass[a4paper,reqno,11pt]{amsart}

\usepackage{amsmath}
\usepackage{amssymb}
\usepackage{amsthm}
\usepackage{latexsym}
\usepackage[all]{xypic}
\usepackage{amsfonts}
\usepackage{mathrsfs}
\usepackage{graphicx,color}
\usepackage{xcolor}
\usepackage{tikz}
\usepackage{graphics}
\usetikzlibrary[shapes]
\xyoption{all}
\usepackage[colorlinks=true,linkcolor=blue,citecolor=blue]{hyperref}
\usepackage{anysize,hyperref}
\marginsize{30mm}{30mm}{23mm}{23mm} 


\usepackage{soul}
\setstcolor{red}


\newcommand{\cal}{\mathcal}

\usepackage{enumitem}
\newtheorem{theorem}{Theorem}[section]
\newtheorem{lemma}[theorem]{Lemma}
\newtheorem{corollary}[theorem]{Corollary}
\newtheorem{proposition}[theorem]{Proposition}
\theoremstyle{definition}
\newtheorem{definition}[theorem]{Definition}
\newtheorem{definition-proposition}[theorem]{Definition-Proposition}
\newtheorem{remark}[theorem]{Remark}
\newtheorem{example}[theorem]{Example}

\def\C{\mathcal{C}}
\def\D{\mathcal{D}}

\def\T{\mathcal{T}}

\def\W{\mathscr{W}}

\def\U{\mathcal{U}}
\def\A{\mathcal{A}}
\def\B{\mathcal{B}}
\def\X{\mathcal{X}}
\def\D{\mathcal{D}}
\def\F{\mathcal{F}}
\def\Y{\mathcal{Y}}
\def\V{\mathcal{V}}
\def\Z{\mathcal {Z}}
\def\I{\mathcal {I}}
\def\P{\mathcal {P}}
\def\M{\mathcal{M}}
\def\E{\mathbb{E}}

\def \text{\mbox}

\providecommand{\add}{\mathop{\rm add}\nolimits}%
\providecommand{\Ext}{\mathop{\rm Ext}\nolimits}%
\providecommand{\Hom}{\mathop{\rm Hom}\nolimits}%
\renewcommand{\mod}{\mathop{\rm mod}\nolimits}%
\providecommand{\ind}{\mathop{\rm ind}\nolimits}%
\providecommand{\nc}{\mathop{\rm nc}\nolimits}%
\def\XX{\widetilde{\X}}
\def\YY{\widetilde{\Y}}

\hyphenation{ap-pro-xi-ma-tion}
\usetikzlibrary{arrows,decorations.pathmorphing,decorations.pathreplacing,positioning,shapes.geometric,shapes.misc,decorations.markings,decorations.fractals,calc,patterns}
\textwidth 150mm
\textheight 243mm
\begin{document}

\title{Mutation of $n$-cotorsion pairs in extriangulated categories}

\author[Chang]{Huimin Chang}
\address{Department of Applied Mathematics,
The Open University of China,
100039 Beijing,
P. R. China
}
\email{changhm@ouchn.edu.cn}

\author[Liu]{Yu Liu}
\address{ School of Mathematics and Statistics, Shaanxi Normal University, 710062 Xi'an, Shannxi, P. R. China}
\email{recursive08@hotmail.com}

\author[Zhou]{Panyue Zhou$^\ast$}
\address{School of Mathematics and Statistics, Changsha University of Science and Technology, 410114 Changsha, Hunan,  P. R. China}
\email{panyuezhou@163.com}

\thanks{$^\ast$Corresponding author.}

\begin{abstract}
In this article, we introduce the notion of $n$-cotorsion pairs in extriangulated categories, which extends both the cotorsion pairs established by Nakaoka and Palu and the $n$-cotorsion pairs in triangulated categories developed by Chang and Zhou. We further prove that any mutation of an $n$-cotorsion pair remains an $n$-cotorsion pair.
As applications, we provide a geometric characterization of $n$-cotorsion pairs in $n$-cluster categories of type $A_{\infty}$,
and we realize mutations of
$n$-cotorsion pairs geometrically via rotations of certain configurations of
$n$-admissible arcs.
\end{abstract}

\dedicatory{Dedicated to Professor Bin Zhu on the occasion of his 60th birthday}

\subjclass[2020]{18E40; 18G80; 18E10} 

\keywords{mutation; $n$-cotorsion pair; triangulated category; extriangulated category}

\thanks{Huimin Chang is supported by the National Natural Science Foundation of China (Grant No. 12301047). Yu Liu is supported by the National Natural Science Foundation of China (Grant No. 12171397). Panyue Zhou is supported by the National Natural Science Foundation of China (Grant No. 12371034) and by the Scientific Research Fund of Hunan Provincial Education Department (Grant No. 24A0221). }

\maketitle

\section{Introduction}
Extriangulated categories were introduced by Nakaoka and Palu in their foundational work \cite{NP} as a common generalization of exact categories and triangulated categories. This framework encompasses both exact categories, including abelian ones, and extension-closed subcategories of triangulated categories as particular cases. In addition, a number of examples have been constructed that fall outside the realm of either exact or triangulated categories, as shown in \cite{NP,ZZ1,ZZ20,HZZ,FHZZ}, thereby demonstrating the flexibility and breadth of the extriangulated setting.

Let $n$ be a positive integer. Motivated by structural properties of Gorenstein projective and Gorenstein injective modules over Iwanaga-Gorenstein rings, Huerta, Mendoza, and P\'{e}rez~\cite[Definition~2.2]{HMP} formulated the concept of $n$-cotorsion pairs (as well as their left and right variants) in abelian categories. Notably, the case $n = 1$ recovers the classical notion of complete cotorsion pairs.
Building on this idea, He and Zhou~\cite{HZ} extended the definition to the framework of extriangulated categories. In particular, when $n = 1$, their definition coincides with the cotorsion pairs introduced by Nakaoka and Palu in~\cite[Definition~4.1]{NP}.
More recently, Chang and Zhou~\cite{CZ} proposed a notion of $n$-cotorsion pairs in triangulated categories, further generalizing the classical theory. It is important to note that their construction differs from the one in~\cite{HZ}, even when restricted to triangulated settings, thereby giving rise to distinct notions under the same terminology.
For more details, see \cite[Remark 3.4 and Example 3.5]{CZ}.

Iyama and Yoshino~\cite{IY} introduced the notion of mutation in triangulated categories, and subsequently, Zhou and Zhu~\cite{ZZ} proved that the mutation of a torsion pair remains a torsion pair. In a further development, Zhou and Zhu~\cite{ZZ1} formulated the concept of mutation pairs of subcategories in extriangulated categories, extending Iyama-Yoshino's definition~\cite[Definition~2.5]{IY} from the triangulated setting.

The concept of $n$-cluster categories originates from the study of cluster algebras and higher-dimensional homological algebra. Cluster categories were initially introduced as a categorical model to realize the combinatorial structures of cluster algebras, which were developed by Fomin and Zelevinsky \cite{FZ}. In the classical case where $n=1$, cluster categories provided a triangulated framework to understand tilting theory and the process of mutation among tilting objects.
To generalize these ideas to higher dimensions, Iyama \cite{I} developed higher Auslander--Reiten theory and introduced the notion of $n$-cluster tilting subcategories. Building upon this, Keller and others \cite{K,BM,BM1,Ja}constructed $n$-cluster categories as orbit categories of bounded derived categories, which encode the properties of $n$-cluster tilting theory and provide a natural setting for higher-dimensional analogues of classical tilting.

The study of $n$-cluster categories is significant for several reasons. First, they provide a rich framework in which to explore the mutation and classification of $n$-cluster tilting objects. Second, they serve as a bridge between combinatorial and categorical approaches to higher cluster algebras. Third, they have many nice properties, for instance Hom-finite, Krull–Schmidt, and $(n+1)$-Calabi--Yau, a property that connects them to important topics in geometry and physics, including mirror symmetry and string theory.
Moreover, the theory of $n$-cluster categories has led to the development of new algebraic structures such as $n$-abelian categories \cite{G} and higher cotorsion pairs \cite{HMP,HZ,CZ}, enriching the landscape of modern representation theory. These structures also provide a unifying language that links derived categories, triangulated and extriangulated categories, and higher homological techniques. As such, $n$-cluster categories remain a central and dynamic area of research in higher representation theory.

Classifying cotorsion pairs is an essential step toward understanding the internal structure of categories. In triangulated settings, cotorsion pairs are closely connected to the theories of $t$-structures and co-$t$-structures, which are fundamental tools in the study of derived categories and algebraic geometry. In extriangulated categories, which generalize both exact and triangulated categories, cotorsion pairs offer a natural method for analyzing extension theory in settings that do not conform strictly to either traditional framework.
Furthermore, cotorsion pair classification has deep connections with silting theory, support $\tau$-tilting theory, and the theory of cluster tilting subcategories. These relationships have practical implications for constructing and identifying key categorical objects such as rigid and cluster-tilting objects, as well as for understanding approximation theory.
In the context of higher homological algebra, classifying $n$-cotorsion pairs provides insight into higher extension vanishing phenomena and their relation to $n$-cluster tilting theory. This direction of research continues to expand the understanding of homological dimensions and has become an important theme in modern representation theory and categorical algebra.

In this paper, we propose a weaker version of $n$-cotorsion pairs in extriangulated categories, compared to the definition given by He and Zhou~\cite{HZ}. Our formulation not only generalizes the cotorsion pairs introduced by Nakaoka and Palu but also encompasses the $n$-cotorsion pairs in triangulated categories developed by Chang and Zhou. We further show that any mutation of an $n$-cotorsion pair yields another $n$-cotorsion pair.
As applications, we provide a geometric characterization of $n$-cotorsion pairs in $n$-cluster categories of type $A_{\infty}$, and offer a geometric interpretation of their mutations via rotations of certain configurations of $n$-admissible arcs.
We note that an $n$-cluster category of type $A_{\infty}$ contains infinitely many indecomposable objects. Moreover, its Auslander-Reiten quiver consists of $n$ components, making the structure significantly more intricate than in the finite case.

This paper is organized as follows. In Section~2, we recall basic notions of cotorsion pairs and mutation pairs in extriangulated categories. In Section~3, we introduce the definition of $n$-cotorsion pairs, review the construction of subfactor triangulated categories from~\cite{ZZ1}, investigate its compatibility with our setting, and establish the main result. In Section~4, we give a geometric characterization of $n$-cotorsion pairs in $n$-cluster categories of type $A_{\infty}$, and a geometric interpretation of their mutations via rotations of certain configurations of $n$-admissible arcs.

\subsection*{Conventions}
Throughout this paper, we assume that $\mathcal{C}$ is an extriangulated category with enough projective and enough injective objects. An object $P \in \mathcal{C}$ is said to be \emph{projective} if $\mathbb{E}(P, \mathcal{C}) = 0$, and we denote by $\mathcal{P} \subseteq \mathcal{C}$ the full subcategory consisting of projective objects. The category $\mathcal{C}$ is said to have \emph{enough projectives} if every object $C \in \mathcal{C}$ admits a deflation $P \twoheadrightarrow C$ for some $P \in \mathcal{P}$.
Dually, an object $I \in \mathcal{C}$ is called \emph{injective} if $\mathbb{E}(\mathcal{C}, I) = 0$, and we denote by $\mathcal{I} \subseteq \mathcal{C}$ the subcategory of injective objects. The category $\mathcal{C}$ is said to have \emph{enough injectives} if every object $C \in \mathcal{C}$ admits an inflation $C \rightarrowtail I$ for some $I \in \mathcal{I}$.

\section{Preliminaries}
This section provides background material on extriangulated categories, cotorsion pairs, and mutations of subcategories, which will be used throughout the paper.

\subsection{Extriangulated categories}
We begin by summarizing the key terminology and properties of extriangulated categories relevant to our context. For full definitions and foundational results, we refer the reader to~\cite{NP}.

Let $\mathcal{C}$ be an additive category equipped with an additive bifunctor
\[
\mathbb{E} \colon \mathcal{C}^{\mathrm{op}} \times \mathcal{C} \longrightarrow \mathrm{Ab},
\]
where $\mathrm{Ab}$ denotes the category of abelian groups. For any pair of objects $A, C \in \mathcal{C}$, an element $\delta \in \mathbb{E}(C, A)$ is referred to as an \emph{$\mathbb{E}$-extension}.

Let $\mathfrak{s}$ be a correspondence that assigns to each $\mathbb{E}$-extension $\delta \in \mathbb{E}(C, A)$ an equivalence class
\[
\mathfrak{s}(\delta) = \xymatrix@C=0.8cm{[A \ar[r]^x & B \ar[r]^y & C]}.
\]
Such a correspondence $\mathfrak{s}$ is called a \emph{realization} of $\mathbb{E}$ if it makes the diagrams in~\cite[Definition~2.9]{NP} commute.

A triplet $(\mathcal{C}, \mathbb{E}, \mathfrak{s})$ is called an \emph{extriangulated category} if the following conditions are satisfied:
\begin{enumerate}
  \item The functor $\mathbb{E} \colon \mathcal{C}^{\mathrm{op}} \times \mathcal{C} \to \mathrm{Ab}$ is additive bifunctor.
  \item The correspondence $\mathfrak{s}$ is an additive realization of $\mathbb{E}$.
  \item The pair $(\mathbb{E}, \mathfrak{s})$ satisfies the compatibility axioms given in~\cite[Definition~2.12]{NP}.
\end{enumerate}

For notational convenience, we adopt the convention that $\mathcal{C}$ denotes the extriangulated category $(\mathcal{C}, \mathbb{E}, \mathfrak{s})$ whenever no confusion arises.
\vspace{1mm}

We will use the following terminology in the sequel.

\begin{definition}\cite[Definitions 2.15 and 2.19]{NP}
Let $\mathcal{C}$ be an extriangulated category.
\begin{enumerate}
  \item[{\rm (1)}] A sequence $A \xrightarrow{x} B \xrightarrow{y} C$ is called a \emph{conflation} if it realizes an extension $\delta \in \mathbb{E}(C, A)$. In this case, $x$ is referred to as an \emph{inflation}, and $y$ as a \emph{deflation}.

  \item[{\rm (2)}] If a conflation $A \xrightarrow{x} B \xrightarrow{y} C$ realizes the extension $\delta \in \mathbb{E}(C, A)$, the pair $(A \xrightarrow{x} B \xrightarrow{y} C, \delta)$ is called an \emph{$\mathbb{E}$-triangle}, which we write as
  \[
  A \overset{x}{\longrightarrow} B \overset{y}{\longrightarrow} C \overset{\delta}{\dashrightarrow}.
  \]
  When the extension class $\delta$ is not explicitly used in the argument, we omit it for brevity.

  \item[{\rm (3)}] Let $A \overset{x}{\longrightarrow} B \overset{y}{\longrightarrow} C \overset{\delta}{\dashrightarrow}$ and $A' \overset{x'}{\longrightarrow} B' \overset{y'}{\longrightarrow} C' \overset{\delta'}{\dashrightarrow}$ be two $\mathbb{E}$-triangles. A triplet of morphisms $(a, b, c)$ is called a \emph{morphism of $\mathbb{E}$-triangles} if it realizes a morphism $(a, c)\colon \delta \to \delta'$. In this case, we write the following commutative diagram:
  \[
  \xymatrix{
  A \ar[r]^x \ar[d]^a & B \ar[r]^y \ar[d]^b & C \ar@{-->}[r]^{\delta} \ar[d]^c & \\
  A' \ar[r]^{x'} & B' \ar[r]^{y'} & C' \ar@{-->}[r]^{\delta'} &. }
  \]
\end{enumerate}
\end{definition}

Let $\mathcal{C}$ be an extriangulated category with enough projectives and enough injectives, and let $\mathcal{X}$ be a subcategory of $\mathcal{C}$.
We define $\Omega \mathcal{X} := \mathrm{CoCone}(\mathcal{P}, \mathcal{X})$, that is, the subcategory of $\mathcal{C}$ consisting of objects $\Omega X$ that fit into an $\mathbb{E}$-triangle
\[
\Omega X \overset{a}{\longrightarrow} P \overset{b}{\longrightarrow} X \overset{}{\dashrightarrow}
\]
with $P \in \mathcal{P}$ and $X \in \mathcal{X}$. We refer to $\Omega$ as the \emph{syzygy} of $\mathcal{X}$.

Dually, the \emph{cosyzygy} of $\mathcal{X}$ is defined by $\Sigma \mathcal{X} := \mathrm{Cone}(\mathcal{X}, \mathcal{I})$, which consists of objects $\Sigma X$ that appear in an $\mathbb{E}$-triangle
\[
X \overset{c}{\longrightarrow} I \overset{d}{\longrightarrow} \Sigma X \overset{}{\dashrightarrow}
\]
with $I \in \mathcal{I}$ and $X \in \mathcal{X}$. For further details, see~\cite[Definition~4.2 and Proposition~4.3]{LN}.

For a subcategory $\mathcal{X} \subseteq \mathcal{C}$, we define $\Omega^0 \mathcal{X} := \mathcal{X}$, and inductively set
\[
\Omega^k \mathcal{X} := \Omega(\Omega^{k-1} \mathcal{X}) = \mathrm{CoCone}(\mathcal{P}, \Omega^{k-1} \mathcal{X})
\quad \text{for all}~ k > 0.
\]
We call $\Omega^k \mathcal{X}$ the \emph{$k$-th syzygy} of $\mathcal{X}$. The \emph{$k$-th cosyzygy} $\Sigma^k \mathcal{X}$ is defined dually by setting $\Sigma^0 \mathcal{X} := \mathcal{X}$ and
\[
\Sigma^k \mathcal{X} := \mathrm{Cone}(\Sigma^{k-1} \mathcal{X}, \mathcal{I})
\quad\text{for all}~ k > 0.
\]

Liu and Nakaoka~\cite{LN} introduced higher extension groups in extriangulated categories with enough projectives and enough injectives by defining
\[
\mathbb{E}(A, \Sigma^k B) \cong \mathbb{E}(\Omega^k A, B) \quad \text{for all } k \geq 0.
\]
For convenience, we denote this group by
\[
\mathbb{E}^{k+1}(A, B) := \mathbb{E}(A, \Sigma^k B) \cong \mathbb{E}(\Omega^k A, B).
\]
They proved the following result.

\begin{lemma}\label{2}
Let $\mathcal{C}$ be an extriangulated category with enough projectives and injectives. Assume that
\[
\xymatrix{
A \ar[r]^{f} & B \ar[r]^{g} & C \ar@{-->}[r]^{\delta} &
}
\]
is an $\mathbb{E}$-triangle in $\mathcal{C}$. Then for any object $X \in \mathcal{C}$ and any integer $k \geq 1$, the following long exact sequences hold:
\[
\cdots \rightarrow \mathbb{E}^k(X, A) \rightarrow \mathbb{E}^k(X, B) \rightarrow \mathbb{E}^k(X, C)
\rightarrow \mathbb{E}^{k+1}(X, A) \rightarrow \mathbb{E}^{k+1}(X, B) \rightarrow \cdots;
\]
\[
\cdots \rightarrow \mathbb{E}^k(C, X) \rightarrow \mathbb{E}^k(B, X) \rightarrow \mathbb{E}^k(A, X)
\rightarrow \mathbb{E}^{k+1}(C, X) \rightarrow \mathbb{E}^{k+1}(B, X) \rightarrow \cdots.
\]
\end{lemma}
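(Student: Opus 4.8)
The plan is to deduce both long exact sequences from the six-term exact sequences that Nakaoka and Palu attach to an $\mathbb{E}$-triangle in \cite{NP}: for any $\mathbb{E}$-triangle $A\xrightarrow{f}B\xrightarrow{g}C\overset{\delta}{\dashrightarrow}$ and any $Z\in\mathcal{C}$, the sequences
\[
\Hom(Z,A)\to\Hom(Z,B)\to\Hom(Z,C)\xrightarrow{\delta_*}\mathbb{E}(Z,A)\to\mathbb{E}(Z,B)\to\mathbb{E}(Z,C)
\]
\[
\Hom(C,Z)\to\Hom(B,Z)\to\Hom(A,Z)\xrightarrow{\delta^*}\mathbb{E}(C,Z)\to\mathbb{E}(B,Z)\to\mathbb{E}(A,Z)
\]
are exact. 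The two sequences in the statement are dual to one another, so I would carry out only the first in detail. Since $\mathbb{E}^k(X,-)\cong\mathbb{E}(\Omega^{k-1}X,-)$ by definition, the segment $\mathbb{E}^k(X,A)\to\mathbb{E}^k(X,B)\to\mathbb{E}^k(X,C)$ is literally the tail of the first six-term sequence with $Z=\Omega^{k-1}X$; the only genuinely new ingredient is the connecting morphism $\mathbb{E}^k(X,C)\to\mathbb{E}^{k+1}(X,A)$ together with exactness across it.

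To build it, I would fix, for each $k\geq 1$, a chosen defining $\mathbb{E}$-triangle $\Omega^kX\xrightarrow{a}P\to\Omega^{k-1}X\dashrightarrow$ with $P\in\mathcal{P}$. Feeding this $\mathbb{E}$-triangle into the second six-term sequence above with second argument $A$, $B$, and $C$, and using $\mathbb{E}(P,-)=0$, identifies $\mathbb{E}^k(X,-)\cong\mathbb{E}(\Omega^{k-1}X,-)$ with the cokernel of $a^*\colon\Hom(P,-)\to\Hom(\Omega^kX,-)$, naturally in the second variable. Next I would observe that the connecting map $\delta_*\colon\Hom(\Omega^kX,C)\to\mathbb{E}(\Omega^kX,A)$ of the first six-term sequence for $A\to B\to C\dashrightarrow$ (with $Z=\Omega^kX$) vanishes on the image of $a^*|_C$, because for $\varphi\colon P\to C$ one has $\delta_*(\varphi\circ a)=a^*(\delta_*\varphi)$ with $\delta_*\varphi\in\mathbb{E}(P,A)=0$. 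Hence $\delta_*$ factors through $\mathbb{E}^k(X,C)=\operatorname{coker}(a^*|_C)$ and yields the desired map $\mathbb{E}^k(X,C)\to\mathbb{E}(\Omega^kX,A)=\mathbb{E}^{k+1}(X,A)$.

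With this map in hand, exactness of the six-term piece
\[
\mathbb{E}^k(X,A)\to\mathbb{E}^k(X,B)\to\mathbb{E}^k(X,C)\to\mathbb{E}^{k+1}(X,A)\to\mathbb{E}^{k+1}(X,B)\to\mathbb{E}^{k+1}(X,C)
\]
is a short diagram chase: exactness at the first two spots is the first six-term sequence with $Z=\Omega^{k-1}X$; exactness at $\mathbb{E}^k(X,C)$ uses that $\psi\colon\Omega^kX\to C$ is killed by $\delta_*$ precisely when it factors through $g$, which is the first six-term sequence with $Z=\Omega^kX$ together with the naturality of the cokernel identification; and exactness at the last two spots is the continuation of that same sequence. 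Consecutive pieces (for $k$ and $k+1$) share their three $\mathbb{E}^{k+1}(X,-)$-terms and the maps between them, hence splice into the full long exact sequence. The second long exact sequence is obtained verbatim dually, replacing $\Omega$, $\mathcal{P}$ and the first six-term sequence by $\Sigma$, $\mathcal{I}$ and the second one, and using $\mathbb{E}^k(-,X)\cong\mathbb{E}(-,\Sigma^{k-1}X)$.

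The delicate point — and the step I expect to cost the most work — is the well-definedness and compatibility of the connecting morphisms: a priori the construction depends on the chosen syzygy $\mathbb{E}$-triangles, so one must verify that $\mathbb{E}^k(X,C)\to\mathbb{E}^{k+1}(X,A)$ is independent of these auxiliary choices (equivalently, that it agrees with Yoneda composition with $\delta\in\mathbb{E}(C,A)$ under the composition pairing on higher extensions) and that it is compatible with the natural isomorphism $\mathbb{E}(\Omega^kX,A)\cong\mathbb{E}(X,\Sigma^kA)$. Both ultimately rest on the independence of $\mathbb{E}^{k}$ from the choices of syzygies and cosyzygies established in \cite{LN}; granting that, the construction above yields exactly the asserted sequences.
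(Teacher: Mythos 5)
The paper does not prove this lemma; it is quoted as a known result of Liu and Nakaoka, with the citation \cite{LN} standing in for the argument. Your dimension-shifting proof --- realizing $\mathbb{E}^k(X,-)$ as $\mathbb{E}(\Omega^{k-1}X,-)\cong\operatorname{coker}\bigl(a^*\colon\Hom(P,-)\to\Hom(\Omega^kX,-)\bigr)$, factoring the Nakaoka--Palu connecting map through this cokernel via $\mathbb{E}(P,A)=0$, checking exactness termwise from the six-term sequences, and dualizing with $\Sigma$ and $\mathcal{I}$ --- is correct and is essentially the argument given in \cite{LN}. Note also that for the statement as written only the existence of some exact sequence is claimed, so the choice-independence of the connecting morphisms that you flag as the delicate point, while worth recording, is not actually needed here.
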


As a higher-dimensional analogue of cluster tilting subcategories in extriangulated categories~\cite[Definition~4.1]{CZZ}, Liu and Nakaoka~\cite[Definition~5.3]{LN} introduced the notion of \emph{$n$-cluster tilting subcategories} in the same context. This generalizes Iyama's definition~\cite[Definition~1.1]{I} in the abelian case.

\begin{definition}\cite[Definition 5.3]{LN}
Let $\mathcal{C}$ be an extriangulated category with enough projectives and injectives. A subcategory $\mathcal{X} \subseteq \mathcal{C}$ is called \emph{$n$-cluster tilting} if the following conditions hold:
\begin{enumerate}
\item[{\rm (1)}] $\mathcal{X}$ is both contravariantly and covariantly finite in $\mathcal{C}$;
\item[{\rm (2)}] $M \in \mathcal{X}$ if and only if $\mathbb{E}^k(\mathcal{X}, M) = 0$ for all $1 \leq k \leq n-1$;
\item[{\rm (3)}] $M \in \mathcal{X}$ if and only if $\mathbb{E}^k(M, \mathcal{X}) = 0$ for all $1 \leq k \leq n-1$.
\end{enumerate}
\end{definition}

Let $\mathcal{X}, \mathcal{Y} \subseteq \mathcal{C}$ be subcategories and $k \geq 1$ an integer. We write $\mathbb{E}^k(\mathcal{X}, \mathcal{Y}) = 0$ to mean that $\mathbb{E}^k(X, Y) = 0$ for all $X \in \mathcal{X}$ and $Y \in \mathcal{Y}$. If either $\mathcal{X} = \{M\}$ or $\mathcal{Y} = \{N\}$ is a singleton, we write $\mathbb{E}^k(M, \mathcal{Y}) = 0$ or $\mathbb{E}^k(\mathcal{X}, N) = 0$ accordingly.

We define the \emph{right $k$-th orthogonal} of $\mathcal{X}$ as
\[
\mathcal{X}^{\perp_k} := \{ N \in \mathcal{C} \mid \mathbb{E}^k(\mathcal{X}, N) = 0 \}.
\]
Dually, the \emph{left $k$-th orthogonal} of $\mathcal{Y}$ is defined by
\[
{}^{\perp_k} \mathcal{Y} := \{ M \in \mathcal{C} \mid \mathbb{E}^k(M, \mathcal{Y}) = 0 \}.
\]

It is easy to verify that a subcategory $\mathcal{X}$ is $n$-cluster tilting if and only if it is contravariantly and covariantly finite in $\mathcal{C}$ and satisfies
\[
\mathcal{X} = \bigcap_{k=1}^{n-1} {}^{\perp_k} \mathcal{X} = \bigcap_{k=1}^{n-1} \mathcal{X}^{\perp_k}.
\]

By the definition of higher extension groups, one readily sees that $\mathbb{E}^k(P, \mathcal{C}) = 0$ for all $k \geq 1$ if $P$ is projective, and $\mathbb{E}^k(\mathcal{C}, I) = 0$ for all $k \geq 1$ if $I$ is injective. It follows that if $\mathcal{X}$ is $n$-cluster tilting, then $\mathcal{P} \subseteq \mathcal{X}$ and $\mathcal{I} \subseteq \mathcal{X}$.

\subsection{Cotorsion pairs in extriangulated categories}
In this subsection, we recall the notion of cotorsion pairs in extriangulated categories.

\begin{definition}\cite[Definition 4.1]{NP}
Let $\mathcal{U}$ and $\mathcal{V}$ be full additive subcategories of $\mathcal{C}$, closed under direct summands and isomorphisms. The pair $(\mathcal{U}, \mathcal{V})$ is called a \emph{cotorsion pair} if the following conditions are satisfied:
\begin{itemize}
  \item[(a)] $\mathbb{E}(\mathcal{U}, \mathcal{V}) = 0$;
  \item[(b)] The pair $(\mathcal{U}, \mathcal{V})$ is \emph{complete} in the sense of~\cite{H}, that is, for every object $C \in \mathcal{C}$, there exist two $\E$-triangles
  \[
  V_C \to U_C \to C\dashrightarrow,\quad C\to V^C \to U^C\dashrightarrow
  \]
  with $U_C, U^C \in \mathcal{U}$ and $V_C, V^C \in \mathcal{V}$.
\end{itemize}
\end{definition}

Since $\mathcal{C}$ has enough projective and injective objects, cotorsion pairs in $\mathcal{C}$ admit an equivalent characterization.

\begin{proposition}\label{1}
Suppose that $\C$ is an extriangulated category with enough projectives and enough injectives, $\U$ and $\V$ are full additive subcategories of $\C$ which are closed under direct summands and isomorphisms. Then $(\U, \V)$ is a cotorsion pair if and only if the following conditions are satisfied.
\begin{itemize}
  \item [\rm (1)]~ $\U={^{\perp_1}\V}$.
  \item [\rm (2)]~ $\V=\U^{\perp_1}$.
  \item [\rm (3)]~ $\U$ is contravariantly finite and $\V$ is covariantly finite in $\C$.
\end{itemize}
\end{proposition}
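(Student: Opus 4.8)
The plan is to establish the two implications separately, using the hypothesis of enough projectives and enough injectives throughout to pass back and forth between $\mathbb{E}(-,-)$ and the (a priori weaker) one-sided orthogonality conditions. First I would prove the forward direction: assume $(\U,\V)$ is a cotorsion pair. Condition (a) immediately gives $\U \subseteq {}^{\perp_1}\V$ and $\V \subseteq \U^{\perp_1}$. For the reverse inclusion in (1), take $M \in {}^{\perp_1}\V$ and use completeness to obtain an $\mathbb{E}$-triangle $V_M \to U_M \to M \dashrightarrow$ with $U_M \in \U$, $V_M \in \V$. Applying the long exact sequence of Lemma~\ref{2} (or, at this level, just the standard exact sequence for $\mathbb{E}$) to this triangle with respect to $V_M$, together with $\mathbb{E}(M,V_M)=0$, one shows the triangle splits, so $M$ is a direct summand of $U_M$; since $\U$ is closed under direct summands, $M \in \U$. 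The argument for (2) is dual, using the second $\mathbb{E}$-triangle in the completeness axiom. Condition (3) is essentially immediate: the triangles $V_C \to U_C \to C \dashrightarrow$ and $C \to V^C \to U^C \dashrightarrow$ exhibit, respectively, a right $\U$-approximation and a left $\V$-approximation of an arbitrary object $C$, so $\U$ is contravariantly finite and $\V$ is covariantly finite.

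For the converse, assume (1), (2), (3) hold. Orthogonality (a) is immediate from (1) (or (2)), since $\U = {}^{\perp_1}\V$ forces $\mathbb{E}(\U,\V)=0$. The work is in producing the two $\mathbb{E}$-triangles of the completeness axiom. Fix $C \in \C$. By contravariant finiteness of $\U$, pick a right $\U$-approximation $U_C \to C$; I would then use that $\C$ has enough projectives — so that $U_C \to C$ can be arranged, after adding a deflation from a projective, to be a deflation — and complete it to an $\mathbb{E}$-triangle $V_C \to U_C \to C \dashrightarrow$. The key point is to verify $V_C \in \V = \U^{\perp_1}$, i.e. $\mathbb{E}(U',V_C)=0$ for all $U' \in \U$: apply $\Hom(U',-)$-type long exact sequence (Lemma~\ref{2}) to the triangle and use that $\Hom(U',U_C) \to \Hom(U',C)$ is surjective (the approximation property) together with $\mathbb{E}(U',U_C)$ vanishing after the projective adjustment, or more precisely run the argument through the connecting map so that surjectivity of $\Hom(U',U_C)\to\Hom(U',C)$ kills $\mathbb{E}(U',V_C)$. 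The second triangle $C \to V^C \to U^C \dashrightarrow$ is constructed dually, starting from a left $\V$-approximation $C \to V^C$, using enough injectives to realize it as an inflation, and checking $U^C \in \U = {}^{\perp_1}\V$.

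The main obstacle I anticipate is the bookkeeping in the converse direction: turning an abstract approximation morphism into an inflation or deflation of an $\mathbb{E}$-triangle requires the enough-projectives/enough-injectives hypothesis and a small diagram chase (replace $U_C \to C$ by $U_C \oplus P \to C$ for a suitable deflation $P \twoheadrightarrow C$, which is still a right $\U$-approximation since $P$ can be taken in $\U$ only if $\mathcal P \subseteq \U$ — which need not hold, so instead one uses that the sum map is a deflation and that the approximation property is preserved). This is the standard Wakamatsu-type argument, and the subtlety is ensuring the approximation property survives the modification and that the connecting homomorphism genuinely yields the desired Ext-vanishing. The orthogonality bookkeeping — consistently reading off vanishing of $\mathbb{E}$ from the long exact sequences in Lemma~\ref{2} in the correct variable — is routine but must be done carefully. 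Everything else (closure under summands, the splitting arguments in the forward direction) is formal.
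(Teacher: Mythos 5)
Your strategy is essentially the paper's own: split the completeness triangles for the forward direction, and for the converse upgrade the approximation maps to deflations/inflations using enough projectives and injectives, then identify the third term of the resulting $\mathbb{E}$-triangle. Your forward direction is correct and complete (indeed more detailed than the paper's ``easy to check''). The converse, however, contains a concrete error. You assert that $\mathcal{P}\subseteq\mathcal{U}$ ``need not hold'' and propose to work around it. In fact it \emph{always} holds under condition (1): for projective $P$ one has $\mathbb{E}(P,\mathcal{C})=0$, hence $\mathbb{E}(P,\mathcal{V})=0$, hence $P\in{}^{\perp_1}\mathcal{V}=\mathcal{U}$, and dually $\mathcal{I}\subseteq\mathcal{V}$. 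This is exactly the observation the paper makes; combined with contravariant finiteness it yields (via \cite[Remark~3.20]{ZZ1}) a right $\mathcal{U}$-approximation of $C$ that is a deflation \emph{with source still in $\mathcal{U}$}. Your workaround does not repair the construction: if $P\notin\mathcal{U}$, then $U_C\oplus P\to C$ is a deflation whose source lies outside $\mathcal{U}$, so the conflation it sits in cannot serve as the completeness triangle $V_C\to U_C\to C\dashrightarrow$ at all. The fix is to prove $\mathcal{P}\subseteq\mathcal{U}$, not to avoid it.

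A second, subtler point, on which the paper's own write-up is equally brief: surjectivity of $\Hom(U',U_C)\to\Hom(U',C)$ only forces the connecting map into $\mathbb{E}(U',V_C)$ to vanish, i.e.\ it gives an injection $\mathbb{E}(U',V_C)\hookrightarrow\mathbb{E}(U',U_C)$; since $\mathbb{E}(\mathcal{U},\mathcal{U})$ need not vanish, this does not by itself ``kill'' $\mathbb{E}(U',V_C)$. Taken literally, your argument would show that the cocone of \emph{any} deflation right $\mathcal{U}$-approximation lies in $\mathcal{V}$, which is false: over a self-injective algebra $\Lambda$ take $\mathcal{U}=\mod\Lambda$ and $\mathcal{V}=\operatorname{proj}\Lambda$; the projection $C\oplus M\to C$ is such an approximation with cocone $M$ arbitrary. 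To conclude $V_C\in\mathcal{U}^{\perp_1}=\mathcal{V}$ one needs a genuine Wakamatsu-type argument, using that $\mathcal{U}={}^{\perp_1}\mathcal{V}$ is extension-closed and choosing the approximation minimal (compare Lemma~\ref{h}), or Salce's trick of producing one completeness triangle from the other by a pushout along a conflation $\Omega C\to P\to C$. This step is not the routine bookkeeping you describe.
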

\begin{proof}
Suppose that $(\U, \V)$ is a cotorsion pair. Then it is easy to check conditions (1), (2) and (3) are satisfied. Conversely, suppose conditions (1), (2) and (3) are satisfied, then $\E(\U, \V) =0$. Moreover, for any project object $P\in\P$, since $\E(P,\C)=0$, we have $\E(P,\V)=0$, then $\P\subseteq{^{\perp_1}\V}=\U$. Similarly, we can prove $\I\subseteq\V$. By \cite[Remark 3.20]{ZZ1}, $\U$ is strongly contravariantly finite and $\V$ is strongly covariantly finite in $\C$. That means for any object $C\in\C$, there exist two $\mathbb{E}$-triangles
  $$X_C\rightarrow U_C\overset{f}\rightarrow C\dashrightarrow,$$
where $f$ is a right $\U$-approximation of $C$, and
$$C\overset{g}\rightarrow V^C\rightarrow Y^C\dashrightarrow,$$
where $g$ is a left $\V$-approximation of $C$. This implies $\E(\U,X_C)=0$ and $\E(Y^C,V)=0$, so $X_C\in\U^{\perp_1}=\V$ and $Y^C\in{^{\perp_1}\V}=\U$. Thus $(\U, \V)$ is a cotorsion pair.
\end{proof}

\begin{definition}
Let $(\U, \V)$ be a cotorsion pair of $\C$. We call $\I(\U)=\U\cap\V$ the $core$ of the cotorsion pair $(\U, \V)$.
\end{definition}
\begin{remark}
Recall that for a subcategory $\X$ of $\C$, $\X$ is called an $n$-rigid subcategory if $\E^k(\X,\X)=0$ for $1\leq k\leq n-1$. For a cotorsion pair $(\U, \V)$, the core $\I(\U)$ is always $2$-rigid. $(\U, \U)$ is a cotorsion pair if and only if $\U$ is $2$-cluster tilting.
\end{remark}

\subsection{Mutation pairs of extriangulated categories}
In this subsection, we review from \cite{ZZ1} the notion of mutation pairs of subcategories in an extriangulated category.

\begin{definition}\cite[Definition 3.2]{ZZ1}\label{3}
Let $\X$, $\U$ and $\V$ be subcategories of an extriangulated category $\C$, and $\X\subseteq\U$, $\X\subseteq\V$. The pair $(\U, \V)$ is called an $\X$-\emph{mutation pair} if it satisfies
\begin{itemize}
  \item [(1)] For any $U\in\U$, there exists an $\E$-triangle
  $$\xymatrix{U\ar[r]^{f}& X\ar[r]^{g}&V\ar@{-->}[r]^{\delta}&}$$
  where $V\in\V$, and $f$ is a left $\X$-approximation of $U$, $g$ is a right $\X$-approximation of $V$.
  \item [(2)] For any $V\in\V$, there exists an $\E$-triangle
  $$\xymatrix{U\ar[r]^{f}& X\ar[r]^{g}&V\ar@{-->}[r]^{\delta}&}$$
  where $U\in\U$, and $f$ is a left $\X$-approximation of $U$, $g$ is a right $\X$-approximation of $V$.
\end{itemize}
\end{definition}

\vspace{2mm}

\section{Mutation of $n$-cotorsion pairs in extriangulated categories}
In this section, we fix an integer $n\geq 1$.

\subsection{$n$-cotorsion pairs in extriangulated categories}

We define $n$-cotorsion pairs in  an extriangulated category, which is a generalization of the cotorsion pairs in the sense of Nakaoka and Palu \cite{NP}.
\begin{definition}\label{aaa}
Let $\C$ be an extriangulated category and $\U,\V$ be two subcategories of $\C$ which are closed
under direct summands and isomorphisms. The pair $(\U,\V)$ is called an $n$-\emph{cotorsion pair} if the following conditions hold.
\begin{enumerate}
  \item [(1)] $\U=\bigcap\limits_{k=1}^{n}{^{\perp_k}}\V$~~ for all $1\leq k\leq n$;
  \item [(2)] $\V=\bigcap\limits_{k=1}^{n}\U^{\perp_k}$~~ for all $1\leq k\leq n$;
   \item [(3)] $\U$ is contravariantly finite and $\V$ is covariantly finite.
\end{enumerate}
\end{definition}

\begin{remark}
By Proposition \ref{1}, the concept of $1$-cotorsion pair is compatible with the classical definition of a cotorsion pair in the sense of Nakaoka and Palu \cite{NP}. It is easy to check that the pair $(\U,\U)$ is an $n$-cotorsion pair in $\C$ if and only if $\U$ is $(n+1)$-cluster tilting.
\end{remark}
We note that the concept of $n$-cotorsion pair defined in this paper is a  weaker version than that in \cite{HZ} by He and Zhou. When $\C$ is an abelian category, an $n$-cotorsion pair defined in \cite{HMP} can imply an $n$-cotorsion pair in the sense of Definition \ref{aaa}. When $\C$ is a triangulated category, an $n$-cotorsion pair in the sense of Definition \ref{aaa} is compatible with Chang and Zhou defined in \cite{CZ}. There are a list of detailed examples of $n$-cotorsion pairs.

\begin{example}
\begin{enumerate}
 \item[(1)] Let $\C$ be an abelian category and $\A,\B$ be two classes of objects of $\C$ which are closed
under direct summands and isomorphisms. In \cite{HMP}, they defined the concept of left $n$-cotorsion pair $(\A,\B)$ if $\Ext^i(\A,\B)=0$ for every $1\leq i\leq n$ and for every object $C\in\C$, there exists an exact sequence
$$0\rightarrow B_{n-1}\rightarrow B_{n-2}\rightarrow\cdots\rightarrow B_1\rightarrow B_0\rightarrow A\rightarrow C\rightarrow 0$$
with $A\in\A$ and $B_k\in\B$ for every $0\leq k\leq n-1$. Dually, a right $n$-cotorsion pair was defined. The pair $(\A,\B)$ is called an $n$-cotorsion pair in the sense of \cite{HMP} if it is both a left $n$-cotorsion pair and a right $n$-cotorsion pair.

By Theorem 2.7, Proposition 3.1 and their dual in \cite{HMP}, if  $(\A,\B)$ is an $n$-cotorsion pair in the sense of \cite{HMP}, then $(\A,\B)$ is an $n$-cotorsion pair in the sense of Definition \ref{aaa}.

By Proposition 5.8 in \cite{HMP}, suppose $R$ is a Von Neumann regular ring, we denote the classes of Ding projective $R$-modules by $\D\P(R)$, and denote the class of flat $R$-modules by $\F(R)$. Then $(\D\P(R),\F(R))$ is an $n$-cotorsion pair in Mod($R$).
  \item [(2)] Let $\C$ be a triangulated category. Then Definition \ref{aaa} is compatible with Definition 3.1 in \cite{CZ}. By Example 3.5 in \cite{CZ}, let $\C_{A_3}^2$ be the 2-cluster category of type $A_3$. The Auslander-Reiten quiver  of $\C_{A_3}^2$ is shown in Figure \ref{Auslander-Reiten quiver }.
      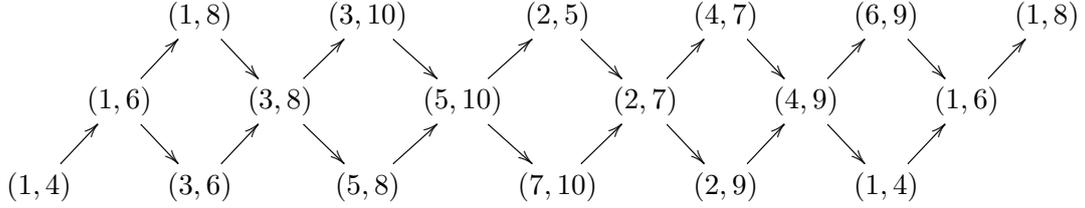
\begin{figure}[h]
\centering

$$
{
\xymatrix@-7mm@C-0.17cm{
     &&(1,8)  \ar[rdd] & & (3,10) \ar[rdd] && (2,5) \ar[rdd]&& (4,7) \ar[rdd]&& (6,9)\ar[rdd] && (1,8) \\
 \\
 & (1,6)  \ar[rdd] \ar[ruu] & & (3,8) \ar[rdd]\ar[ruu] & & (5,10) \ar[rdd] \ar[ruu] & & (2,7) \ar[ruu]\ar[rdd]&& (4,9) \ar[ruu]\ar[rdd]&&(1,6) \ar[ruu]\\
 \\
  (1,4) \ar[ruu]&& (3,6) \ar[ruu] && (5,8) \ar[ruu]&&(7,10) \ar[ruu]&& (2,9) \ar[ruu]&& (1,4)\ar[ruu] && \\
}
}
$$
\caption{The Auslander-Reiten quiver  of $\C_{A_{3}}^{2}$}
\label{Auslander-Reiten quiver }
\end{figure}
Let $\X$ be a subcategory generated by $(1,4),(1,8)$ and $(3,8)$, and $\Y$ be a subcategory generated by $(1,8),(5,8)$, and $(4,7)$. Then $(\X,\Y)$ is a $2$-cotorsion pair.
\end{enumerate}
\end{example}

\begin{example}
{\upshape 	 Let $A=kQ/I$ be an algebra given by the quiver
	 \begin{align}
	 	\begin{minipage}{0.6\hsize}
	   \xymatrix{Q\colon ~ \begin{smallmatrix}1\end{smallmatrix}\ar[r]^{\alpha_1}&\begin{smallmatrix}2\end{smallmatrix}\ar[r]^{\alpha_2}
	 			&\begin{smallmatrix}3\end{smallmatrix}\ar[r]^{\alpha_3}&\begin{smallmatrix}4\end{smallmatrix}\ar[r]^{\alpha_4}&\begin{smallmatrix}5\end{smallmatrix}\ar[r]^{\alpha_5}&\begin{smallmatrix}6\end{smallmatrix}\ar[r]^{\alpha_6}&\begin{smallmatrix}7\end{smallmatrix}\ar[r]^{\alpha_7}&\begin{smallmatrix}8\end{smallmatrix}\ar[r]^{\alpha_8}&\begin{smallmatrix}9\end{smallmatrix}}\notag
	 	\end{minipage}
	 \end{align}
and $I=\langle  \alpha_{i}\alpha_{i+1}\alpha_{i+2}\alpha_{i+3}  \rangle,~i=1,2,3,4,5.$	By \cite[Proposition 3.30]{NP}, the ideal quotient category $\C:=(\mod A)/(\mathcal P\cap \mathcal I)$ (where $\mathcal P$ is the subcategory of projectives and $\mathcal I$ is the subcategory of injectives) is an extriangulated category. In this example, we denote by ``$\bullet$" in the Auslander-Reiten quiver of the $\C$ objects belong to a subcategory and
by  ``$\circ $" the objects do not.}
\begin{align}
	\tiny{\xymatrix @R=4mm @C2mm{
			\mathcal M:&&{\begin{smallmatrix}\bullet\end{smallmatrix}}\ar[rd]&&
			{\begin{smallmatrix}\circ\end{smallmatrix}}\ar[rd]\ar@{.}[ll]&&
			{\begin{smallmatrix}\circ\end{smallmatrix}}\ar[rd]\ar@{.}[ll]&&
			{\begin{smallmatrix}\circ\end{smallmatrix}}\ar[rd]\ar@{.}[ll]&&
			{\begin{smallmatrix}\circ\end{smallmatrix}}\ar[rd]\ar@{.}[ll]&&
			{\begin{smallmatrix}\circ\end{smallmatrix}}\ar[rd]\ar@{.}[ll]&&
			{\begin{smallmatrix}\bullet\end{smallmatrix}}\ar[rd]\ar@{.}[ll]&&\\
			&{\begin{smallmatrix}\bullet\end{smallmatrix}}\ar[ru]\ar[rd]&&
			{\begin{smallmatrix}\circ\end{smallmatrix}}\ar[ru]\ar[rd]\ar@{.}[ll]&&
			{\begin{smallmatrix}\circ\end{smallmatrix}}\ar[ru]\ar[rd]\ar@{.}[ll]&&
			{\begin{smallmatrix}\circ\end{smallmatrix}}\ar[ru]\ar[rd]\ar@{.}[ll]&&
			{\begin{smallmatrix}\circ\end{smallmatrix}}\ar[ru]\ar[rd]\ar@{.}[ll]&&
			{\begin{smallmatrix}\circ\end{smallmatrix}}\ar[ru]\ar[rd]\ar@{.}[ll]&&
			{\begin{smallmatrix}\circ\end{smallmatrix}}\ar[ru]\ar[rd]\ar@{.}[ll]&&
			{\begin{smallmatrix}\bullet\end{smallmatrix}}\ar[rd]\ar@{.}[ll]\\
			{\begin{smallmatrix}\bullet\end{smallmatrix}}\ar[ru]&&\textnormal{$\begin{smallmatrix}\circ\end{smallmatrix}$}\ar[ru]\ar@{.}[ll]&&
			{\begin{smallmatrix}\circ\end{smallmatrix}}\ar[ru]\ar@{.}[ll]&&\textnormal{$\begin{smallmatrix}\circ\end{smallmatrix}$}\ar[ru]\ar@{.}[ll]&&
			{\begin{smallmatrix}\circ\end{smallmatrix}}\ar[ru]\ar@{.}[ll]&&
			{\begin{smallmatrix}\circ\end{smallmatrix}}\ar[ru]\ar@{.}[ll]&&
			{\begin{smallmatrix}\circ\end{smallmatrix}}\ar[ru]\ar@{.}[ll]&&
			{\begin{smallmatrix}\circ\end{smallmatrix}}\ar[ru]\ar@{.}[ll]&&
			{\begin{smallmatrix}\bullet\end{smallmatrix}}\ar@{.}[ll]\\
	}}\notag
\end{align}
$\mathcal M$ is a $4$-cluster tilting subcategory of $\C$, hence $(\mathcal M,\mathcal M)$ is a $3$-cotorsion pair. We give two examples of $3$-cotorsion pairs $(\U_1,\V_1)$ and $(\U_2,V_2)$ in $\C$, where
\begin{align}
	\tiny{\xymatrix @R=4mm @C2mm{
			\mathcal U_1:&&{\begin{smallmatrix}\bullet\end{smallmatrix}}\ar[rd]&&
			{\begin{smallmatrix}\circ\end{smallmatrix}}\ar[rd]\ar@{.}[ll]&&
			{\begin{smallmatrix}\bullet\end{smallmatrix}}\ar[rd]\ar@{.}[ll]&&
			{\begin{smallmatrix}\circ\end{smallmatrix}}\ar[rd]\ar@{.}[ll]&&
			{\begin{smallmatrix}\circ\end{smallmatrix}}\ar[rd]\ar@{.}[ll]&&
			{\begin{smallmatrix}\circ\end{smallmatrix}}\ar[rd]\ar@{.}[ll]&&
			{\begin{smallmatrix}\bullet\end{smallmatrix}}\ar[rd]\ar@{.}[ll]&&\\
			&{\begin{smallmatrix}\bullet\end{smallmatrix}}\ar[ru]\ar[rd]&&
			{\begin{smallmatrix}\circ\end{smallmatrix}}\ar[ru]\ar[rd]\ar@{.}[ll]&&
			{\begin{smallmatrix}\circ\end{smallmatrix}}\ar[ru]\ar[rd]\ar@{.}[ll]&&
			{\begin{smallmatrix}\circ\end{smallmatrix}}\ar[ru]\ar[rd]\ar@{.}[ll]&&
			{\begin{smallmatrix}\circ\end{smallmatrix}}\ar[ru]\ar[rd]\ar@{.}[ll]&&
			{\begin{smallmatrix}\circ\end{smallmatrix}}\ar[ru]\ar[rd]\ar@{.}[ll]&&
			{\begin{smallmatrix}\circ\end{smallmatrix}}\ar[ru]\ar[rd]\ar@{.}[ll]&&
			{\begin{smallmatrix}\bullet\end{smallmatrix}}\ar[rd]\ar@{.}[ll]\\
			{\begin{smallmatrix}\bullet\end{smallmatrix}}\ar[ru]&&\textnormal{$\begin{smallmatrix}\bullet\end{smallmatrix}$}\ar[ru]\ar@{.}[ll]&&
			{\begin{smallmatrix}\circ\end{smallmatrix}}\ar[ru]\ar@{.}[ll]&&\textnormal{$\begin{smallmatrix}\circ\end{smallmatrix}$}\ar[ru]\ar@{.}[ll]&&
			{\begin{smallmatrix}\circ\end{smallmatrix}}\ar[ru]\ar@{.}[ll]&&
			{\begin{smallmatrix}\bullet\end{smallmatrix}}\ar[ru]\ar@{.}[ll]&&
			{\begin{smallmatrix}\circ\end{smallmatrix}}\ar[ru]\ar@{.}[ll]&&
			{\begin{smallmatrix}\circ\end{smallmatrix}}\ar[ru]\ar@{.}[ll]&&
			{\begin{smallmatrix}\bullet\end{smallmatrix}}\ar@{.}[ll]\\
	}}\notag
\end{align}
\begin{align}
	\tiny{\xymatrix @R=4mm @C2mm{
			\mathcal V_1:&&{\begin{smallmatrix}\bullet\end{smallmatrix}}\ar[rd]&&
			{\begin{smallmatrix}\circ\end{smallmatrix}}\ar[rd]\ar@{.}[ll]&&
			{\begin{smallmatrix}\circ\end{smallmatrix}}\ar[rd]\ar@{.}[ll]&&
			{\begin{smallmatrix}\circ\end{smallmatrix}}\ar[rd]\ar@{.}[ll]&&
			{\begin{smallmatrix}\circ\end{smallmatrix}}\ar[rd]\ar@{.}[ll]&&
			{\begin{smallmatrix}\circ\end{smallmatrix}}\ar[rd]\ar@{.}[ll]&&
			{\begin{smallmatrix}\bullet\end{smallmatrix}}\ar[rd]\ar@{.}[ll]&&\\
			&{\begin{smallmatrix}\bullet\end{smallmatrix}}\ar[ru]\ar[rd]&&
			{\begin{smallmatrix}\circ\end{smallmatrix}}\ar[ru]\ar[rd]\ar@{.}[ll]&&
			{\begin{smallmatrix}\circ\end{smallmatrix}}\ar[ru]\ar[rd]\ar@{.}[ll]&&
			{\begin{smallmatrix}\circ\end{smallmatrix}}\ar[ru]\ar[rd]\ar@{.}[ll]&&
			{\begin{smallmatrix}\circ\end{smallmatrix}}\ar[ru]\ar[rd]\ar@{.}[ll]&&
			{\begin{smallmatrix}\circ\end{smallmatrix}}\ar[ru]\ar[rd]\ar@{.}[ll]&&
			{\begin{smallmatrix}\circ\end{smallmatrix}}\ar[ru]\ar[rd]\ar@{.}[ll]&&
			{\begin{smallmatrix}\bullet\end{smallmatrix}}\ar[rd]\ar@{.}[ll]\\
			{\begin{smallmatrix}\circ\end{smallmatrix}}\ar[ru]&&\textnormal{$\begin{smallmatrix}\circ\end{smallmatrix}$}\ar[ru]\ar@{.}[ll]&&
			{\begin{smallmatrix}\circ\end{smallmatrix}}\ar[ru]\ar@{.}[ll]&&\textnormal{$\begin{smallmatrix}\circ\end{smallmatrix}$}\ar[ru]\ar@{.}[ll]&&
			{\begin{smallmatrix}\circ\end{smallmatrix}}\ar[ru]\ar@{.}[ll]&&
			{\begin{smallmatrix}\circ\end{smallmatrix}}\ar[ru]\ar@{.}[ll]&&
			{\begin{smallmatrix}\circ\end{smallmatrix}}\ar[ru]\ar@{.}[ll]&&
			{\begin{smallmatrix}\circ\end{smallmatrix}}\ar[ru]\ar@{.}[ll]&&
			{\begin{smallmatrix}\bullet\end{smallmatrix}}\ar@{.}[ll]\\
	}}\notag
\end{align}
\begin{align}
	\tiny{\xymatrix @R=4mm @C2mm{
			\mathcal U_2:&&{\begin{smallmatrix}\bullet\end{smallmatrix}}\ar[rd]&&
			{\begin{smallmatrix}\circ\end{smallmatrix}}\ar[rd]\ar@{.}[ll]&&
			{\begin{smallmatrix}\circ\end{smallmatrix}}\ar[rd]\ar@{.}[ll]&&
			{\begin{smallmatrix}\circ\end{smallmatrix}}\ar[rd]\ar@{.}[ll]&&
			{\begin{smallmatrix}\circ\end{smallmatrix}}\ar[rd]\ar@{.}[ll]&&
			{\begin{smallmatrix}\circ\end{smallmatrix}}\ar[rd]\ar@{.}[ll]&&
			{\begin{smallmatrix}\circ\end{smallmatrix}}\ar[rd]\ar@{.}[ll]&&\\
			&{\begin{smallmatrix}\bullet\end{smallmatrix}}\ar[ru]\ar[rd]&&
			{\begin{smallmatrix}\circ\end{smallmatrix}}\ar[ru]\ar[rd]\ar@{.}[ll]&&
			{\begin{smallmatrix}\circ\end{smallmatrix}}\ar[ru]\ar[rd]\ar@{.}[ll]&&
			{\begin{smallmatrix}\circ\end{smallmatrix}}\ar[ru]\ar[rd]\ar@{.}[ll]&&
			{\begin{smallmatrix}\circ\end{smallmatrix}}\ar[ru]\ar[rd]\ar@{.}[ll]&&
			{\begin{smallmatrix}\circ\end{smallmatrix}}\ar[ru]\ar[rd]\ar@{.}[ll]&&
			{\begin{smallmatrix}\circ\end{smallmatrix}}\ar[ru]\ar[rd]\ar@{.}[ll]&&
			{\begin{smallmatrix}\bullet\end{smallmatrix}}\ar[rd]\ar@{.}[ll]\\
			{\begin{smallmatrix}\bullet\end{smallmatrix}}\ar[ru]&&\textnormal{$\begin{smallmatrix}\bullet\end{smallmatrix}$}\ar[ru]\ar@{.}[ll]&&
			{\begin{smallmatrix}\circ\end{smallmatrix}}\ar[ru]\ar@{.}[ll]&&\textnormal{$\begin{smallmatrix}\circ\end{smallmatrix}$}\ar[ru]\ar@{.}[ll]&&
			{\begin{smallmatrix}\circ\end{smallmatrix}}\ar[ru]\ar@{.}[ll]&&
			{\begin{smallmatrix}\circ\end{smallmatrix}}\ar[ru]\ar@{.}[ll]&&
			{\begin{smallmatrix}\circ\end{smallmatrix}}\ar[ru]\ar@{.}[ll]&&
			{\begin{smallmatrix}\circ\end{smallmatrix}}\ar[ru]\ar@{.}[ll]&&
			{\begin{smallmatrix}\bullet\end{smallmatrix}}\ar@{.}[ll]\\
	}}\notag
\end{align}
\begin{align}
	\tiny{\xymatrix @R=4mm @C2mm{
			\mathcal V_2:&&{\begin{smallmatrix}\bullet\end{smallmatrix}}\ar[rd]&&
			{\begin{smallmatrix}\circ\end{smallmatrix}}\ar[rd]\ar@{.}[ll]&&
			{\begin{smallmatrix}\bullet\end{smallmatrix}}\ar[rd]\ar@{.}[ll]&&
			{\begin{smallmatrix}\circ\end{smallmatrix}}\ar[rd]\ar@{.}[ll]&&
			{\begin{smallmatrix}\circ\end{smallmatrix}}\ar[rd]\ar@{.}[ll]&&
			{\begin{smallmatrix}\circ\end{smallmatrix}}\ar[rd]\ar@{.}[ll]&&
			{\begin{smallmatrix}\bullet\end{smallmatrix}}\ar[rd]\ar@{.}[ll]&&\\
			&{\begin{smallmatrix}\bullet\end{smallmatrix}}\ar[ru]\ar[rd]&&
			{\begin{smallmatrix}\circ\end{smallmatrix}}\ar[ru]\ar[rd]\ar@{.}[ll]&&
			{\begin{smallmatrix}\circ\end{smallmatrix}}\ar[ru]\ar[rd]\ar@{.}[ll]&&
			{\begin{smallmatrix}\circ\end{smallmatrix}}\ar[ru]\ar[rd]\ar@{.}[ll]&&
			{\begin{smallmatrix}\circ\end{smallmatrix}}\ar[ru]\ar[rd]\ar@{.}[ll]&&
			{\begin{smallmatrix}\circ\end{smallmatrix}}\ar[ru]\ar[rd]\ar@{.}[ll]&&
			{\begin{smallmatrix}\circ\end{smallmatrix}}\ar[ru]\ar[rd]\ar@{.}[ll]&&
			{\begin{smallmatrix}\bullet\end{smallmatrix}}\ar[rd]\ar@{.}[ll]\\
			{\begin{smallmatrix}\circ\end{smallmatrix}}\ar[ru]&&\textnormal{$\begin{smallmatrix}\bullet\end{smallmatrix}$}\ar[ru]\ar@{.}[ll]&&
			{\begin{smallmatrix}\circ\end{smallmatrix}}\ar[ru]\ar@{.}[ll]&&\textnormal{$\begin{smallmatrix}\circ\end{smallmatrix}$}\ar[ru]\ar@{.}[ll]&&
			{\begin{smallmatrix}\circ\end{smallmatrix}}\ar[ru]\ar@{.}[ll]&&
			{\begin{smallmatrix}\bullet\end{smallmatrix}}\ar[ru]\ar@{.}[ll]&&
			{\begin{smallmatrix}\circ\end{smallmatrix}}\ar[ru]\ar@{.}[ll]&&
			{\begin{smallmatrix}\circ\end{smallmatrix}}\ar[ru]\ar@{.}[ll]&&
			{\begin{smallmatrix}\bullet\end{smallmatrix}}\ar@{.}[ll]\\
	}}\notag
\end{align}

\end{example}

\begin{definition}
Let $(\X,\Y)$ be an $n$-cotorsion pair in $\C$. We call $\I(\X)=\X\cap\Y$ the $core$ of $(\X,\Y)$.
\end{definition}

\begin{remark}\label{l}
By the definition of $n$-cotorsion pair, it is easy to see the core $\I(\X)$ is an $(n+1)$-rigid subcategory.
\end{remark}

\subsection{Compatibility with subfactor triangulated categories}
In this subsection, we fix a functorially finite $(n+1)$-rigid subcategory $\D$ of $\C$ satisfying the condition
\[
\bigcap\limits_{k=1}^{n}{}^{\perp_k}\D = \bigcap\limits_{k=1}^{n}\D^{\perp_k},
\]
and denote this common subcategory by $\Z$. The quotient category $\mathfrak{U} := \Z/\D$ is called a \emph{subfactor triangulated category}, and it is defined as follows:

\begin{itemize}
  \item[(1)] The objects of $\mathfrak{U}$ are the same as those in $\Z$.
  \item[(2)] For each $X,Y \in \Z$, the morphism space is defined by
  \[
  \Hom_{\mathfrak{U}}(X,Y) := \Hom_{\Z}(X,Y)/\D(X,Y),
  \]
  where $\D(X,Y)$ denotes the subgroup of morphisms in $\Hom_{\Z}(X,Y)$ that factor through some object in $\D$.
\end{itemize}

It is easy to verify that the subcategory $\Z$ satisfies the following properties:

\begin{itemize}
  \item[(1)] $\Z$ is extension-closed in $\C$; that is, for any $\E$-triangle in $\C$
  \[
  \xymatrix{A\ar[r]^{f}& B\ar[r]^{g}& C\ar@{-->}[r]^{\delta}&,}
  \]
  if $A,C \in \Z$, then $B \in \Z$ as well.

  \item[(2)] The pair $(\Z, \Z)$ forms a $\D$-mutation pair.
\end{itemize}

It is shown in \cite[Theorem 3.13]{ZZ1} that $\mathfrak{U}$ admits a triangulated structure, where the shift functor is denoted by $\langle1\rangle$, and triangles are defined as follows:

\begin{itemize}
  \item[$\bullet$] Since $(\Z, \Z)$ is a $\D$-mutation pair, for each object $X \in \mathfrak{U}$, we may choose an $\E$-triangle
  \[
  \xymatrix{X \ar[r]^{f} & D \ar[r]^{g} & X' \ar@{-->}[r]^{\delta} &,}
  \]
  where $D \in \D$. The shift of $X$ in $\mathfrak{U}$ is defined to be $X'$, and we denote it by $X\langle1\rangle$.

  \item[$\bullet$] Given an $\E$-triangle in $\C$
  \[
  \xymatrix{X \ar[r]^{a} & Y \ar[r]^{b} & Z \ar@{-->}[r]^{\delta'} &,}
  \]
  with $X,Y,Z \in \Z$, since $\E(Z, D) = 0$ for all $D \in \D$, there exists a morphism $d\colon Z \rightarrow X\langle1\rangle$ making the following diagram commute:
  \[
  \xymatrix@C=1cm{
    X \ar@{=}[d] \ar[r]^{a} & Y \ar[d] \ar[r]^{b} & Z \ar[d]^{d} \ar@{-->}[r]^{\delta'} & \\
    X \ar[r]^{f} & D \ar[r]^{g} & X\langle1\rangle \ar@{-->}[r]^{\delta} &.
  }
  \]
  This gives rise to a complex in $\mathfrak{U}$:
  \[
  X \xrightarrow{\bar{a}} Y \xrightarrow{\bar{b}} Z \xrightarrow{\bar{d}} X\langle1\rangle,
  \]
  and the triangles in $\mathfrak{U}$ are defined as those complexes obtained in this way.
\end{itemize}
\vspace{2mm}

Let $\langle i \rangle$ denote the $i$-th shift functor in the triangulated category $\mathfrak{U}$. The following result will be used.

\begin{lemma}\label{c}
Let $\D$ be a functorially finite $(n+1)$-rigid subcategory of $\C$ satisfying the condition
\[
\bigcap\limits_{k=1}^{n}{}^{\perp_k}\D = \bigcap\limits_{k=1}^{n}\D^{\perp_k}.
\]
Then for any $X, Y \in \Z$, there exists an isomorphism
\[
\mathfrak{U}(X, Y\langle i \rangle) \cong \E^i(X, Y)
\]
for all integers $1 \leq i \leq n$.
\end{lemma}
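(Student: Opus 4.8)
The plan is to prove the isomorphism $\mathfrak{U}(X,Y\langle i\rangle)\cong\E^i(X,Y)$ by induction on $i$, using the explicit construction of the shift functor $\langle1\rangle$ and the triangulated structure on $\mathfrak{U}=\Z/\D$ recalled above, together with the long exact sequences of Lemma \ref{2} and the defining vanishing properties of $\Z$.

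For the base case $i=1$, I would argue directly. Fix $X\in\Z$ and choose the defining $\E$-triangle $X\xrightarrow{f} D\xrightarrow{g} X\langle1\rangle\dashrightarrow$ with $D\in\D$. Applying $\Hom_\C(Y,-)$ (for $Y\in\Z$) and using that $\D$ is $(n+1)$-rigid together with $Y\in\Z=\bigcap_{k=1}^n{}^{\perp_k}\D$, the long exact sequence of Lemma \ref{2} gives $\E^k(Y,D)=0$ for $1\le k\le n$, hence the connecting map yields $\E^1(Y,X\langle1\rangle)\cong\E^2(Y,X)$ and, more importantly for the base case, an exact sequence
\[
\Hom_\C(Y,D)\to\Hom_\C(Y,X\langle1\rangle)\to\E^1(Y,X)\to 0.
\]
The image of $\Hom_\C(Y,D)$ in $\Hom_\C(Y,X\langle1\rangle)$ is exactly $\D(Y,X\langle1\rangle)$ (morphisms factoring through $D$, and one checks any morphism factoring through an arbitrary object of $\D$ can be rerouted through this particular $D$ using rigidity), so passing to the quotient gives $\mathfrak{U}(Y,X\langle1\rangle)\cong\E^1(Y,X)$. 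By a symmetric/dual argument one obtains the statement in the first variable as well; here one should take care that $\Z$ being closed under both $^{\perp_k}\D$ and $\D^{\perp_k}$ is what makes the two-sided computation go through. This matches the known $n=1$ computation in \cite{ZZ1} and is essentially their Lemma for cotorsion data specialized to $\D$-mutation pairs.

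For the inductive step, suppose $1\le i<n$ and the isomorphism holds for $i$. I would use the triangle $X\to D\to X\langle1\rangle\dashrightarrow$ again: in $\mathfrak{U}$ the object $D$ becomes zero, so the triangle $X\to 0\to X\langle1\rangle\to X\langle1\rangle$ is a distinguished triangle, giving $\mathfrak{U}(X\langle1\rangle,Y\langle j\rangle)\cong\mathfrak{U}(X,Y\langle j-1\rangle)$ for all $j$ — equivalently $Y\langle i\rangle=(Y\langle i-1\rangle)\langle1\rangle$ and one shifts. Concretely, apply $\E^{i}(-,Y)$-type reasoning: from $X\to D\to X\langle1\rangle\dashrightarrow$ with $D\in\D$ and the vanishing $\E^k(\Z,\D)=\E^k(\D,\Z)=0$ for $1\le k\le n$ (from the definition of $\Z$ and $(n+1)$-rigidity of $\D$), Lemma \ref{2} gives $\E^{i}(X\langle1\rangle,Y)\cong\E^{i+1}(X,Y)$ for $1\le i\le n-1$, and likewise $\mathfrak{U}(X\langle1\rangle, Y\langle i\rangle)\cong\mathfrak{U}(X,Y\langle i-1\rangle)$ from the triangulated structure (the cone of $X\to 0$ is $X\langle1\rangle$). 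Combining: $\mathfrak{U}(X,Y\langle i+1\rangle)\cong\mathfrak{U}(X\langle1\rangle,Y\langle i\rangle\langle1\rangle)\cong\mathfrak{U}(X\langle1\rangle,Z\langle i\rangle)$ where one picks a $\Z$-representative $Z$ of $Y\langle1\rangle$ via $Y\to D'\to Z\dashrightarrow$; by the induction hypothesis this is $\E^i(X\langle1\rangle,Z)$, which by the long exact sequence for $Y\to D'\to Z\dashrightarrow$ (and $\E^k(X\langle1\rangle,D')=0$) equals $\E^{i+1}(X\langle1\rangle,Y)$, and finally $\E^{i+1}(X\langle1\rangle,Y)\cong\E^{i+2}(X,Y)$...

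Here I need to be careful with the bookkeeping of degrees: the cleanest route is to shift only one variable at a time and verify that each dimension shift stays within the range $1\le k\le n$ where all the relevant $\E^k(-,\D)$ and $\E^k(\D,-)$ groups vanish — this is precisely the reason the statement is restricted to $i\le n$. The main obstacle I anticipate is not any single hard argument but rather organizing the induction so that the shift isomorphisms in $\mathfrak{U}$ (which come from the triangulated structure and the fact that objects of $\D$ vanish in $\mathfrak{U}$) are correctly matched degree-by-degree with the $\E$-degree shifts coming from Lemma \ref{2}, and checking naturality well enough that the composite isomorphism is canonical (independent of the choices of $D$-triangles). I would handle the naturality by noting that any two choices of defining $\E$-triangle for $X\langle1\rangle$ are connected by a morphism of $\E$-triangles which is an isomorphism in $\mathfrak{U}$, and that the long exact sequences of Lemma \ref{2} are natural in the triangle.
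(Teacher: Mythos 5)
Your base case is correct and is exactly the mechanism the paper uses: from the approximation triangle $Y\langle i-1\rangle\xrightarrow{f_i}D_i\xrightarrow{g_i}Y\langle i\rangle\dashrightarrow$ one gets the exact sequence $\Hom_\C(X,D_i)\xrightarrow{(g_i)_*}\Hom_\C(X,Y\langle i\rangle)\to\E^1(X,Y\langle i-1\rangle)\to\E^1(X,D_i)=0$, and since $g_i$ is a right $\D$-approximation the image of $(g_i)_*$ is $[\D](X,Y\langle i\rangle)$, whence $\mathfrak{U}(X,Y\langle i\rangle)\cong\E^1(X,Y\langle i-1\rangle)$.

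The inductive step as you have written it, however, does not go through, and the problem is concrete rather than cosmetic. First, the dimension shift you use when suspending the first variable is in the wrong direction: applying $\E^*(-,Y)$ to $X\to D\to X\langle1\rangle$ with $\E^k(D,Y)=0$ gives $\E^{k+1}(X\langle1\rangle,Y)\cong\E^{k}(X,Y)$, i.e.\ $\E^{i}(X\langle1\rangle,Y)\cong\E^{i-1}(X,Y)$, not $\E^{i+1}(X,Y)$ as you claim (here $X\langle1\rangle$ behaves like a suspension, not a syzygy). Second, the first link of your chain drops a degree: $\mathfrak{U}(A,B)\cong\mathfrak{U}(A\langle1\rangle,B\langle1\rangle)$ applied to $B=Y\langle i\rangle$ identifies $\mathfrak{U}(X,Y\langle i\rangle)$, not $\mathfrak{U}(X,Y\langle i+1\rangle)$, with $\mathfrak{U}(X\langle1\rangle,Y\langle i+1\rangle)$. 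Tracing your chain with the corrected isomorphisms lands on $\E^{i}(X,Y)$ instead of the desired $\E^{i+1}(X,Y)$, and your write-up in fact ends in an unfinished ``$\cdots$''. The repair you yourself gesture at --- shift only the second variable --- is precisely the paper's proof: combine the base-case isomorphism with $\E^{k}(X,Y\langle j\rangle)\cong\E^{k+1}(X,Y\langle j-1\rangle)$ (valid for $1\le k\le n-1$ because $\E^k(X,D_j)=0$ for $1\le k\le n$) to obtain $\mathfrak{U}(X,Y\langle i\rangle)\cong\E^1(X,Y\langle i-1\rangle)\cong\E^2(X,Y\langle i-2\rangle)\cong\cdots\cong\E^i(X,Y)$, with no need to suspend $X$ or to invoke the triangulated structure of $\mathfrak{U}$ at all. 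I recommend you discard the two-variable shifting and write out this one-variable chain.
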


\begin{proof}
Since $(\Z, \Z)$ is a $\D$-mutation pair, for any object $Y\in\mathfrak{U}$, take a chain of $\E$-triangles
    $$\begin{array}{r}
     \xymatrix{Y\ar[r]^{f_1}& D_1\ar[r]^{g_1\;\;}&Y\langle1\rangle\ar@{-->}[r]&,}\\
     \xymatrix{Y\langle1\rangle\ar[r]^{f_2}& D_2\ar[r]^{g_2\;\;}&Y\langle2\rangle\ar@{-->}[r]&,}\\
\vdots\hspace{3.6cm}\\
    \xymatrix{
     Y\langle n-1\rangle\ar[r]^{\quad f_n}& D_n\ar[r]^{g_n\;\;}&Y\langle n\rangle\ar@{-->}[r]&,
     }
     \end{array}$$
where $D_i\in\D$, $f_i$ is a left $\D$-approximation and $g_i$ is a right $\D$-approximation for $1\leq i\leq n$. Applying the functor $\Hom_{\C}(X,-)$ to the above $\E$-triangles, we can get the following long exact sequence by Lemma \ref{2}
$$\Hom_{\C}(X,Y\langle i-1\rangle)\xrightarrow{~(f_i)_{\ast}~}\Hom_{\C}(X,D_i)\xrightarrow{~(g_i)_{\ast}~}\Hom_{\C}(X,Y\langle i\rangle)\rightarrow\E(X,Y\langle i-1\rangle)\rightarrow$$
$$\E(X,D_i)\rightarrow\cdots\rightarrow\E^k(X,Y\langle i-1\rangle)\rightarrow\E^k(X,D_i)\rightarrow\cdots.$$
Since $X\in\Z$ and $D_i\in\D$, we have $\E^k(X,D_i)=0$ for $1\leq k\leq n$. Thus $\E^k(X,Y\langle i-1\rangle)\cong\E^{k-1}(X,Y\langle i\rangle)$ holds for $2\leq k\leq n$, $1\leq i\leq n$.

Since $g_i$ is a right $\D$-approximation, one can check that Im$(g_i)_{\ast}=[\D](X,Y\langle i\rangle)$ holds for $1\leq i\leq n$. So
$$\mathfrak{U}(X,Y\langle i\rangle)=\Hom_{\C}(X,Y\langle i\rangle)/[\D](X,Y\langle i\rangle)=\Hom_{\C}(X,Y\langle i\rangle)/\text{Im}(g_i)_{\ast}\cong$$
$$\E(X,Y\langle i-1\rangle)\cong\E^2(X,Y\langle i-2\rangle)\cong\cdots\cong\E^i(X,Y)$$
 for any $1\leq i\leq n$.
\end{proof}
\begin{lemma}\label{b}
Let $(\X,\Y)$ be an $n$-cotorsion pair in $\C$ with core $\I(\X)$. Then $\D\subset\I(\X)$ if and only if $\D\subset\X\subset\Z$ and $\D\subset\Y\subset\Z$.
\end{lemma}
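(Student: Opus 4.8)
The plan is to prove the two implications separately, using the defining properties of an $n$-cotorsion pair together with the hypothesis that $\D$ is $(n+1)$-rigid and satisfies $\bigcap_{k=1}^{n}{}^{\perp_k}\D = \bigcap_{k=1}^{n}\D^{\perp_k} =: \Z$.

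First I would treat the easy direction: assume $\D \subseteq \X \subseteq \Z$ and $\D \subseteq \Y \subseteq \Z$. Then trivially $\D \subseteq \X \cap \Y = \I(\X)$. So this direction is immediate and requires no work beyond unwinding notation.

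For the converse, assume $\D \subseteq \I(\X) = \X \cap \Y$. Then $\D \subseteq \X$ and $\D \subseteq \Y$ are immediate, so the only thing to establish is $\X \subseteq \Z$ and $\Y \subseteq \Z$. I would argue $\X \subseteq \Z$ as follows (the case $\Y \subseteq \Z$ is dual). Take $X \in \X$. Since $\D \subseteq \Y$ and $(\X,\Y)$ is an $n$-cotorsion pair, condition (2) of Definition \ref{aaa} combined with $\mathbb{E}^k(\X,\Y)=0$ for all $1\le k\le n$ gives $\mathbb{E}^k(X, \D) = 0$ for all $1 \le k \le n$; that is, $X \in \bigcap_{k=1}^{n}{}^{\perp_k}\D = \Z$. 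Dually, since $\D \subseteq \X$, any $Y \in \Y$ satisfies $\mathbb{E}^k(\D, Y) = 0$ for all $1 \le k \le n$, so $Y \in \bigcap_{k=1}^{n}\D^{\perp_k} = \Z$. This establishes $\X \subseteq \Z$ and $\Y \subseteq \Z$, completing the converse.

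I do not anticipate a serious obstacle here: the statement is essentially a bookkeeping lemma, and the only mild subtlety is making sure one invokes the correct one-sided orthogonality — using $\D \subseteq \Y$ to control $\mathbb{E}^k(X,-)$ and $\D \subseteq \X$ to control $\mathbb{E}^k(-,Y)$ — and then reading off membership in $\Z$ from whichever of the two (equal) intersections $\bigcap_k {}^{\perp_k}\D$ or $\bigcap_k \D^{\perp_k}$ is relevant. The hypothesis that these two intersections coincide is what makes the single symbol $\Z$ legitimate and is used implicitly when we conclude both $\X$ and $\Y$ lie in it.
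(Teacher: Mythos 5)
Your proposal is correct and follows essentially the same route as the paper: the ``if'' direction is immediate from $\I(\X)=\X\cap\Y$, and the ``only if'' direction uses $\D\subseteq\Y$ to get $\mathbb{E}^k(X,\D)=0$ for $1\le k\le n$ and hence $X\in\Z$, with the dual argument for $\Y$. No gaps.
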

\begin{proof}
The `if' part is trivial since  $\I(\X)=\X\cap\Y$. To prove the `only if' part, let $X$ be an arbitrary object in $\X$. Since  $\D\subset\I(\X)\subset\Y$, we have $\E^i(X,\D)=0$  for $1\leq i\leq n$. It follows that $X\in\Z$ and hence $\X\subset\Z$. Similarly, we have $\Y\subset\Z$.
\end{proof}

Building on the previous lemma, we study the connection between $n$-cotorsion pairs in $\C$ whose cores contain $\D$, and those in the subfactor triangulated category $\mathfrak{U}$. In what follows, let $\W$ be a subcategory of $\C$ satisfying $\D \subseteq \W \subseteq \Z$. We use $\overline{\W}$ to denote the corresponding subcategory of $\Z$. Clearly, every subcategory of $\Z$ can be expressed in this way.

Let $\C$ be an extriangulated category, and let $\X$ and $\Y$ be subcategories of $\C$. We define the right and left orthogonal subcategories as
\[
\X^\perp := \{ C \in \C \mid \Hom_\C(\X, C) = 0 \} \quad \text{and} \quad {}^\perp\X := \{ C \in \C \mid \Hom_\C(C, \X) = 0 \}.
\]
We write $\X \ast \Y$ for the class of objects in $\C$ that can be realized as the middle terms of $\mathbb{E}$-triangles of the form
\[
\xymatrix{X \ar[r]^{f} & C \ar[r]^{g} & Y \ar@{-->}[r] &}\;\; \text{with } X \in \X \text{ and } Y \in \Y.
\]

To prove the main results of this section, we need the following preparations.

\begin{definition}
Suppose that $\C$ is an extriangulated category. We call a pair $(\X,\Y)$ of subcategories of $\C$ a \emph{torsion} \emph{pair} if
$$\Hom_\C(\X,\Y)=0\;\;\text{and}\;\;\C=\X\ast\Y.$$
\end{definition}

We have the following Wakamatsu-type lemma.

\begin{lemma}\label{h}
\emph{(1)} Let $\cal X$ be a contravariantly finite and extension closed subcategory of an extriangulated category $\C$. Then $(\cal X,\cal X^\bot)$ is a torsion pair.

\emph{(2)} Let $\cal X$ be a covariantly finite and extension closed subcategory of an extriangulated category $\C$. Then  $(^\bot{\cal X},\cal X)$ is a torsion pair.
\end{lemma}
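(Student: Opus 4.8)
The plan is to prove part (1) and then obtain part (2) by a dual argument. So fix a contravariantly finite, extension-closed subcategory $\X$ of $\C$. By definition of a torsion pair we must check two things: that $\Hom_\C(\X,\X^\perp)=0$, and that every object of $\C$ sits in an $\E$-triangle $X\to C\to Y\dashrightarrow$ with $X\in\X$ and $Y\in\X^\perp$. The first is immediate from the definition of $\X^\perp$. The substance is the factorization statement, and the natural device is the \emph{Wakamatsu technique}: given $C\in\C$, choose a right $\X$-approximation $f\colon X\to C$ (which exists since $\X$ is contravariantly finite). Since $\C$ has enough injectives, $f$ need not be a deflation, so I cannot immediately complete it to an $\E$-triangle; instead I first produce a deflation onto $C$ by adding an injective. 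Concretely, take an inflation $C\rightarrowtail I$ with $I\in\I$, or rather work with the combined morphism $(f,\,\text{incl})\colon X\oplus (\text{something})\to C$; the cleanest route is: form the $\E$-triangle $K\to X\oplus I'\xrightarrow{(f\ g)} C\dashrightarrow$ realizing a suitable extension, where $g\colon I'\to C$ is chosen so that $(f\ g)$ is a deflation (possible because $\C$ has enough projectives—take $g$ a deflation from a projective, noting $\mathcal P\subseteq$ everything relevant, or absorb it into $\X$ if $\mathcal P\subseteq\X$; if not, I instead argue via the pushout/pullback constructions of \cite{NP} directly on the approximation $f$). Relabel $X\oplus I'$ as $X_0\in\X$ only if $I'\in\X$; to avoid that hypothesis I will instead use the standard trick of taking $f\colon X\to C$ a right $\X$-approximation and completing $\binom{f}{\iota}\colon X\to C$...

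Let me restate the core argument cleanly. Given $C$, pick a right $\X$-approximation $f\colon X_C\to C$. Using enough injectives, pick an inflation $C\xrightarrow{\iota} I$ with $I\in\I$; then $X_C\oplus I'\to C$ need not help, so instead: by \cite{NP} every morphism admits a "good" factorization—take the $\E$-triangle $Y_C\xrightarrow{a} X_C'\xrightarrow{b} C\dashrightarrow^{\delta}$ where $b$ is built from $f$ together with a deflation from a projective $P\twoheadrightarrow C$, so $X_C' = X_C\oplus P$. Since $\mathcal P\subseteq{}^{\perp_1}\C$... hmm, $P$ need not lie in $\X$. The honest fix, which I will carry out, is: \textbf{assume} (as the paper's conventions give enough projectives) one can take the approximation to be a deflation, OR run the Wakamatsu argument purely with $\E$-triangles. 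I would present it as: choose $f\colon X_C\to C$ a right $\X$-approximation; since $\X$ is contravariantly finite we may assume $f$ is \emph{special}, i.e. fits in an $\E$-triangle $Y_C\to X_C\xrightarrow{f} C\dashrightarrow$ with $\E(\X, Y_C)=0$—this is exactly \cite[Remark 3.20]{ZZ1} invoked in the proof of Proposition~\ref{1}, provided $\X$ is extension closed. Then $Y_C\in\X^\perp$ requires upgrading $\E$-orthogonality to $\Hom$-orthogonality.

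To get $Y_C\in\X^\perp$, i.e. $\Hom_\C(\X,Y_C)=0$: take any $h\colon X'\to Y_C$ with $X'\in\X$ and let $a\colon Y_C\to X_C$ be the inflation above. The composite $f\circ(a\circ h)\colon X'\to C$... no: $a\circ h\colon X'\to X_C$, and $f\circ a = 0$ since $Y_C\to X_C\to C$ is a complex. So $a h$ lifts: but I want to show $h=0$. Use that $f$ is a right $\X$-approximation: consider $h$ precomposed appropriately—actually the clean statement is that $a\colon Y_C\to X_C$ being the fibre of the approximation $f$, any $X'\xrightarrow{h} Y_C$ gives $a h\colon X'\to X_C$ with $f(ah)=0$; by the long exact sequence $\Hom(X',Y_C)\to\Hom(X',X_C)\xrightarrow{f_*}\Hom(X',C)$ and the fact that $f_*$ is surjective (approximation) combined with $\E(X',Y_C)=0$ forcing the connecting map to vanish, we get $\Hom(X',Y_C)\xrightarrow{a_*}\Hom(X',X_C)$ is injective with image $\ker f_*$; this does not yet give $h=0$. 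The missing input is \emph{Wakamatsu's lemma proper}: iterate, or observe that $X_C$ itself admits a right $\X$-approximation etc. The standard resolution: $h\colon X'\to Y_C$, then $a h\colon X'\to X_C$; since $f$ is an $\X$-approximation and $fah=0$, there is no constraint—so instead I build a right $\X$-approximation $f'\colon X'_C\to Y_C$ and show $Y_C = \operatorname{im}$ stabilizes; the cleanest published route is \cite[Lemma]{...} but within this paper I will cite that such "stable" right approximations exist when $\X$ is extension-closed (the Wakamatsu argument: replace $X_C$ by the approximation and note the fibre lands in $\X^\perp$ because any map from $\X$ into it extends along $a$ and then dies by the approximation property and extension-closedness of $\X$ giving $Y_C\in\X^{\perp_1}$, then $\Hom$-vanishing follows by a pushout argument using that the conflation would split).

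\smallskip

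Given the length constraints, here is the streamlined plan I will actually write: \textbf{(Step 1)} $\Hom_\C(\X,\X^\perp)=0$ by definition. \textbf{(Step 2)} Fix $C\in\C$ and take a right $\X$-approximation $f\colon X_C\to C$. Enlarging $X_C$ by a projective (which we may, replacing $\X$ by $\add(\X\cup\mathcal P)$—or noting $\mathcal P$ plays no role in $\Hom$-orthogonality) we arrange $f$ to be a deflation, giving an $\E$-triangle $Y_C\xrightarrow{a} X_C\xrightarrow{f} C\dashrightarrow^{\delta}$. \textbf{(Step 3)} Show $Y_C\in\X^\perp$: let $h\colon X'\to Y_C$, $X'\in\X$. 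Form the pushout of $a$ along $h$ to get an $\E$-triangle $X'\to E\to C\dashrightarrow$; since $X_C, X'\in\X$ and $\X$ is extension-closed, and using that $f$ is an $\X$-approximation so the map $X_C\to E$... one concludes the pushout triangle splits, forcing $h$ to factor through $a$, and then iterating / using minimality that $h=0$. \textbf{(Step 4)} The $\E$-triangle of Step 2 witnesses $C\in\X\ast\X^\perp$, so $\C=\X\ast\X^\perp$, completing part (1). \textbf{(Step 5)} Apply (1) in $\C^{\mathrm{op}}$ (which is again extriangulated with enough projectives/injectives, swapping the roles) to deduce (2).

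The main obstacle is \textbf{Step 3}: upgrading the homological orthogonality $\E^1(\X,Y_C)=0$—which comes essentially for free from the Wakamatsu construction and extension-closedness of $\X$—to the Hom-orthogonality $\Hom_\C(\X,Y_C)=0$ demanded by the definition of torsion pair. This is precisely the point where one needs the approximation to be chosen carefully (a genuine Wakamatsu approximation, or equivalently to take the "stable image"): a naive right $\X$-approximation gives the Ext-vanishing but not the Hom-vanishing, and the fix is the classical Wakamatsu iteration argument, which in the extriangulated setting requires invoking the pushout/pullback axioms of \cite{NP} and the extension-closedness of $\X$ to see that the relevant conflation splits. Everything else is a routine transcription of the exact-category Wakamatsu lemma into $\E$-triangle language, together with the self-duality of the extriangulated axioms for Step 5.
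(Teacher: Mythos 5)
There is a genuine gap, and it originates in the orientation of your triangle in Step 2. The definition of torsion pair used here requires every $C$ to be the \emph{middle} term of an $\E$-triangle $X \to C \to Y \dashrightarrow$ with $X \in \X$ and $Y \in \X^{\perp}$. You instead arrange the right $\X$-approximation to be a deflation and form $Y_C \xrightarrow{a} X_C \xrightarrow{f} C \dashrightarrow$, which exhibits $C$ as the \emph{cone}; in an extriangulated category this cannot be rotated into the required shape, so your Step 4 does not establish $C \in \X \ast \X^{\perp}$. Worse, for this cocone $Y_C$ the assertion $\Hom_{\C}(\X, Y_C) = 0$ is false in general: your construction (the classical Wakamatsu lemma) only yields the Ext-vanishing $\E(\X, Y_C) = 0$. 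For instance, with $\X = \mathcal{P}$ the projectives of a module category, $Y_C$ is a syzygy of $C$ and receives many nonzero maps from $\X$. You correctly flag this as ``the main obstacle'' in Step 3, but the proposed fixes (a pushout that ``splits'', an unspecified iteration) cannot close a gap whose conclusion is false from that triangle.

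The paper's proof takes the other orientation from the outset: it chooses an $\E$-triangle $X \xrightarrow{f} C \xrightarrow{g} Y \dashrightarrow$ in which $f$ is an inflation \emph{and} a right minimal $\X$-approximation of $C$, so that $C$ is the middle term and $Y$ is the cone of the approximation. Then for any $u \colon X' \to Y$ with $X' \in \X$, pulling back $g$ along $u$ gives an $\E$-triangle $X \xrightarrow{m} A \xrightarrow{h} X' \dashrightarrow$ together with $v \colon A \to C$; extension-closure of $\X$ puts $A$ in $\X$, the approximation property gives $v = fs$, and right \emph{minimality} of $f$ (from $f = vm = fsm$) forces $sm$ to be an automorphism, so $m$ is a section, $h$ is a retraction, and $uh = gv = gfs = 0$ yields $u = 0$. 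Both the Hom-orthogonality of the cone and the decomposition $\C = \X \ast \X^{\perp}$ thus come from the same, correctly oriented triangle; right minimality is the essential input your argument never uses. Your Steps 1 and 5 (the trivial orthogonality and the dualization for part (2)) are fine.
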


\begin{proof}
(1) It is obvious that $\Hom(\cal X,\cal X^\bot)=0$.\\
Assume that $C$ is an object in $\C$, since $\X$ is contravariantly finite,  take an $\E$-triangle
$$\xymatrix{X\ar[r]^f&C\ar[r]^g&Y\ar@{-->}[r]&}$$
with a right minimal $\cal X$-approximation $f$ of $C$. We only have to show $Y\in\cal X^\bot$. \\Take any morphism $u\in
\Hom(X',Y)=0$ with $X'\in\cal X$.
By \cite[Remark 2.22]{NP}, we have a commutative diagram
$$\xymatrix{X\ar@{=}[d]\ar[r]^m&A\ar[d]^v\ar[r]^h&X'\ar[d]^u\ar@{-->}[r]&\\
X\ar[r]^f&C\ar[d]\ar[r]^g&Y\ar[d]\ar@{-->}[r]&\\
                 &Z\ar@{-->}[d]\ar@{=}[r]&Z\ar@{-->}[d]\\&&}$$
of $\E$-triangles. Since $\cal X$ is an extension closed subcategory, we have $A\in\cal X$. Since $f$ is
a right $\cal X$-approximation of $C$, there is a morphism $s:A\rightarrow X$ such that $v=fs$.
Thus, $f=vm=fsm$. Since $f$ is a right minimal, we have $sm$ is an automorphism,
and $m$ is a section. So $h$ is a retraction. Hence $uh=gv=gfs=0$, and then $u=0$.

(2) This is dual of (1).
\end{proof}

\begin{lemma}\label{4}
Let $\M_i$ be a contravariantly finite and extension closed subcategory of an extriangulated category $\C$ such that
$\E(\M_i,\M_j)=0$ for any $i<j$. Put
$$\X_n\colon=\add(\M_1\ast\M_2\ast\cdots\ast\M_n),\;\;\Y_n=\colon\bigcap\limits_{k=1}^{n}\M_i^\perp.$$
Then $(\X_n,\Y_n)$ is a torsion pair.
\end{lemma}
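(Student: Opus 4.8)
The plan is to build the torsion pair $(\X_n,\Y_n)$ by induction on $n$, using Lemma~\ref{h} as the base case and as the engine for the inductive step. For $n=1$ we have $\X_1=\add\M_1=\M_1$ (after noting $\M_1$ is already closed under direct summands, or replacing it by its additive closure, which remains contravariantly finite and extension closed) and $\Y_1=\M_1^\perp$, so Lemma~\ref{h}(1) gives the torsion pair directly. For the inductive step, suppose $(\X_{n-1},\Y_{n-1})$ is a torsion pair, where $\X_{n-1}=\add(\M_1\ast\cdots\ast\M_{n-1})$ and $\Y_{n-1}=\bigcap_{k=1}^{n-1}\M_k^\perp$.

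The key observation is that $\X_n=\X_{n-1}\ast\M_n$ up to additive closure, and dually $\Y_{n-1}=\M_n^\perp\ast\Y_n$ should hold because of the Ext-vanishing hypothesis $\E(\M_i,\M_j)=0$ for $i<j$. Concretely, I would first check that $\X_n$ is contravariantly finite and extension closed: extension-closedness of $\M_1\ast\cdots\ast\M_n$ follows from the associativity of the $\ast$-operation together with each $\M_i$ being extension closed and the hypothesis $\E(\M_i,\M_j)=0$ for $i<j$ (this is the standard argument that a ``filtered'' subcategory with the right Ext-vanishing between the layers is extension closed); contravariant finiteness follows since a $\ast$-product of finitely many contravariantly finite subcategories is again contravariantly finite. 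Then Lemma~\ref{h}(1) applied to $\X_n$ yields that $(\X_n,\X_n^\perp)$ is a torsion pair, and it remains to identify $\X_n^\perp$ with $\Y_n=\bigcap_{k=1}^n\M_k^\perp$.

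For that identification, the inclusion $\X_n^\perp\subseteq\Y_n$ is immediate since each $\M_k\subseteq\X_n$. For the reverse inclusion, take $C\in\Y_n$, so $\Hom_\C(\M_k,C)=0$ for all $1\le k\le n$; given any object $X\in\X_n$, write it (up to summands) as the middle term of a sequence of $\E$-triangles with layers in $\M_1,\dots,\M_n$, and chase $\Hom_\C(-,C)$ along these $\E$-triangles using Lemma~\ref{2}: each connecting map lands in an $\E$-group $\E(\M_i,C)$ which vanishes because $C\in\Y_n\subseteq\M_i^\perp$ forces, via the torsion pair structure at the previous stage, the relevant Ext to vanish — more precisely, $C\in\bigcap_{k=1}^{n-1}\M_k^\perp=\Y_{n-1}$ means $\Hom_\C(\X_{n-1},C)=0$ by the torsion pair $(\X_{n-1},\Y_{n-1})$, and then the $\E$-triangle $X'\to X\to M_n\dashrightarrow$ with $X'\in\X_{n-1}$, $M_n\in\M_n$ gives $\Hom_\C(X,C)=0$ from $\Hom_\C(M_n,C)=0$ and $\Hom_\C(X',C)=0$ together with the vanishing of the connecting term in $\Hom_\C$. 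So $\Hom_\C(\X_n,C)=0$, i.e. $C\in\X_n^\perp$.

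The main obstacle I anticipate is the bookkeeping in verifying that $\M_1\ast\cdots\ast\M_n$ is extension closed and that the $\Hom$-orthogonality propagates correctly through the tower of $\E$-triangles; in particular one must be careful that the hypothesis is only $\E(\M_i,\M_j)=0$ for $i<j$ (not symmetric and not for higher $\E^k$), and that this is exactly what is needed to run the octahedral/associativity argument for $\ast$ in the extriangulated setting (for which one invokes \cite[Remark 2.22]{NP} and the $3\times 3$ lemma, as already used in the proof of Lemma~\ref{h}). Once the extension-closedness and contravariant finiteness of $\X_n$ are in hand, the rest is a clean application of Lemma~\ref{h}(1) plus the orthogonality computation above.
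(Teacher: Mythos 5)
Your strategy is genuinely different from the paper's: you want to show that $\X_n=\add(\M_1\ast\cdots\ast\M_n)$ is extension closed and contravariantly finite, apply Lemma~\ref{h}(1) to $\X_n$ itself, and then identify $\X_n^\perp$ with $\Y_n$. The paper instead proves $\C=\X_n\ast\Y_n$ directly by induction: given a conflation $X_{i-1}\to C\to Y_{i-1}\dashrightarrow$ at stage $i-1$, it takes a minimal right $\M_i$-approximation $M_i\to Y_{i-1}$, uses the long exact sequence of Lemma~\ref{2} and the hypothesis $\E(\M_k,\M_i)=0$ for $k<i$ to check that the new cone $Y_i$ lies in $\Y_{i-1}\cap\M_i^\perp=\Y_i$, and glues via the diagram of \cite[Remark 2.22]{NP}; contravariant finiteness of $\X_n$ then falls out as a consequence of the torsion pair rather than being fed in as a hypothesis. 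Several pieces of your plan are sound: the extension-closedness of $\M_1\ast\cdots\ast\M_n$ does follow from associativity of $\ast$ together with $\E(\M_i,\M_j)=0$ for $i<j$ (which forces $\M_j\ast\M_i\subseteq\add(\M_i\ast\M_j)$ via split conflations), and your identification $\X_n^\perp=\Y_n$ is correct and in fact supplies the orthogonality $\Hom_\C(\X_n,\Y_n)=0$ that the paper leaves implicit --- though you do not need any connecting maps for it: the three-term exactness of $\Hom_\C(-,C)$ on the conflation $X'\to X\to M_n\dashrightarrow$ already gives $\Hom_\C(X,C)=0$ once both ends vanish.

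The genuine gap is the sentence ``contravariant finiteness follows since a $\ast$-product of finitely many contravariantly finite subcategories is again contravariantly finite.'' That claim is precisely the hard content of the lemma; it is not proved anywhere in the paper, and it cannot simply be quoted in the extriangulated setting. The standard triangulated-category argument takes a right $\X$-approximation $X_0\to C$, completes it to a triangle $X_0\to C\to C'\to\Sigma X_0$, approximates $C'$ by $\Y$, and forms a homotopy pullback; in an extriangulated category a morphism can be completed to a conflation only when it is an inflation or a deflation, and a right approximation need not be either. So, as stated, this step would fail, or at best requires a strong-approximation hypothesis together with a careful $\mathrm{(ET4)^{op}}$ argument --- which, once written out, is essentially the paper's inductive construction of the conflation $X_i\to C\to Y_i\dashrightarrow$. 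In effect your proposal defers the whole proof to an unproved auxiliary claim; to close the gap you must exhibit the right $\X_n$-approximation of an arbitrary object explicitly, and the cleanest way to do so is the paper's route.
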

\begin{proof}
The case $n=1$ is proved in Lemma \ref{h}. We show $\C=\X_n\ast\Y_n$ by induction on $n$. Assume that the assertion is true for $n=i-1$. For any $C\in\C$, take an $\E$-triangle
$$\xymatrix{X_{i-1}\ar[r]&C\ar[r]&Y_{i-1}\ar@{-->}[r]&}$$
with $X_{i-1}\in\X_{i-1}$ and $Y_{i-1}\in\Y_{i-1}$. For the object $Y_{i-1}$, take an $\E$-triangle
$$\xymatrix{M_i\ar[r]^{a_i}&Y_{i-1}\ar[r]&Y_{i}\ar@{-->}[r]&}$$
with a minimal right $\M_i$-approximation of $Y_{i-1}$. By Lemma \ref{h}, $Y_i\in\M_i^\perp$. Next, we show $Y_i\in\Y_{i-1}$. Applying $\Hom_{\C}(\M_k,-)$ for $1\leq k\leq i-1$ to the above $\E$-triangle, we get a long exact sequence by Lemma \ref{2}
$$\Hom_{\C}(\M_k,\M_i)\rightarrow\Hom_{\C}(\M_k,Y_{i-1})\rightarrow\Hom_{\C}(\M_k,Y_i)\rightarrow\E^1(\M_k,\M_i)\rightarrow\cdots.$$
Since $Y_{i-1}\in\Y_{i-1}$, we have $\Hom_{\C}(\M_k,Y_{i-1})=0$ for $1\leq k\leq i-1$. Moreover, we have $\E(\M_k,\M_i)=0$ by assumption, so $\Hom_{\C}(\M_k,Y_i)=0$ holds for $1\leq k\leq i-1$. Thus we show $Y_i\in\Y_{i-1}$. That means $Y_i\in\Y_{i-1}\cap\M_i^\perp=\Y_i$. By \cite[Remark 2.22]{NP}, we have a commutative diagram
$$\xymatrix{X_{i-1}\ar@{=}[d]\ar[r]^m&X_i\ar[d]^v\ar[r]^h&M_i\ar[d]^{a_i}\ar@{-->}[r]&\\
X_{i-1}\ar[r]&C\ar[d]\ar[r]&Y_{i-1}\ar[d]\ar@{-->}[r]&\\
                 &Y_i\ar@{-->}[d]\ar@{=}[r]&Y_i\ar@{-->}[d] \\&&}$$
of $\E$-triangles. Since $X_i\in\X_{i-1}\ast\M_i\subset\X_i$, we have $C\in\X_i\ast\Y_i$ by the second column $\E$-triangle.
\end{proof}

\begin{lemma}\label{a}
Let $\X$ be a subcategory of $\C$ satisfying  $\D\subset\X\subset\Z$. Then
\begin{itemize}
  \item [\rm (1)] $\X$ is a contravariantly finite subcategory of $\C$ if and only if $\overline{\X}$ is a contravariantly finite subcategory of $\mathfrak{U}$.
  \item [\rm (2)] $\X$ is a covariantly finite subcategory of $\C$ if and only if $\overline{\X}$ is a covariantly finite subcategory of $\mathfrak{U}$.
  \item [\rm (3)] $\X$ is a functorially finite subcategory of $\C$ if and only if $\overline{\X}$ is a functorially finite subcategory of $\mathfrak{U}$.
\end{itemize}
\end{lemma}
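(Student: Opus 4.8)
The plan is to prove (1), deduce (2) by a dual argument, and obtain (3) by combining (1) and (2). So the real content is (1): $\X$ is contravariantly finite in $\C$ if and only if $\overline{\X}$ is contravariantly finite in $\mathfrak{U}$. The key tool is that the quotient functor $\pi\colon\Z\to\mathfrak{U}$ is additive, dense (by construction $\mathfrak{U}$ and $\Z$ have the same objects), and full, and that it sends $\D$ to zero while identifying $\Hom_{\mathfrak{U}}(X,Y)=\Hom_{\Z}(X,Y)/\D(X,Y)$. Since $\D\subseteq\X$, we have $\pi(\X)=\overline{\X}$.

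First I would prove the forward direction. Suppose $\X$ is contravariantly finite in $\C$; I claim it is then contravariantly finite in $\Z$ as well: given $Z\in\Z$, take a right $\X$-approximation $f\colon X\to Z$ in $\C$ with $X\in\X$; since $\X\subseteq\Z$ and the composite $X\xrightarrow{f}Z$ lives in $\Z$ already, and since any morphism $X'\to Z$ with $X'\in\X$ factors through $f$ in $\C$ (hence in $\Z$, as all these objects and morphisms lie in $\Z$), $f$ is a right $\X$-approximation in $\Z$. Now apply $\pi$: for any $Z\in\Z$ and any morphism $\bar g\colon\overline{X'}\to\overline{Z}$ in $\mathfrak{U}$ with $X'\in\X$, lift $\bar g$ to $g\colon X'\to Z$ in $\Z$ (using that $\pi$ is full), factor $g=fh$ through the right $\X$-approximation $f\colon X\to Z$, and push back down: $\bar g=\bar f\,\bar h$. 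Hence $\bar f\colon\overline{X}\to\overline{Z}$ is a right $\overline{\X}$-approximation in $\mathfrak{U}$, so $\overline{\X}$ is contravariantly finite.

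For the converse, suppose $\overline{\X}$ is contravariantly finite in $\mathfrak{U}$. Let $C\in\C$; I must produce a right $\X$-approximation of $C$. Here the hypothesis that $\D$ is functorially finite in $\C$ enters: choose a right $\D$-approximation $d\colon D_0\to C$ with $D_0\in\D$, completing to an $\E$-triangle $C'\to D_0\xrightarrow{d}C\dashrightarrow$; since $\D\subseteq\Z$ and $\Z$ is extension-closed, and one checks $C'\in\Z$ (using $\E^k(\Z,\D)=0$ and the long exact sequences of Lemma \ref{2}, or rather $C'\in{}^{\perp_1}\D\cap\cdots={}^{\perp}$-conditions defining $\Z$), we reduce the approximation problem for $C$ to one inside $\Z$. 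Concretely, for $Z\in\Z$ take in $\mathfrak{U}$ a right $\overline{\X}$-approximation $\bar f\colon\overline{X}\to\overline{Z}$, lift it to $f\colon X\to Z$ in $\Z$, and form $f'=(f\ \ d_Z)\colon X\oplus D_Z\to Z$ where $d_Z\colon D_Z\to Z$ is a right $\D$-approximation; then $f'$ is a right $\X$-approximation of $Z$ in $\Z$, because any $u\colon X'\to Z$ with $X'\in\X$ maps to $\bar u$, which factors through $\bar f$ modulo $\D$, and the $\D$-part of the factorization is absorbed by $d_Z$ (using again $\D$ functorially finite so that $u$ minus that part factors through $f$). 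Finally, right $\X$-approximations of objects of $\Z$ are right $\X$-approximations in $\C$ because $\X\subseteq\Z$ and $\E^k(\X,\D)=0$ forces every map from $\X$ into $C$ to land, up to the $\D$-approximation $d$ of $C$, inside $\Z$.

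The main obstacle is the converse direction: lifting an approximation that exists only in the \emph{quotient} $\mathfrak{U}$ back to an honest approximation in $\C$. The subtlety is twofold — one must correctly handle morphisms that become zero in $\mathfrak{U}$ (i.e.\ factor through $\D$), which is exactly where the functorial finiteness of $\D$ in $\C$ is indispensable; and one must bridge from approximations \emph{in $\Z$} to approximations \emph{in $\C$}, which uses that $\Z$ is defined by the vanishing conditions $\bigcap_{k=1}^{n}{}^{\perp_k}\D=\bigcap_{k=1}^{n}\D^{\perp_k}$ together with the long exact sequences of Lemma \ref{2} to show that the cokernel of a $\D$-approximation of an arbitrary $C\in\C$ stays in $\Z$. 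Parts (2) and (3) then require no new ideas: (2) is the formal dual (replace right $\D$-approximations, cocones, and ${}^{\perp}$ by left $\D$-approximations, cones, and $\perp^{}$), and (3) is immediate since functorial finiteness means contravariant plus covariant finiteness and these have just been matched on both sides of $\pi$.
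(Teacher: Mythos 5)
Your forward direction and the core trick for the converse (lift a right $\overline{\X}$-approximation from $\mathfrak{U}$ and adjoin a right $\D$-approximation so that the part of a factorization killed in the quotient is absorbed) coincide with the paper's proof: the paper's candidate approximation is exactly $(f\circ g,\,f\circ c)\colon X\oplus D\to C$. However, there is a genuine gap in how you pass from objects of $\Z$ to an \emph{arbitrary} object $C\in\C$. You propose to take a right $\D$-approximation $d\colon D_0\to C$, complete it to an $\E$-triangle $C'\to D_0\xrightarrow{d}C\dashrightarrow$ with $C'\in\Z$, and thereby ``reduce the problem to $\Z$.'' This does not work as stated: first, a right $\D$-approximation need not be a deflation, so the conflation need not exist (nothing forces $\P\subseteq\D$ here); second, even granting the conflation, a morphism $h\colon X'\to C$ with $X'\in\X$ lifts along $d$ only if $h^{*}\delta=0$ in $\E(X',C')$, and the vanishing you have available is $\E^{k}(X',\D)=0$, not $\E(X',C')=0$, since $C'$ lies in $\Z$ rather than in $\D$. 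So the triangle gives you no factorization of $h$ through anything in $\Z$, and your final sentence (``every map from $\X$ into $C$ lands, up to the $\D$-approximation, inside $\Z$'') is precisely the unproved step.

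The paper closes this gap differently: it first shows (Lemma \ref{4} and its dual, a Wakamatsu-type iterated-approximation argument applied to $\D$) that $\Z$ itself is functorially finite in $\C$. One then takes a right $\Z$-approximation $f\colon Z\to C$; since every test object $X'\in\X$ lies in $\Z$, every $h\colon X'\to C$ factors as $h=f\circ\ell$ with $\ell\colon X'\to Z$, and the problem genuinely reduces to approximating $Z$ inside $\Z$, where your (and the paper's) quotient argument applies. If you replace your $\D$-approximation conflation by this contravariant finiteness of $\Z$, the rest of your argument — including the duality for (2) and the combination for (3) — goes through and agrees with the paper.
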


\begin{proof}
We only prove the first assertion, the others can be proved similarly.

Assume that $\X$ is a contravariantly finite subcategory of $\C$. Let $Z$ be an object in $\Z$. Since $\X$ is contravariantly finite, there exists an object $X \in \X$ and a morphism $f \colon X \to Z$ which serves as a right $\X$-approximation of $Z$ in $\C$. It is straightforward to verify that the induced morphism $\overline{f} \colon X \to Z$ is a right $\overline{\X}$-approximation of $Z$ in the quotient category $\mathfrak{U}$.

Conversely, assume that $\overline{\X}$ is a contravariantly finite subcategory of $\mathfrak{U}$. Since $\D$ is functorially finite in $\C$, it follows from Lemma~\ref{4} and its dual that $\Z$ is also functorially finite in $\C$. Let $C$ be an object in $\C$. Then, as $\Z$ is functorially finite, there exists an object $Z \in \Z$ along with a morphism $f \colon Z \to C$ that is a right $\Z$-approximation of $C$.

By assumption, there exists $X \in \X$ and a morphism $\overline{g} \colon X \to Z$ in $\mathfrak{U}$ such that it is a right $\overline{\X}$-approximation of $Z$. Moreover, since $\D$ is functorially finite in $\C$ and $Z \in \Z \subseteq \C$, there exists an object $D \in \D$ with a morphism $c \colon D \to Z$ forming a right $\D$-approximation of $Z$. We claim that the morphism
\[
(f \circ g,\, f \circ c) \colon X \oplus D \to C
\]
is a right $\X$-approximation of $C$ in $\C$.

To verify the claim, consider any morphism $h \colon X' \to C$ with $X' \in \X \subseteq \Z$. Since $f \colon Z \to C$ is a right $\Z$-approximation, there exists $\ell \colon X' \to Z$ such that $h = f \circ \ell$. As $\overline{g} \colon X \to Z$ is a right $\overline{\X}$-approximation in $\mathfrak{U}$, there exists a morphism $\overline{k} \colon X' \to X$ such that $\overline{\ell} = \overline{g} \circ \overline{k}$ and then
$
\overline{\ell - g \circ k} = \overline{0}.
$
Hence, the morphism $\ell - g \circ k$ factors through some $D_1 \in \D$; that is, there exist morphisms $a_1 \colon X' \to D_1$ and $a_2 \colon D_1 \to Z$ such that $\ell - g \circ k = a_2 \circ a_1$. Since $c \colon D \to Z$ is a right $\D$-approximation, there exists $d \colon D_1 \to D$ such that $a_2 = c \circ d$. Consequently,
\[
\ell = g \circ k + c \circ d \circ a_1 \quad \text{and} \quad h = f \circ \ell = f \circ (g \circ k + c \circ d \circ a_1).
\]
This shows that $h$ factors through the morphism $(f \circ g, f \circ c) \colon X \oplus D \to C$ via
\[
X' \xrightarrow{\binom{k}{d \circ a_1}} X \oplus D.
\]
Thus, we conclude that $(f \circ g, f \circ c)$ is indeed a right $\X$-approximation of $C$ in $\C$, and therefore, $\X$ is contravariantly finite.
\end{proof}

Thus, one can consider the relationship between cotorsion pairs in $\C$ whose cores contain
$\D$ and cotorsion pairs in the subfactor triangulated category $\mathfrak{U}$.

\begin{theorem}\label{d}
Let $(\X,\Y)$ be an $n$-cotorsion pair in $\C$ with $\D\subset\I(\X)$. Then $(\overline{\X},\overline{\Y})$ is an $n$-cotorsion pair in $\mathfrak{U}$ with $\I(\overline{\X})=\overline{\I(\X)}$. Moreover, the map $(\X,\Y)\mapsto(\overline{\X},\overline{\Y})$ is a bijection from the set of $n$-cotorsion pair in $\C$ whose cores contain $\D$ to the set of $n$-cotorsion pair in $\mathfrak{U}$.
\end{theorem}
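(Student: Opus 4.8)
The proof falls into three parts: (i) show that if $(\X,\Y)$ is an $n$-cotorsion pair in $\C$ with $\D\subset\I(\X)$, then $(\overline{\X},\overline{\Y})$ is an $n$-cotorsion pair in $\mathfrak{U}$; (ii) identify the core, $\I(\overline{\X})=\overline{\I(\X)}$; (iii) verify that the assignment $(\X,\Y)\mapsto(\overline{\X},\overline{\Y})$ is a bijection onto all $n$-cotorsion pairs in $\mathfrak{U}$. Throughout I will freely use Lemma~\ref{b} (which tells us $\D\subset\X\subset\Z$ and $\D\subset\Y\subset\Z$, so that $\overline{\X},\overline{\Y}$ are genuinely subcategories of $\mathfrak{U}$), Lemma~\ref{c} (the key translation $\mathfrak{U}(X,Y\langle i\rangle)\cong\E^i(X,Y)$ for $1\le i\le n$ and $X,Y\in\Z$), and Lemma~\ref{a} (finiteness conditions transfer between $\C$ and $\mathfrak{U}$).

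\textbf{Part (i): verifying the three axioms.} Since $\mathfrak{U}$ is a triangulated category with shift $\langle1\rangle$, an $n$-cotorsion pair there is characterized by the same three conditions of Definition~\ref{aaa}, with $\E^k$ replaced by $\mathfrak{U}(-,-\langle k\rangle)$. For condition (1), I must show $\overline{\X}=\bigcap_{k=1}^{n}{}^{\perp_k}\overline{\Y}$ in $\mathfrak{U}$. The inclusion $\subseteq$ is immediate from Lemma~\ref{c}: for $X\in\X$, $Y\in\Y$ and $1\le k\le n$ we have $\mathfrak{U}(X,Y\langle k\rangle)\cong\E^k(X,Y)=0$. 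For $\supseteq$, suppose $\overline{M}\in\mathfrak{U}$ satisfies $\mathfrak{U}(M,\overline{\Y}\langle k\rangle)=0$ for all $1\le k\le n$; we may take $M\in\Z$ as a representative. Applying Lemma~\ref{c} again (valid since $M\in\Z$ and every object of $\overline{\Y}$ lifts to $\Y\subset\Z$), we get $\E^k(M,Y)=0$ for all $Y\in\Y$, $1\le k\le n$, i.e. $M\in\bigcap_{k=1}^{n}{}^{\perp_k}\Y=\X$, hence $\overline{M}\in\overline{\X}$. Condition (2) is dual. Condition (3) — that $\overline{\X}$ is contravariantly finite and $\overline{\Y}$ covariantly finite in $\mathfrak{U}$ — follows directly from Lemma~\ref{a}(1),(2) applied to $\X$ and $\Y$, since $\X,\Y$ are the required finiteness types in $\C$ by hypothesis. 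This completes Part (i).

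\textbf{Parts (ii) and (iii).} The core identity $\I(\overline{\X})=\overline{\X}\cap\overline{\Y}=\overline{\X\cap\Y}=\overline{\I(\X)}$ is essentially formal: the inclusion $\overline{\X\cap\Y}\subseteq\overline{\X}\cap\overline{\Y}$ is clear, while for the reverse, an object of $\overline{\X}\cap\overline{\Y}$ is represented by some $Z\in\Z$ which is isomorphic in $\mathfrak{U}$ to an object of $\X$ and to an object of $\Y$; using that $\X,\Y$ are closed under direct summands and isomorphisms and that $\D\subset\X\cap\Y$ (so $\D$-summands do no harm), one checks $Z\in\X\cap\Y$. For the bijection in Part (iii), note first that the map is well-defined by Part (i). Injectivity: since $\D\subset\W\subseteq\Z$ determines $\W$ from $\overline{\W}$ uniquely (every object of $\Z$ appears, and $\D$ is fixed), distinct pairs $(\X,\Y)\ne(\X',\Y')$ give distinct images. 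Surjectivity is the heart of the matter: given an $n$-cotorsion pair $(\overline{\U},\overline{\V})$ in $\mathfrak{U}$, lift it to subcategories $\U,\V$ of $\C$ with $\D\subseteq\U,\V\subseteq\Z$ (each being the preimage under $\Z\to\mathfrak{U}$ of the respective subcategory, closed under summands and isomorphisms), and verify $(\U,\V)$ is an $n$-cotorsion pair in $\C$ with $\D\subset\I(\U)$. The axioms transfer back using Lemma~\ref{c} and Lemma~\ref{a} exactly as in Part (i), \emph{except} that one must also confirm $\U=\bigcap_{k=1}^n{}^{\perp_k}\V$ as subcategories of all of $\C$ (not just within $\Z$); here one uses that any $M\in\C$ with $\E^k(M,\V)=0$ for $1\le k\le n$ must in particular kill $\D\subset\V$, forcing $M\in\bigcap_{k=1}^n{}^{\perp_k}\D=\Z$, after which Lemma~\ref{c} applies.

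\textbf{The main obstacle.} The delicate point is surjectivity, specifically the step just flagged: showing that the orthogonality conditions defining $\U$ and $\V$ \emph{inside $\C$} are automatically satisfied once they hold after passing to $\mathfrak{U}$. The crucial mechanism is that membership in $\V$ forces orthogonality to $\D$, which by the defining property $\bigcap_{k=1}^{n}{}^{\perp_k}\D=\bigcap_{k=1}^{n}\D^{\perp_k}=\Z$ confines everything to $\Z$, where Lemma~\ref{c} converts $\mathfrak{U}$-Hom's back into $\E^k$'s. A second, more technical, point is checking that the lifted subcategories are honestly closed under direct summands and isomorphisms in $\C$ and still contain $\D$ — routine, but it is what makes the two maps mutually inverse rather than merely one-sided. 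I expect the finiteness conditions to cause no trouble, as Lemma~\ref{a} has already packaged that work.
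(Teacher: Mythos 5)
Your proposal is correct and follows essentially the same route as the paper: both rest on Lemma~\ref{b} to place $\X,\Y$ inside $\Z$, Lemma~\ref{c} to translate the orthogonality conditions, and Lemma~\ref{a} to transfer the finiteness conditions. The only difference is that you spell out the surjectivity of the bijection (in particular the observation that $\E^k(M,\D)=0$ forces $M\in\Z$), which the paper dismisses as ``obvious''; your elaboration is accurate.
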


\begin{proof}
Since $(\X,\Y)$ is an $n$-cotorsion pair in $\C$ with $\D\subset\I(\X)$, we have $\D\subset\X\subset\Z$ and $\D\subset\Y\subset\Z$ by Lemma \ref{b}. So $\overline{\X}$ and $\overline{\Y}$ are subcategories of $\mathfrak{U}$. By Lemma \ref{c}, there is an isomorphism $\mathfrak{U}(\X,\Y\langle i\rangle)\cong\E^i(\X,\Y)$ for any $1\leq i\leq n$, so $\X=\bigcap\limits_{i=1}^{n}{^{\bot_i}}\Y$ for all $1\leq i\leq n$ if and only if $\overline{\X}=\bigcap\limits_{i=1}^{n}{^\bot}\overline{\Y}\langle i\rangle$ for all $1\leq i\leq n$. Moreover, By Lemma \ref{a}, we get $(\overline{\X},\overline{\Y})$ is an $n$-cotorsion pair in $\mathfrak{U}$. In this case, we have  $\I(\overline{\X})=\overline{\X}\cap\overline{\Y}=\overline{\X\cap\Y}=\overline{\I(\X)}$.

The bijection from the set of $n$-cotorsion pair in $\C$ whose cores contain $\D$ to the set of $n$-cotorsion pair in $\Z$ is obvious by Lemma \ref{b}, Lemma \ref{c} and Lemma \ref{a}.
\end{proof}

This theorem immediately yields the following important conclusion.

\begin{corollary} \label{cor11}
The correspondence $\X\mapsto \overline{\X}$ gives
a one-one correspondence between $(n+1)$-cluster tilting subcategories of $\C$
containing $\D$ and $(n+1)$-cluster tilting subcategories of $\U$.
\end{corollary}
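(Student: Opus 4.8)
The plan is to deduce Corollary~\ref{cor11} directly from Theorem~\ref{d} together with the two remarks identifying self-paired $n$-cotorsion pairs with $(n+1)$-cluster tilting subcategories. First I would recall that, by the remark following Definition~\ref{aaa}, a subcategory $\mathcal{M}$ of $\C$ is $(n+1)$-cluster tilting if and only if $(\mathcal{M},\mathcal{M})$ is an $n$-cotorsion pair in $\C$; likewise $\overline{\mathcal{M}}\subseteq\mathfrak{U}$ is $(n+1)$-cluster tilting if and only if $(\overline{\mathcal{M}},\overline{\mathcal{M}})$ is an $n$-cotorsion pair in $\mathfrak{U}$ (note $\mathfrak{U}=\U$ in the corollary's notation). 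Next I would observe that an $(n+1)$-cluster tilting subcategory $\mathcal{M}$ of $\C$ contains $\D$ precisely when its core $\I(\mathcal{M})=\mathcal{M}\cap\mathcal{M}=\mathcal{M}$ contains $\D$, so the hypothesis "$\mathcal{M}$ contains $\D$" is exactly the hypothesis "$\D\subset\I(\mathcal{M})$" needed to invoke Theorem~\ref{d}.

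The main argument is then a restriction of the bijection in Theorem~\ref{d}. That theorem gives a bijection $(\X,\Y)\mapsto(\overline{\X},\overline{\Y})$ between $n$-cotorsion pairs in $\C$ whose cores contain $\D$ and all $n$-cotorsion pairs in $\mathfrak{U}$. I would argue that this bijection restricts to the subsets of \emph{self-paired} $n$-cotorsion pairs, i.e.\ those of the form $(\mathcal{M},\mathcal{M})$. Indeed, if $\X=\Y$ then clearly $\overline{\X}=\overline{\Y}$, so the image of a self-paired pair is self-paired. Conversely, since the map is a bijection, its inverse sends $(\overline{\mathcal{N}},\overline{\mathcal{N}})$ back to some $(\X,\Y)$ with $\overline{\X}=\overline{\mathcal{N}}=\overline{\Y}$; because $\D\subset\X$ and $\D\subset\Y$ and passing to $\overline{\,\cdot\,}$ is injective on subcategories containing $\D$ (every subcategory of $\Z$ arises from a unique such $\W$, as noted before Lemma~\ref{h}'s setup), we get $\X=\Y$. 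Hence the restricted map is a bijection between self-paired $n$-cotorsion pairs in $\C$ with core containing $\D$ and self-paired $n$-cotorsion pairs in $\mathfrak{U}$.

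Combining the two identifications, the chain of equivalences reads: $\mathcal{M}$ is $(n+1)$-cluster tilting in $\C$ and contains $\D$ $\iff$ $(\mathcal{M},\mathcal{M})$ is an $n$-cotorsion pair in $\C$ with $\D\subset\I(\mathcal{M})$ $\iff$ $(\overline{\mathcal{M}},\overline{\mathcal{M}})$ is an $n$-cotorsion pair in $\mathfrak{U}$ $\iff$ $\overline{\mathcal{M}}$ is $(n+1)$-cluster tilting in $\U$. This shows $\mathcal{M}\mapsto\overline{\mathcal{M}}$ is well-defined and bijective between the stated classes, which is exactly the corollary.

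I do not expect any serious obstacle here, since essentially all the work has been front-loaded into Theorem~\ref{d} and Lemma~\ref{c}. The one point requiring a line of care is the injectivity of $\W\mapsto\overline{\W}$ on subcategories satisfying $\D\subseteq\W\subseteq\Z$ — that is, recovering $\mathcal{M}$ from $\overline{\mathcal{M}}$ — but this is immediate from the fact that $\overline{\W}$ has the same objects as $\W$ and every object of $\Z$ lifts; this was already used implicitly in the proof of Theorem~\ref{d}. Everything else is a formal translation through the dictionary "$(\mathcal{U},\mathcal{U})$ is an $n$-cotorsion pair $\iff$ $\mathcal{U}$ is $(n+1)$-cluster tilting."
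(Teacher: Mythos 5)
Your proposal is correct and follows exactly the route the paper intends: the paper gives no explicit proof, stating only that Theorem~\ref{d} ``immediately yields'' the corollary, and the implicit argument is precisely your restriction of the bijection of Theorem~\ref{d} to self-paired pairs $(\mathcal{M},\mathcal{M})$ combined with the remark that $(\mathcal{M},\mathcal{M})$ is an $n$-cotorsion pair if and only if $\mathcal{M}$ is $(n+1)$-cluster tilting. Your extra care about injectivity of $\mathcal{W}\mapsto\overline{\mathcal{W}}$ is a reasonable addition but does not change the substance.
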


\begin{remark}
In the case where $\mathcal{C}$ is a triangulated category, Corollary \ref{cor11} coincides with \cite[Theorem 4.9(1)]{IY}.
\end{remark}

\begin{remark}
When $n=1$, Theorem~\ref{d} recovers the classical case and
 coincides with \cite[Theorem 3.2]{ZZ}.
\end{remark}

\subsection{Mutation of $n$-cotorsion pairs}
Let $\C$ be an extriangulated category with enough projectives and enough injectives. In this subsection, we study mutation of $n$-cotorsion pairs.

\begin{definition}\label{e}
Fix a functorially finite $(n+1)$-rigid subcategory $\D$ of $\C$. For a subcategory $\X$ of $\C$, let $\mu^{-1}_{\D}(\X)$ be a subcategory of $\C$ consisting of all $M\in\C$ such that there exists an $\E$-triangle
 $$\xymatrix{X\ar[r]^{f}& D\ar[r]&M\ar@{-->}[r]&}$$
with $X\in\X$ and a left $\D$-approximation $f$. Dually, for a subcategory $\Y$ of $\C$, let $\mu_{\D}(\Y)$ a subcategory of $\C$ consisting of all $M\in\C$ such that there exists an $\E$-triangle
 $$\xymatrix{M\ar[r]& D\ar[r]^{g}&Y\ar@{-->}[r]&}$$
with  $Y\in\Y$ and a right $\D$-approximation  $g$.

 In this case, $\mu^{-1}_{\D}(\X)$ is called the $forward$ $\D$-mutation of $\X$ and $\mu_{\D}(\Y)$ is called the $backward$ $\D$-mutation of $\Y$.
\end{definition}

\begin{remark}
Note that our definition is a natural generalization of Definition~\ref{3}. When $\C$ is a triangulated category, it coincides with Definition~3.13 in \cite{CZ}. In particular, when $n=1$, our definition of mutation recovers the classical Iyama-Yoshino mutation \cite{IY}.
\end{remark}

\begin{proposition}
Let $\D$ be a functorially finite $(n+1)$-rigid subcategory of $\C$  satisfying $$\bigcap\limits_{k=1}^{n}{^{\perp_k}}\D=\bigcap\limits_{k=1}^{n}\D^{\perp_k}.$$
 Then the maps $\mu^{-1}_{\D}(-)$ and $\mu_{\D}(-)$ are mutually inverse on the set of subcategories $\M$ of $\C$ satisfying $\D\subset\M\subset\Z$.
\end{proposition}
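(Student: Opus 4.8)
The plan is to show that for any subcategory $\M$ with $\D\subset\M\subset\Z$ we have $\mu_{\D}(\mu^{-1}_{\D}(\M))=\M$ and $\mu^{-1}_{\D}(\mu_{\D}(\M))=\M$. By symmetry it suffices to treat one of these, say the first. Recall from the subfactor construction that since $(\Z,\Z)$ is a $\D$-mutation pair, each object $X\in\Z$ fits into an $\E$-triangle $X\xrightarrow{f}D\xrightarrow{g}X\langle1\rangle$ with $D\in\D$, $f$ a left $\D$-approximation, $g$ a right $\D$-approximation, and $X\langle1\rangle\in\Z$; this $\E$-triangle is unique up to isomorphism in $\mathfrak{U}$, so the assignment $X\mapsto X\langle1\rangle$ descends to the shift autoequivalence $\langle1\rangle$ of $\mathfrak{U}$. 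The key observation is that the defining $\E$-triangle of $\mu^{-1}_{\D}(\X)$ is \emph{exactly} this one: if $M\in\mu^{-1}_{\D}(\M)$ via $X\xrightarrow{f}D\to M$ with $X\in\M$ and $f$ a left $\D$-approximation, then $X\in\Z$ forces $M\in\Z$ (by extension-closedness of $\Z$ together with $\D\subset\Z$), and $M\cong X\langle1\rangle$ in $\mathfrak{U}$. Hence $\overline{\mu^{-1}_{\D}(\M)}=\overline{\M}\langle1\rangle$, and dually $\overline{\mu_{\D}(\M)}=\overline{\M}\langle-1\rangle$.

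Given this, the argument proceeds in three steps. \textbf{Step 1: well-definedness in $\Z$.} Using extension-closedness of $\Z$ and $\D\subset\Z$, show $\D\subset\mu^{-1}_{\D}(\M)\subset\Z$ and $\D\subset\mu_{\D}(\M)\subset\Z$, so these operations stay within the relevant class of subcategories; in particular $\D\in\mu^{-1}_{\D}(\M)$ because $\D\subset\M$ and any identity-type $\E$-triangle $D\xrightarrow{(1,0)^{t}}D\oplus D'\to D'$ with $D'\in\D$ exhibits $D'\in\mu^{-1}_{\D}(\M)$, and one treats zero separately. \textbf{Step 2: translation to $\mathfrak{U}$.} Establish the identities $\overline{\mu^{-1}_{\D}(\M)}=\overline{\M}\langle1\rangle$ and $\overline{\mu_{\D}(\M)}=\overline{\M}\langle-1\rangle$ as subcategories of $\mathfrak{U}$, using the uniqueness up to isomorphism (in $\mathfrak{U}$) of the $\E$-triangle realizing the shift, together with Lemma~\ref{a}-type bookkeeping to see that a subcategory of $\Z$ is determined by its image in $\mathfrak{U}$ once it contains $\D$. \textbf{Step 3: conclude.} Then $\overline{\mu_{\D}(\mu^{-1}_{\D}(\M))}=\overline{\mu^{-1}_{\D}(\M)}\langle-1\rangle=\overline{\M}\langle1\rangle\langle-1\rangle=\overline{\M}$, and since both $\mu_{\D}(\mu^{-1}_{\D}(\M))$ and $\M$ contain $\D$ and lie in $\Z$, the faithfulness of $\W\mapsto\overline{\W}$ on such subcategories gives $\mu_{\D}(\mu^{-1}_{\D}(\M))=\M$; the other composite is symmetric.

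The main obstacle I anticipate is \textbf{Step 2}, specifically verifying that the $\E$-triangle in Definition~\ref{e} genuinely computes the shift $\langle1\rangle$ rather than merely some object mapping to it: one must check that a left $\D$-approximation $f\colon X\to D$ arising in the definition of $\mu^{-1}_{\D}$ can be taken (or completed) to the same left $\D$-approximation used to build $X\langle1\rangle$, and that different choices of $D$ (e.g. adding summands from $\D$, or non-minimal approximations) produce objects that are isomorphic in $\mathfrak{U}$ but possibly not in $\C$ — hence the operations $\mu^{-1}_{\D}$, $\mu_{\D}$ must be read as producing subcategories closed under the relevant isomorphisms and direct summands, matching the convention in Definition~\ref{e}. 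A secondary point to handle carefully is that $\mu^{-1}_{\D}(\M)$ as defined is already closed under direct summands and isomorphism (or, if not literally, one works with its closure), so that the equality $\overline{\mu^{-1}_{\D}(\M)}=\overline{\M}\langle1\rangle$ holds on the nose; once this is pinned down, Steps~1 and~3 are routine diagram chases using extension-closedness of $\Z$, the $\D$-mutation pair structure, and the bijectivity already recorded in Theorem~\ref{d}.
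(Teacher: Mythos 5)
Your route is genuinely different from the paper's. The paper argues entirely at the level of a single object: for $M\in\M\subset\Z$, take the $\E$-triangle $X\xrightarrow{f}D\xrightarrow{g}M\dashrightarrow$ defining $\mu_{\D}(M)=X$ (with $g$ a right $\D$-approximation), and observe that $\E^1(M,D')=0$ for every $D'\in\D$ because $M\in\Z$; the exact sequence $\Hom_{\C}(D,D')\xrightarrow{f^{*}}\Hom_{\C}(X,D')\to\E^1(M,D')=0$ then shows that $f$ is automatically a left $\D$-approximation, so the very same triangle witnesses $\mu^{-1}_{\D}(X)=M$. That is the entire proof --- no passage to $\mathfrak{U}$ and no identification with the shift functor. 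Your plan of proving $\overline{\mu^{-1}_{\D}(\M)}=\overline{\M}\langle1\rangle$ and then invoking the correspondence between subcategories of $\Z$ containing $\D$ and subcategories of $\mathfrak{U}$ is workable, and it is essentially how the authors later exploit this proposition inside Theorem \ref{main}; what it buys is a cleaner conceptual picture, at the cost of having to verify that the defining triangle of $\mu^{-1}_{\D}$ really computes $\langle1\rangle$ --- which, once unwound, is exactly the paper's one-line computation.

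One step as written is incorrect and needs repair. You assert that for $X\in\M$ and a triangle $X\xrightarrow{f}D\to M$ with $f$ a left $\D$-approximation, ``$X\in\Z$ forces $M\in\Z$ by extension-closedness of $\Z$.'' Extension-closedness controls the \emph{middle} term of a conflation whose outer terms lie in $\Z$; here $M$ is the cone, so it gives nothing. The correct argument is: apply $\Hom_{\C}(-,D')$ to the triangle; surjectivity of $f^{*}$ (because $f$ is a left $\D$-approximation) together with $\E^{k}(D,D')=0$ and $\E^{k}(X,D')=0$ for $1\leq k\leq n$ yields $\E^{k}(M,D')=0$ for $1\leq k\leq n$, i.e. $M\in\bigcap_{k=1}^{n}{}^{\perp_k}\D$, and this equals $\Z$ precisely because of the standing hypothesis $\bigcap_{k=1}^{n}{}^{\perp_k}\D=\bigcap_{k=1}^{n}\D^{\perp_k}$. (That hypothesis is essential here: trying to verify $\E^{k}(\D,M)=0$ directly from the same triangle runs into the uncontrolled group $\E^{n+1}(\D,X)$ at $k=n$.) With this repair, and the dual observation that $g$ is then automatically a right $\D$-approximation, your Steps 2 and 3 go through.
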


\begin{proof}
Let $M$ be an object of $M$. By Definition \ref{e}, $\mu_{\D}(M)=X$ such that there exists an $\E$-triangle $$\xymatrix{X\ar[r]^{f}& D\ar[r]^{g}&M\ar@{-->}[r]&}$$
with a right $\D$-approximation $g$. Since $M\in\Z$, we have $\E^1(M,D)=0$. So $f$ is a left $\D$-approximation of $X$. This shows $\mu^{-1}_{\D}(\mu_{\D}(M))=M$.
\end{proof}

The following theorem shows that any mutation of an $n$-cotorsion pair
is again an $n$-cotorsion pair.

\begin{theorem}\label{main}
Let $(\X,\Y)$ be an $n$-cotorsion pair in $\C$ and $\D\subset\I(\X)$ be a functorially finite subcategory of $\C$ satisfying  $\bigcap\limits_{k=1}^{n}{^{\perp_k}}\D=\bigcap\limits_{k=1}^{n}\D^{\perp_k}$. Then the pairs $$(\mu^{-1}_{\D}(\X),\mu^{-1}_{\D}(\Y))\hspace{2mm}\mbox{and}\hspace{3mm}(\mu_{\D}(\X),\mu_{\D}(\Y))$$ are  $n$-cotorsion pairs in $\C$ and we have
$$\I(\mu^{-1}_{\D}(\X))=\mu^{-1}_{\D}(\I(\X))\hspace{2mm}\mbox{and}\hspace{3mm}\I(\mu_{\D}(\X))=\mu_{\D}(\I(\X)).$$
\end{theorem}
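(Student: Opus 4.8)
The plan is to reduce the statement to the subfactor triangulated category $\mathfrak{U} = \Z/\D$ via Theorem~\ref{d}, where mutation becomes nothing more than the shift functor $\langle 1\rangle$. First I would verify the hypotheses: since $\D \subset \I(\X)$ is functorially finite and $(n+1)$-rigid with $\bigcap_{k=1}^n {}^{\perp_k}\D = \bigcap_{k=1}^n \D^{\perp_k} = \Z$, Lemma~\ref{b} gives $\D \subset \X \subset \Z$ and $\D \subset \Y \subset \Z$, so $\overline{\X}, \overline{\Y}$ are subcategories of $\mathfrak{U}$ and by Theorem~\ref{d} the pair $(\overline{\X},\overline{\Y})$ is an $n$-cotorsion pair in $\mathfrak{U}$ with $\I(\overline{\X}) = \overline{\I(\X)}$.

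Next I would show that the forward mutation is compatible with $\langle 1\rangle$ on $\Z$: for $\M$ with $\D \subset \M \subset \Z$, the defining $\E$-triangle $X \to D \to M \dashrightarrow$ with $f$ a left $\D$-approximation is (up to the quotient) exactly the triangle defining $X\langle 1\rangle$, so $M = X\langle 1\rangle$ in $\mathfrak{U}$; hence $\overline{\mu^{-1}_{\D}(\M)} = \overline{\M}\langle 1\rangle$, and $\mu^{-1}_{\D}(\M)$ indeed sits between $\D$ and $\Z$ (because $\langle 1\rangle$ preserves $\Z/\D$). One must here also check that $\mu^{-1}_{\D}(\M)$ is closed under direct summands and isomorphisms, which follows from the corresponding property of the essential image of $\langle 1\rangle$ together with the fact that direct summands of objects of $\Z$ lie in $\Z$. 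Similarly $\overline{\mu_{\D}(\M)} = \overline{\M}\langle -1\rangle$. Now, since $\langle 1\rangle$ is an autoequivalence of the triangulated category $\mathfrak{U}$, it sends $n$-cotorsion pairs to $n$-cotorsion pairs: from $\overline{\X} = \bigcap_{i=1}^n {}^{\perp}\,\overline{\Y}\langle i\rangle$ one gets $\overline{\X}\langle 1\rangle = \bigcap_{i=1}^n {}^{\perp}\,\overline{\Y}\langle 1\rangle\langle i\rangle$, and an autoequivalence preserves contravariant/covariant finiteness; so $(\overline{\X}\langle 1\rangle, \overline{\Y}\langle 1\rangle) = (\overline{\mu^{-1}_{\D}(\X)}, \overline{\mu^{-1}_{\D}(\Y)})$ is an $n$-cotorsion pair in $\mathfrak{U}$. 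Finally, applying the bijection of Theorem~\ref{d} in reverse, $(\mu^{-1}_{\D}(\X), \mu^{-1}_{\D}(\Y))$ is an $n$-cotorsion pair in $\C$; and its core corresponds to $\I(\overline{\X}\langle 1\rangle) = \I(\overline{\X})\langle 1\rangle = \overline{\I(\X)}\langle 1\rangle = \overline{\mu^{-1}_{\D}(\I(\X))}$, so by injectivity of the correspondence $\I(\mu^{-1}_{\D}(\X)) = \mu^{-1}_{\D}(\I(\X))$. The backward case is dual, using $\langle -1\rangle$.

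The main obstacle I anticipate is the bookkeeping needed to make ``$\overline{\mu^{-1}_{\D}(\M)} = \overline{\M}\langle 1\rangle$'' precise: one must confirm that the $\E$-triangle in Definition~\ref{e} really does realize the shift as constructed in \cite{ZZ1} — in particular that requiring $f$ to be a \emph{left $\D$-approximation} (rather than merely having $D \in \D$) is exactly what pins down $M$ as $X\langle 1\rangle$ in the quotient, and that two different choices of such triangles give isomorphic objects in $\mathfrak{U}$. This amounts to recalling that $(\Z,\Z)$ is a $\D$-mutation pair and that any two left $\D$-approximations of $X$ have isomorphic cones modulo $\D$, which is standard but needs to be stated. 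A secondary point is that $\mu^{-1}_{\D}(\X)$ is \emph{a priori} defined inside all of $\C$, so one must separately note that every object it contains lies in $\Z$: this holds because if $X \to D \to M \dashrightarrow$ with $X, D \in \Z$ then $M \in \Z$ by extension-closedness of $\Z$ (equivalently, Lemma~\ref{4} and its dual applied to $\D$). Once these identifications are in place the rest is a formal transport of structure along the bijection of Theorem~\ref{d}, so no further computation is required.
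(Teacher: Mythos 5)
Your proposal is correct and follows essentially the same route as the paper: reduce to the subfactor triangulated category $\mathfrak{U}=\Z/\D$ via Theorem~\ref{d}, identify $\overline{\mu^{-1}_{\D}(\X)}$ with $\overline{\X}\langle 1\rangle$, note that the shift preserves $n$-cotorsion pairs, and transport back through the bijection. The extra care you take in justifying $\overline{\mu^{-1}_{\D}(\M)}=\overline{\M}\langle 1\rangle$ and the containment $\mu^{-1}_{\D}(\M)\subseteq\Z$ only makes explicit what the paper leaves implicit.
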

\begin{proof}
By Remark \ref{l}, we have that $\I(\X)$ is $(n+1)$-rigid, then so is $\D$. So both $\mu^{-1}_{\D}(\X)$ and $\mu^{-1}_{\D}(\Y)$ are well-defined. We denote $\mu^{-1}_{\D}(\X)$, $\mu^{-1}_{\D}(\Y)$ and $\mu^{-1}_{\D}(\I(\X))$ by
$\X^\prime$,  $\Y^\prime$ and $\I^\prime$ respectively. As in the previous subsection, we denote by
$$\Z=\bigcap\limits_{i=1}^{n}\D^{\perp_i}=\bigcap\limits_{i=1}^{n}{^{\perp_i}}\D$$ and $\mathfrak{U}=\Z/\D$ the subfactor triangulated category. By Theorem \ref{d}, we have that $(\overline{\X},\overline{\Y})$ is an $n$-cotorsion pair in $\mathfrak{U}$ and $\I(\overline{\X})=\overline{\I(\X)}$. Note that its  forward 0-mutation $(\overline{\X}\langle1\rangle,\overline{\Y}\langle1\rangle)$ is also an $n$-cotorsion pair in $\mathfrak{U}$. By the definition of forward (backword) mutation, we have that $\overline{\X}\langle1\rangle=\overline{\X^\prime}$ and $\overline{\Y}\langle1\rangle=\overline{\Y^\prime}$. Then by Theorem \ref{d}, we have that the pair $(\X^\prime,\Y^\prime)$ is an $n$-cotorsion pair in $\C$ and $\I(\overline{\X^\prime})=\overline{\I(\X^\prime)}$. This means that $ \overline{\I^\prime}=\overline{\I(\X)}\langle1\rangle=\I(\overline{\X})\langle1\rangle=
\I(\overline{\X^\prime})=\overline{\I(\X^\prime)}$. Therefore, $\I(\mu^{-1}_{\D}(\X))=\mu^{-1}_{\D}(\I(\X))$.

The assertion for $\I(\mu_{\D}(\X))=\mu_{\D}(\I(\X))$ can be proved similarly.
\end{proof}

\begin{remark}
When $n=1$, Theorem~\ref{main} recovers the classical case and
 coincides with \cite[Theorem 4.3]{ZZ}.
\end{remark}

\section{$n$-cotorsion pairs in $n$-cluster categories of type $A_{\infty}$}
In this section, we provide a geometric description of  $n$-cotorsion pairs in the $n$-cluster category of type $A_{\infty}$, denoted by $\C_n(A_{\infty})$, using specific configurations of $n$-admissible arcs. When $n = 1$, our result recovers and extends the classification of classical cotorsion pairs in cluster categories of type $A_{\infty}$ previously established by Ng~\cite{Ng} and Zhou--Zhou~\cite{ZZ}. As an application, we present a geometric realization of the mutation of $n$-cotorsion pairs through rotations of the corresponding configurations of $n$-admissible arcs.

\subsection{Geometric realization of $\C_n(A_{\infty})$}
Let $k$ be an algebraically closed field, $n$ an integer, and let $\T$ be a $k$-linear algebraic triangulated category that is idempotent complete and classically generated by an $(n+1)$-spherical object. This category was first studied in \cite{J} for the case $n = 1$.

For different values of $n$, the category $\T$ appears in various well-known forms. When $n = -1$, $\T$ is equivalent to $D^c(k[X]/(X^2))$, the compact derived category of the dual numbers. When $n = 0$, it corresponds to $D^f(k[![X]!])$, the derived category of complexes with bounded finite-length homology over the formal power series ring. For $n = 1$, $\T$ is the cluster category of type $A_{\infty}$; see \cite{HJ}. When $n > 1$, it can be interpreted as the $n$-cluster category of type $A_{\infty}$; see \cite{HJ1}. On the other hand, for negative values of $n$, the category $\T$ is less commonly studied. We now recall the definition of $\T$.

\begin{definition}\cite[Definition 1.1]{HJ1}
We say that $\T$ is a $k$-linear algebraic triangulated category with suspension functor $\Sigma$, which is idempotent complete and classically generated by an $(n+1)$-spherical object $s$, if
\[
\dim_k\T(s,\Sigma^{\ell} s)=\left\{
\begin{array}{cl}
1  & \text{if\ $\ell = 0$ or $\ell = n+1$,} \\
0  & \text{otherwise.}
\end{array}
\right.
\]

\end{definition}

From now on, suppose that $n\geq1$ is an integer and $\C_n(A_{\infty})$ is the $n$-cluster categories of type $A_{\infty}$. By \cite[Proposition 1.8]{HJY}, the Serre functor of $\C_n(A_{\infty})$ is $S=\Sigma^{n+1}$, and by \cite[Proposition 1.10]{HJY}, the Auslander-Reiten quiver  of $\C_n(A_{\infty})$
consists of $n$ components, each of which is a copy of $\mathbb{Z}A_{\infty}$, and $\Sigma$ acts cyclically
on the set of components. The $n$ components are denoted by $R,\Sigma R,\cdots,\Sigma^{n-1} R$ and satisfies $\Sigma^{n+i} R=\Sigma^{i} R$ for $i=0,1,\cdots,n-1$.

We recall the geometric realization of $\C_n(A_{\infty})$ based on \cite{HJ1}.
\begin{definition}
\begin{itemize}
  \item[(1)] Let $\Pi$ be an $\infty$-gon. A pair of integers $(t, u)$ as an arc in $\Pi$
connecting the integers $t$ and $u$ with $u-t\geq2$ and $u-t\equiv 1(\mod n)$ is
called an $n$-\emph{admissible arc}.
  \item[(2)] Two  $n$-admissible arcs $(r, s)$ and $(t, u)$ \emph{cross} if $r<t<s<u$ or $t<r<u<s$.
  \item[(3)] Let $\mathfrak{U}$ be a set of $n$-admissible arcs. An integer $t$ is a \emph{left-fountain} of $\mathfrak{U}$ if $\mathfrak{U}$ contains infinitely many $n$-admissible arcs of the form $(s, t)$, and $t$ is a \emph{right-fountain} of $\mathfrak{U}$ if $\mathfrak{U}$ contains infinitely many $n$-admissible arcs of the form $(t, u)$. We say that $t$ is a \emph{fountain} of $\mathfrak{U}$ if it is both a left-fountain and a right-fountain of $\mathfrak{U}$.
\end{itemize}
\end{definition}
The suspension functor $\Sigma$, the Serre funcotor $S$, and the AR translation $\tau=S\Sigma^{-1}$ of $\C_n(A_{\infty})$ act on an $n$-admissible arc $(t, u)$ are given by
$$\Sigma(t, u)=(t-1,u-1),\;\;\;S(t, u)=(t-n-1,u-n-1),\;\;\;\tau(t, u)=(t-n,u-n).$$

\begin{proposition}\cite[Proposition 2.4]{HJ1}
There is a bijective correspondence between $\ind \C_n(A_{\infty})$ and the set of $n$-admissible arcs. Moreover, This extends to a bijective correspondence between subcategories of $\C_n(A_{\infty})$ and subsets of the set of $n$-admissible arcs.
\end{proposition}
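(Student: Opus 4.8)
The statement to prove is the bijective correspondence of \cite[Proposition 2.4]{HJ1}: between the indecomposable objects of $\C_n(A_\infty)$ and the $n$-admissible arcs, extended to a correspondence between subcategories and subsets. Since the excerpt already supplies all the structural input we need --- namely that the Auslander--Reiten quiver of $\C_n(A_\infty)$ has $n$ components, each a copy of $\mathbb{Z}A_\infty$, with $\Sigma$ permuting them cyclically, together with the explicit formulas for $\Sigma$, $S$, and $\tau$ on arcs --- the plan is essentially a bookkeeping argument matching two combinatorially described sets.

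First I would set up the two sides explicitly. On the categorical side, since $\C_n(A_\infty)$ is Krull--Schmidt, $\ind\C_n(A_\infty)$ is the disjoint union of the vertex sets of the $n$ AR-components $R, \Sigma R, \dots, \Sigma^{n-1}R$, and each component being a copy of $\mathbb{Z}A_\infty$ means its vertices can be parametrized by a standard coordinate system on $\mathbb{Z}A_\infty$ (say pairs $(a,b)$ with $a < b$, or the ``$\tau$-orbit times height'' coordinates). On the combinatorial side, an $n$-admissible arc is a pair $(t,u)$ with $u - t \geq 2$ and $u - t \equiv 1 \pmod n$. I would first observe that the residue $u - t \bmod (n+1)$ --- more precisely the value of $(u-t)$ modulo the cyclic action of $\Sigma$, which shifts both endpoints by $1$ and hence is detected by $u-t$ being fixed --- together with a ``height'' $u - t$ and a ``position'' $t$, gives exactly $n$ families of arcs (indexed by the admissible residues of $u-t$), each family in bijection with the vertex set of a $\mathbb{Z}A_\infty$. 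Matching the $i$-th family to the component $\Sigma^i R$ defines the bijection on objects.

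Next I would check this bijection is compatible with the combinatorial structure in the way needed: the $\Sigma$-action on arcs, $(t,u) \mapsto (t-1, u-1)$, must correspond to the suspension functor, and one verifies that iterating it $n+1$ times sends the $i$-th family to itself (consistent with $\Sigma^{n+i}R = \Sigma^i R$ and $\Sigma^{n+1}$ acting within each component), while the AR-translation formula $\tau(t,u) = (t-n, u-n)$ must match the AR-translation on $\mathbb{Z}A_\infty$ --- this is the step where one pins down precisely which coordinate identification to use, forcing the map to be a bijection respecting mesh structure. The extension to subcategories is then formal: subcategories of a Krull--Schmidt category are (for our purposes, additive closures) determined by their sets of indecomposable objects, so a subcategory corresponds to the union of the associated arc sets, and this assignment is manifestly inclusion-preserving and bijective onto all subsets of the arc set.

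The main obstacle is not conceptual but notational: making the coordinate identification between the vertices of $\mathbb{Z}A_\infty$ and the arcs $(t,u)$ completely explicit and checking that $\Sigma$, $S = \Sigma^{n+1}$, and $\tau$ act correctly under it, in particular that the ``$u - t \equiv 1 \pmod n$'' condition is exactly what selects $n$ components rather than $n+1$ or infinitely many. I would handle this by fixing the identification on one component $R$ first (using that a single copy of $\mathbb{Z}A_\infty$ is rigidly coordinatized by its AR-quiver up to the choice of a base vertex), then transporting it to $\Sigma^i R$ via $\Sigma^i$ and comparing with the arc formulas; once the base case is matched, compatibility with $\Sigma$ propagates automatically, and the crossing/non-crossing combinatorics of arcs (needed later for the geometric description of cotorsion pairs) will be seen to encode the relevant $\Hom$ and $\mathbb{E}^k$ vanishing, though that verification belongs to the subsequent subsections rather than to this correspondence itself.
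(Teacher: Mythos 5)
The paper itself offers no proof of this proposition; it is cited verbatim from \cite[Proposition~2.4]{HJ1}, so there is no internal argument to compare yours against. Your overall strategy --- parametrize the $n$ AR components, partition the $n$-admissible arcs into $n$ matching families, fix the identification on $R$ and transport it by $\Sigma$, then pass formally to additive subcategories via sets of indecomposables --- is the natural one and would work.

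However, there is a concrete error in the combinatorial step: you propose to index the $n$ families of arcs by ``the admissible residues of $u-t$'', observing that $u-t$ is invariant under $\Sigma\colon (t,u)\mapsto(t-1,u-1)$. This cannot be right. First, admissibility already forces $u-t\equiv 1\pmod n$ for every arc, so there is only one admissible residue class of $u-t$ modulo $n$, not $n$ of them; moreover, within a single AR component the difference $u-t$ already runs through every admissible value (it is the ``row'' coordinate: in the displayed $n=3$ example the component $R$ contains arcs of difference $4,7,10,13,\dots$, and so do $\Sigma R$ and $\Sigma^2 R$). Second, and more structurally, $\Sigma$ permutes the $n$ components cyclically and transitively, so no $\Sigma$-invariant quantity such as $u-t$ can distinguish them. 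The invariant that actually separates the components is the residue $t\bmod n$ (equivalently $u\bmod n$, since $u\equiv t+1\pmod n$): in the $n=3$ figures, $R$, $\Sigma R$, $\Sigma^2 R$ consist of the arcs with $t\equiv 2,1,0\pmod 3$ respectively, consistent with $\Sigma$ shifting $t$ by $-1$. With this correction the bookkeeping goes through: for fixed $t\bmod n$ the arcs are parametrized by $t\in\mathbb{Z}$ and $k=(u-t-1)/n\geq 1$, which is exactly the vertex set of $\mathbb{Z}A_{\infty}$ with $\tau(t,u)=(t-n,u-n)$ acting as the translation; the remainder of your argument, including the formal extension to subcategories, is fine.
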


For a subcategory $\mathcal{X}$ of $\mathcal{C}_n(A_{\infty})$, we denote the corresponding set of $n$-admissible arcs in $\Pi$ by $\mathfrak{X}$. Conversely, for a set of $n$-admissible arcs $\mathfrak{X}$ in $\Pi$, we denote the corresponding subcategory of $\mathcal{C}_n(A_{\infty})$ by $E(\mathfrak{X})$. When there is no confusion, we sometimes use an $n$-admissible arc $(t, u)$ to represent an indecomposable object $M$ in $\mathcal{C}_n(A_{\infty})$, and write $M = (t, u)$.

\begin{example}
Let $n=3$. The Auslander-Reiten quiver  of $\C_3(A_{\infty})$ contains 3 components $R,\Sigma R$ and $\Sigma^2R$. We draw the  Auslander-Reiten quiver  of  $\C_3(A_{\infty})$, see Figures \ref{figure1}-\ref{figure3}.
\begin{figure}[h]
\[
  \xymatrix @-3.0pc @! {
    & \vdots \ar[dr] & & \vdots \ar[dr] & & \vdots \ar[dr] & & \vdots \ar[dr] & & \vdots \ar[dr] & \\
    \cdots \ar[dr]& & {\scriptstyle (-7,6)} \ar[ur] \ar[dr] & & {\scriptstyle (-4,9)} \ar[ur] \ar[dr] & & {\scriptstyle (-1,12)} \ar[ur] \ar[dr] & & {\scriptstyle (2,15)} \ar[ur] \ar[dr] & & \cdots \\
    & {\scriptstyle (-7,3)} \ar[ur] \ar[dr] & & {\scriptstyle (-4,6)} \ar[ur] \ar[dr] & & {\scriptstyle (-1,9)} \ar[ur] \ar[dr] & & {\scriptstyle (2,12)} \ar[ur] \ar[dr] & & {\scriptstyle (5,15)} \ar[ur] \ar[dr] & \\
    \cdots \ar[ur]\ar[dr]& & {\scriptstyle (-4,3)} \ar[ur] \ar[dr] & & {\scriptstyle (-1,6)} \ar[ur] \ar[dr] & & {\scriptstyle (2,9)} \ar[ur] \ar[dr] & & {\scriptstyle (5,12)} \ar[ur] \ar[dr] & & \cdots\\
    & {\scriptstyle (-4,0)} \ar[ur] & & {\scriptstyle (-1,3)} \ar[ur] & & {\scriptstyle (2,6)} \ar[ur] & & {\scriptstyle (5,9)} \ar[ur] & & {\scriptstyle (8,12)} \ar[ur] & \\
               }
\]
\caption{The component $R$ of the Auslander-Reiten quiver  of $\C_3(A_{\infty})$.}
\label{figure1}
\end{figure}
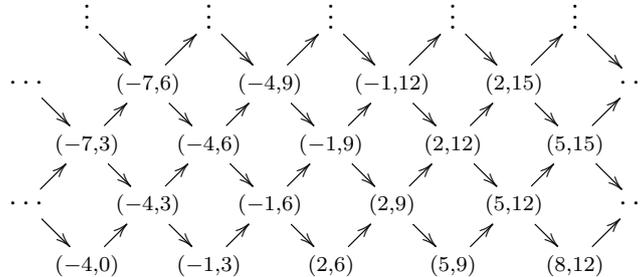
\begin{figure}[h]
\[
  \xymatrix @-3.0pc @! {
    & \vdots \ar[dr] & & \vdots \ar[dr] & & \vdots \ar[dr] & & \vdots \ar[dr] & & \vdots \ar[dr] & \\
    \cdots \ar[dr]& & {\scriptstyle (-8,5)} \ar[ur] \ar[dr] & & {\scriptstyle (-5,8)} \ar[ur] \ar[dr] & & {\scriptstyle (-2,11)} \ar[ur] \ar[dr] & & {\scriptstyle (1,14)} \ar[ur] \ar[dr] & & \cdots \\
    & {\scriptstyle (-8,2)} \ar[ur] \ar[dr] & & {\scriptstyle (-5,5)} \ar[ur] \ar[dr] & & {\scriptstyle (-2,8)} \ar[ur] \ar[dr] & & {\scriptstyle (1,11)} \ar[ur] \ar[dr] & & {\scriptstyle (4,14)} \ar[ur] \ar[dr] & \\
    \cdots \ar[ur]\ar[dr]& & {\scriptstyle (-5,2)} \ar[ur] \ar[dr] & & {\scriptstyle (-2,5)} \ar[ur] \ar[dr] & & {\scriptstyle (1,8)} \ar[ur] \ar[dr] & & {\scriptstyle (4,11)} \ar[ur] \ar[dr] & & \cdots\\
    & {\scriptstyle (-5,-1)} \ar[ur] & & {\scriptstyle (-2,2)} \ar[ur] & & {\scriptstyle (1,5)} \ar[ur] & & {\scriptstyle (4,8)} \ar[ur] & & {\scriptstyle (7,11)} \ar[ur] & \\
               }
\]

\caption{The component $\Sigma R$ of the Auslander-Reiten quiver  of $\C_3(A_{\infty})$.}
\label{figure2}
\end{figure}
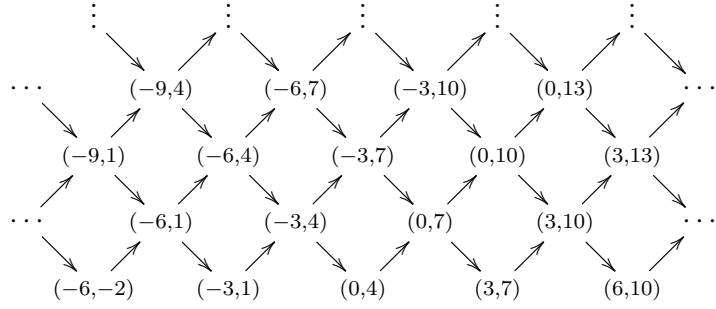
\begin{figure}[h]
\[
  \xymatrix @-3.0pc @! {
    & \vdots \ar[dr] & & \vdots \ar[dr] & & \vdots \ar[dr] & & \vdots \ar[dr] & & \vdots \ar[dr] & \\
    \cdots \ar[dr]& & {\scriptstyle (-9,4)} \ar[ur] \ar[dr] & & {\scriptstyle (-6,7)} \ar[ur] \ar[dr] & & {\scriptstyle (-3,10)} \ar[ur] \ar[dr] & & {\scriptstyle (0,13)} \ar[ur] \ar[dr] & & \cdots \\
    & {\scriptstyle (-9,1)} \ar[ur] \ar[dr] & & {\scriptstyle (-6,4)} \ar[ur] \ar[dr] & & {\scriptstyle (-3,7)} \ar[ur] \ar[dr] & & {\scriptstyle (0,10)} \ar[ur] \ar[dr] & & {\scriptstyle (3,13)} \ar[ur] \ar[dr] & \\
    \cdots \ar[ur]\ar[dr]& & {\scriptstyle (-6,1)} \ar[ur] \ar[dr] & & {\scriptstyle (-3,4)} \ar[ur] \ar[dr] & & {\scriptstyle (0,7)} \ar[ur] \ar[dr] & & {\scriptstyle (3,10)} \ar[ur] \ar[dr] & & \cdots\\
    & {\scriptstyle (-6,-2)} \ar[ur] & & {\scriptstyle (-3,1)} \ar[ur] & & {\scriptstyle (0,4)} \ar[ur] & & {\scriptstyle (3,7)} \ar[ur] & & {\scriptstyle (6,10)} \ar[ur] & \\
               }
\]

\caption{The component $\Sigma^2 R$ of the Auslander-Reiten quiver  of $\C_3(A_{\infty})$.}
\label{figure3}
\end{figure}
\end{example}
Now we recall the $\Hom$-space, or equivalently, the $\Ext$-space of two objects in $\C_n(A_{\infty})$ based on \cite{HJ1}.

\begin{definition}
Suppose that $x$ is an indecomposable object in $\ind( \C_n(A_{\infty})$.  Figure \ref{figure4} defines two
infinite sets $F^{\pm}( x )$ consisting of vertices in the same
component of the AR quiver as $x$.  Each set contains $x$ and all
other vertices inside the indicated boundaries, and the boundaries are
included in the sets.
\begin{figure}[h]
\[
\vcenter{
  \xymatrix @-3.0pc @! {
    &&&*{} &&&&&&&& \\
    &&&& *{} \ar@{--}[ul] & & & & *{} \ar@{--}[ur] \\
    &*{}&& F^-(x) & & & & & & F^+(x) && *{}\\
    &&*{}\ar@{--}[ul]& & & & {x} \ar@{-}[ddll] \ar@{-}[uull] \ar@{-}[ddrr] \ar@{-}[uurr]& & &&*{}\ar@{--}[ur]&\\
    && \\
    *{}\ar@{--}[r]&*{} \ar@{-}[rrr] && & *{}\ar@{-}[uull]\ar@{-}[rrrrrr]& & & & \ar@{-}[uurr]\ar@{-}[rrr]&&&*{}\ar@{--}[r]&*{}\\
           }
}
\]
\caption{The sets $F^{\pm}(x)$.}
\label{figure4}
\end{figure}
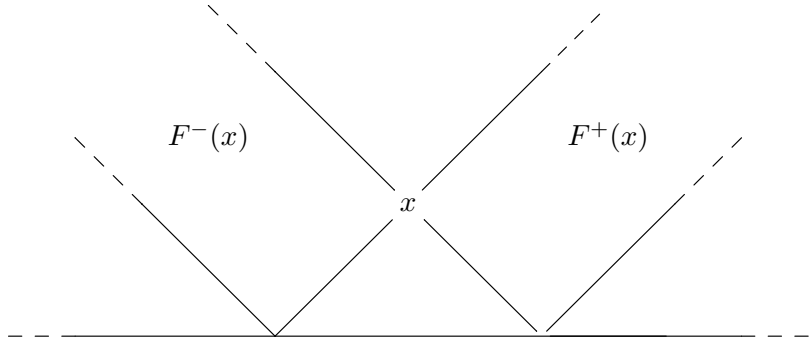
\end{definition}
\begin{lemma}\cite[Proposition 2.6]{HJ1}.
\label{cc}
Let $x, y \in \ind( \C_n(A_{\infty}) )$.  Then
\[
  \dim_k \C_n(A_{\infty})( x , y ) =
  \left\{
    \begin{array}{cl}
      1 & \mbox{for $y \in F^+( x ) \cup F^-( Sx )$}, \\[2pt]
      0 & \mbox{otherwise},
    \end{array}
  \right.
\]
where $S=\Sigma^{n+1}$ is the Serre functor of $\C_n(A_{\infty})$.
\end{lemma}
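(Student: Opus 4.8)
The plan is to compute these Hom-spaces by knitting along the Auslander--Reiten quiver of $\C_n(A_{\infty})$, using the Serre functor $S=\Sigma^{n+1}$ (whence the Auslander--Reiten translation is $\tau=\Sigma^{n}$) together with the identity $\Sigma\tau=S$. Since $\C_n(A_{\infty})$ is Hom-finite, Krull--Schmidt and has a Serre functor, every indecomposable $y$ sits in an Auslander--Reiten triangle
\[
\tau y\longrightarrow E_y\longrightarrow y\longrightarrow \Sigma\tau y ,
\]
where $E_y$ is the direct sum of the immediate predecessors of $y$ in its $\mathbb{Z}A_{\infty}$-component; $E_y$ is indecomposable exactly when $y$ is a shortest arc $(t,t+n+1)$, i.e. lies on the mouth of its component. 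The mouth objects are precisely the shifts $\Sigma^{m}s$ of the spherical generator, so the defining property of $s$ computes all Hom-spaces between mouth objects, $\dim_k\Hom(\Sigma^{a}s,\Sigma^{b}s)$ being $1$ for $b-a\in\{0,n+1\}$ and $0$ otherwise; in particular $\End(y)=k$ for every mouth object. This is the seed of the induction.

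Fixing an indecomposable $x$, I would induct outward, over all $n$ components simultaneously, ordered by distance to the mouth. Applying $\Hom(x,-)$ to the Auslander--Reiten triangle of $y$ produces a long exact sequence whose relevant stretch is
\[
\Hom(x,\Sigma^{-1}y)\to\Hom(x,\tau y)\to\Hom(x,E_y)\to\Hom(x,y)\to\Hom(x,\Sigma\tau y)\to\Hom(x,\Sigma E_y) ,
\]
and here $\Sigma\tau y=Sy$, so the connecting term is governed by Serre duality, $\Hom(x,Sy)\cong D\,\Hom(y,x)$, while $\Hom(x,\Sigma^{-1}y)$ lives in a neighbouring component and is supplied by the same induction; the two flanking maps are pinned down by the mesh relations. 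Running this, and using the other form of Serre duality $\dim_k\Hom(x,y)=\dim_k\Hom(y,Sx)$ to migrate a computation from the component of $x$ to that of $Sx$, one reads off that $\dim_k\Hom(x,y)$ equals $1$ precisely when $y$ lies either in the forward hammock $F^{+}(x)$ inside the component of $x$, or in the backward hammock $F^{-}(Sx)$ inside the component of $Sx$, and $0$ otherwise; the shapes in Figure~\ref{figure4} are exactly the forward and backward hammocks of $\mathbb{Z}A_{\infty}$, bounded by the two rays through the vertex and reflected at the mouth. In particular $\End(y)=k$ for all indecomposable $y$, so every indecomposable is a brick, which is what legitimizes the use of the mesh relations.

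The delicate point, and where I expect the real work to be, is to ensure that this simultaneous knitting terminates and is complete, i.e. that no spurious nonzero classes appear; equivalently, one must show $\Hom(x,z)=0$ whenever $z$ lies in a component other than those of $x$ and of $Sx$ (for $n\ge 2$ these are two distinct components, $\Sigma$-adjacent in the cyclic order, and for $n=1$ they coincide). I would isolate this as a vanishing lemma. The most robust route to it is to invoke the description $\C_n(A_{\infty})\simeq\operatorname{perf}\!\bigl(k[\varepsilon]/(\varepsilon^{2})\bigr)$ with $\varepsilon$ placed in cohomological degree $n+1$ (this graded algebra being the endomorphism algebra of $s$), in which the indecomposables are explicit finite twisted complexes and the Hom-dimensions can be computed directly and then matched with the arc combinatorics; alternatively one argues more by hand with the spherical generator, tracking how the concentration of $\Hom(s,\Sigma^{\bullet}s)$ in degrees $0$ and $n+1$ restricts the cyclic "spread" of nonzero morphisms. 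A shorter alternative to the whole proof is to take the geometric realization of $\C_n(A_{\infty})$ as given and simply verify the formula on arcs; but the knitting argument above has the advantage of using only the data actually at hand, namely the Auslander--Reiten structure and the $(n+1)$-spherical generator.
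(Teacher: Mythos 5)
First, note that the paper does not prove this statement at all: Lemma~\ref{cc} is quoted verbatim from \cite[Proposition~2.6]{HJ1}, where it is established by computing morphism spaces in an explicit dg model for the category generated by the $(n+1)$-spherical object (essentially $D^f(k[T])$, the Koszul dual of the $\operatorname{perf}(k[\varepsilon]/(\varepsilon^2))$ description you mention). So your knitting argument is a genuinely different route, and its skeleton is sound: the mesh identity $\dim\Hom(x,E_y)=\dim\Hom(x,\tau y)+\dim\Hom(x,y)-[x\cong y]-[\Sigma x\cong y]$, obtained from the long exact sequence of the Auslander--Reiten triangle together with the lifting property of AR triangles, does determine $\dim\Hom(x,-)$ on an entire $\mathbb{Z}A_\infty$ component from its values on the mouth, row by row.

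However, as written the argument has three concrete gaps. (i) \emph{Base case.} Your induction fixes an arbitrary $x$ and starts at the mouth, but the spherical condition only gives you $\Hom(\Sigma^a s,\Sigma^b s)$; for $x$ not on the mouth the values $\Hom(x,\Sigma^b s)$ are unknown at the start. You need a second induction on the distance of $x$ from the mouth (via AR triangles in the first variable, or via Serre duality $\Hom(x,y)\cong D\Hom(y,Sx)$ applied systematically), and this is gestured at but never set up; without it the Serre-duality term $\Hom(x,Sy)\cong D\Hom(y,x)$ in your long exact sequence is circular, since $\Hom(y,x)$ is not yet known at that stage. (ii) \emph{Identification of the support.} That the recursion, seeded by the mouth values $1$ in degrees $0$ and $n+1$, outputs exactly the regions $F^+(x)$ and $F^-(Sx)$ of Figure~\ref{figure4} is the actual computational content of the lemma, and you only assert that "one reads off" this. (iii) \emph{The vanishing lemma.} You correctly isolate the cross-component vanishing as the delicate point but then defer it to an unexecuted appeal to the dg model --- which is precisely the proof in \cite{HJ1}, so at that point nothing has been gained. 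In fact this appeal is unnecessary: once (i) is repaired, the mouth values of $\Hom(x,-)$ vanish identically on every component other than those of $x$ and $Sx$, the correction terms $[x\cong y]$ and $[\Sigma x\cong y]$ are supported only in those two components as well, and the mesh recursion then forces $\Hom(x,-)=0$ on all remaining components. So the strategy can be completed entirely within the AR-theoretic framework, but the proposal in its current form is an outline rather than a proof.
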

\subsection{$n$-cotorsion pairs in $\C_n(A_{\infty})$}
In this subsection, we give a geometric characterization of $n$-cotorsion pairs in $\C_n(A_{\infty})$. The following results are needed.
\begin{lemma}\cite[Proposition 3.4]{HJ1}
\label{aa}
Let $\X$ be a subcategory of $\C_n(A_{\infty})$ and let $\XX$ be the corresponding
set of $n$-admissible arcs.  The following conditions are equivalent.
\begin{enumerate}

  \item[\rm (1)]  The subcategory $\X$ is contravariantly finite.

\smallskip

  \item[\rm (2)]  Each right-fountain of $\XX$ is a left-fountain of $\XX$.

\end{enumerate}
\end{lemma}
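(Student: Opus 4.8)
The statement to be proved is Lemma~\ref{aa} (the geometric criterion for contravariant finiteness due to Holm--J{\o}rgensen), so the plan is to reconstruct its proof from the combinatorial description of $\Hom$-spaces in $\C_n(A_{\infty})$ given by Lemma~\ref{cc}, together with the translation between subcategories and sets of $n$-admissible arcs. Throughout, I identify an indecomposable object with its arc $(t,u)$ and recall that, by Lemma~\ref{cc}, $\C_n(A_{\infty})((r,s),(t,u)) \neq 0$ precisely when $(t,u) \in F^+(r,s) \cup F^-(S(r,s))$; geometrically this means either the arcs are ``nested/linked'' in one of a few explicitly controlled ways. The key reduction is that a right $\X$-approximation of an indecomposable $M$ exists if and only if the functor $\bigoplus$ of the relevant $\Hom$-spaces out of $\X$ into $M$ is finitely generated over $\End$, which by Krull--Schmidt and Hom-finiteness reduces to: there is a \emph{finite} subset $\X_0 \subseteq \X$ through which every map $X \to M$ ($X \in \X$) factors.

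\textbf{Step 1: (1) $\Rightarrow$ (2), contrapositive.} Suppose some integer $t$ is a right-fountain of $\XX$ but not a left-fountain. Then $\XX$ contains infinitely many arcs $(t, u_1), (t, u_2), \dots$ with $u_j \to \infty$, but only finitely many arcs ending at $t$. I would fix an indecomposable $M$ positioned so that $\C_n(A_{\infty})((t,u_j), M) \neq 0$ for infinitely many $j$ — concretely, choosing $M = (t, t+1+n\ell)$ for suitable $\ell$, or an object in the appropriate component ``just below'' the fountain, so that each $(t,u_j)$ with $u_j$ large maps nontrivially to $M$ via the fountain structure. The point is then to show that no finite subset of $\X$ can realize a right $\X$-approximation of this $M$: given any finite $\{X_1,\dots,X_m\} \subseteq \X$, one produces some $(t,u_j)$ with $j$ large whose nonzero map to $M$ does not factor through $X_1 \oplus \dots \oplus X_m$, using that factorization through an arc $(t,u_i)$ forces a containment relation on the second coordinates ($u_j \le u_i$ roughly), which fails once $u_j$ exceeds all the $u_i$, and factorization through arcs \emph{not} emanating from $t$ is obstructed because $t$ is not a left-fountain (there are only finitely many arcs into $t$, and a long arc from $t$ cannot factor through a short arc). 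Hence $\X$ is not contravariantly finite.

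\textbf{Step 2: (2) $\Rightarrow$ (1).} Assume every right-fountain of $\XX$ is a left-fountain. Take an arbitrary indecomposable $M = (t,u) \in \C_n(A_{\infty})$; by additivity it suffices to approximate indecomposables. Consider the set $\X_M$ of arcs $X \in \XX$ with $\C_n(A_{\infty})(X, M) \neq 0$; by Lemma~\ref{cc} these all lie in $F^-(M) \cup F^-(SM)^{\,\vee}$-type regions, i.e.\ a union of finitely many ``triangular'' zones in the AR quiver. The geometry shows $\X_M$ can be infinite only through a fountain-like accumulation, and the hypothesis forces any such accumulation at an integer $v$ to be two-sided. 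One then shows that a suitable finite subcollection — finitely many ``outermost'' arcs, together with, for each fountain $v$ involved, a single arc realizing the left-fountain direction at $v$ — generates all maps into $M$: every $X \to M$ factors through a composite $X \to X' \to M$ with $X'$ in this finite set, using that at a genuine fountain $v$ the left-fountain arcs $(w,v) \in \XX$ allow a map $(v, u') \to M$ to be rerouted. Assembling the direct sum of this finite set yields a right $\X$-approximation, so $\X$ is contravariantly finite.

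\textbf{Main obstacle.} The delicate point is Step~2: controlling exactly which maps into $M$ can be infinitely many, and verifying that the fountain condition is \emph{sufficient} to collapse them to a finitely generated family. One must carefully enumerate, via Lemma~\ref{cc}, the finitely many regions of the AR quiver that map into a fixed $M$, identify the (at most finitely many) integers at which $\XX$ could accumulate as right-fountains, and then show the left-fountain property at each such integer lets one ``absorb'' the tail of arcs through a single chosen arc. Managing the factorization of morphisms in $\C_n(A_{\infty})$ — which is governed by the mesh/additivity structure on $n$ copies of $\mathbb{Z}A_\infty$ and the action of $\Sigma$ and $S = \Sigma^{n+1}$ permuting components — is where the bookkeeping is heaviest, since a map $X \to M$ may pass between AR components and one needs the Serre-duality term $F^-(Sx)$ in Lemma~\ref{cc} to track it; everything else is a routine consequence of Krull--Schmidt and Hom-finiteness of $\C_n(A_{\infty})$.
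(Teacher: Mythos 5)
First, a framing remark: the paper does not prove Lemma~\ref{aa} at all --- it is quoted verbatim from \cite[Proposition 3.4]{HJ1} --- so there is no internal argument to compare yours against, and your proposal has to stand on its own as a reconstruction of Holm--J{\o}rgensen's proof. Your overall strategy (contrapositive via a test object for (1)$\Rightarrow$(2), explicit construction of a finite approximating family for (2)$\Rightarrow$(1)) is the right shape, but there are two genuine gaps.

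In Step 1 your concrete choice of test object does not work. The arc $M=(t,t+1+n\ell)$ lies in the same AR component as the arcs $(t,u_j)$, and from Lemma~\ref{cc} one computes that $(t,m)\in F^{+}\bigl((t,u_j)\bigr)$ if and only if $m\geq u_j$, while $(t,m)\in F^{-}\bigl(S(t,u_j)\bigr)$ is impossible because the first coordinates would have to satisfy $t\leq t-n-1$. Hence only the \emph{finitely many} arcs $(t,u_j)$ with $u_j\leq t+1+n\ell$ admit a nonzero map to your $M$, and the contradiction you are aiming for evaporates. The test object must instead be chosen so that it lies in $F^{-}\bigl(S(t,u_j)\bigr)$ for all large $u_j$, i.e.\ in the Serre-shifted component with first coordinate congruent to $t-n-1$ modulo $n$ and at most $t-n-1$ (for instance an arc of the form $(t-n-1,b)$); then indeed $\Hom\bigl((t,u_j),M\bigr)\neq 0$ for all $u_j\geq b+n+1$. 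You half-acknowledge this with ``or an object in the appropriate component just below the fountain,'' but the argument as written hinges on a named choice that fails. Even after correcting $M$, the assertion that no finite subfamily of $\X$ factors all of these maps is only gestured at (``$u_j\le u_i$ roughly'').

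Step 2 is where the substance of the cited proposition lives, and it is asserted rather than proved. The central claim --- that for each fountain $v$ a \emph{single} arc in the left-fountain direction absorbs the infinite tail of maps $(v,u')\to M$, so that finitely many ``outermost'' arcs plus one arc per fountain give a right $\X$-approximation --- is exactly the factorization statement one must establish by tracking morphisms through the $F^{\pm}$ regions and across the $n$ components permuted by $\Sigma$ and $S$. You explicitly defer this (``Main obstacle''), so what you have is a plan for a proof with one concrete error in Step 1 and its decisive step in Step 2 still outstanding.
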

Dually, the following result holds.
\begin{lemma}\cite[Proposition 3.5]{HJ1}
\label{bb}
Let $\X$ be a subcategory of $\C_n(A_{\infty})$ and let $\XX$ be the corresponding
set of $n$-admissible arcs.  The following conditions are equivalent.
\begin{enumerate}

  \item[\rm (1)]  The subcategory $\X$ is covariantly finite.

\smallskip

  \item[\rm (2)]  Each left-fountain of $\XX$ is a right-fountain of $\XX$.

\end{enumerate}
\end{lemma}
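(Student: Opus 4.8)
The plan is to obtain Lemma~\ref{bb} as the formal dual of Lemma~\ref{aa}, by passing to the opposite category so that ``contravariantly finite'' is exchanged with ``covariantly finite'' and ``right-fountain'' with ``left-fountain''. First I would check that $\C_n(A_{\infty})^{\mathrm{op}}$ is again a $k$-linear algebraic, idempotent complete triangulated category classically generated by an $(n+1)$-spherical object: the opposite of an algebraic triangulated category is algebraic (take the opposite dg enhancement), its suspension is $\Sigma^{-1}$, and for the spherical generator $s$ one has $\C_n(A_{\infty})^{\mathrm{op}}(s,\Sigma^{-\ell}s)=\C_n(A_{\infty})(\Sigma^{-\ell}s,s)\cong\C_n(A_{\infty})(s,\Sigma^{\ell}s)$, which has dimension $1$ exactly for $\ell\in\{0,n+1\}$ and $0$ otherwise. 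Hence $\C_n(A_{\infty})^{\mathrm{op}}$ satisfies \cite[Definition~1.1]{HJ1} and is therefore equivalent to $\C_n(A_{\infty})$.

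Next I would transport the geometric model of \cite{HJ1} through this equivalence. Reversing all arrows of the Auslander--Reiten quiver and relabelling the vertices of the $\infty$-gon by $i\mapsto -i$ matches the action formulas $\Sigma(t,u)=(t-1,u-1)$, $S(t,u)=(t-n-1,u-n-1)$, $\tau(t,u)=(t-n,u-n)$ with the corresponding formulas in $\C_n(A_{\infty})^{\mathrm{op}}$; under it an $n$-admissible arc $(t,u)$ of $\XX$ corresponds to the $n$-admissible arc $(-u,-t)$ of the reflected set $\overline{\XX}$, two arcs cross in $\XX$ iff their reflections cross in $\overline{\XX}$, an integer $t$ is a left-fountain of $\XX$ iff $-t$ is a right-fountain of $\overline{\XX}$, and $t$ is a right-fountain of $\XX$ iff $-t$ is a left-fountain of $\overline{\XX}$. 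Moreover a subcategory $\X\subseteq\C_n(A_{\infty})$ is covariantly finite iff the corresponding subcategory of $\C_n(A_{\infty})^{\mathrm{op}}$ is contravariantly finite. Applying Lemma~\ref{aa} in $\C_n(A_{\infty})^{\mathrm{op}}$ to $\overline{\XX}$ and translating both conditions back through the reflection then yields precisely: $\X$ is covariantly finite if and only if each left-fountain of $\XX$ is a right-fountain of $\XX$, which is Lemma~\ref{bb}.

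I expect the only real difficulty to be the bookkeeping in the transport step: one must verify that the equivalence $\C_n(A_{\infty})^{\mathrm{op}}\simeq\C_n(A_{\infty})$ genuinely carries the arc model to its mirror image, i.e.\ that the three action formulas above are simultaneously respected by the relabelling $i\mapsto -i$, and that crossing and the two fountain notions are interchanged correctly. A clean alternative that avoids identifying the opposite category is simply to rerun the proof of Lemma~\ref{aa} from \cite[Proposition~3.4]{HJ1} with every inequality among integers reversed: the only external input is the $\Hom$-formula of Lemma~\ref{cc}, which is invariant under $i\mapsto -i$, so each step dualizes verbatim. Either route reduces Lemma~\ref{bb} to Lemma~\ref{aa} with no new conceptual content.
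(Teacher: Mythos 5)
Your proposal is correct and takes essentially the same route as the paper: the paper gives no argument of its own for Lemma~\ref{bb}, citing it as \cite[Proposition~3.5]{HJ1} and introducing it merely with ``Dually'' after Lemma~\ref{aa}, and the opposite-category/reflection argument you spell out (or, equivalently, rerunning the proof of Lemma~\ref{aa} with all inequalities reversed) is precisely the duality being invoked.
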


By Lemma \ref{cc} and Proposition 2.8 in \cite{HJ1}, we have the following result.

\begin{lemma}\label{g}
Suppose that $u, v$ are two $n$-admissible arcs, and $M_u, M_v$ are the corresponding indecomposable objects in $\C_n(A_{\infty})$. Then $u$ does not cross $v$ if and only if  $\Ext^{i}_{\C_n(A_{\infty})}(M_u,M_v)=0$ for all $1\leq i\leq n$.
\end{lemma}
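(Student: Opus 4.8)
The plan is to translate the vanishing of higher extension groups into a combinatorial statement about crossing of arcs, using the explicit Hom-dimension formula of Lemma~\ref{cc} together with the fact that $\Ext^i(M,N)=\C_n(A_\infty)(M,\Sigma^i N)$ and that $\Sigma$ acts on arcs by the shift $(t,u)\mapsto(t-1,u-1)$. First I would recall that, by definition of the higher extension groups in a triangulated category, $\Ext^i_{\C_n(A_\infty)}(M_u,M_v)\cong\C_n(A_\infty)(M_u,\Sigma^i M_v)$ for $1\le i\le n$, and that $\Sigma^i M_v$ is the indecomposable corresponding to the arc $v-i:=(t_v-i,u_v-i)$. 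Thus the claim becomes: $u$ does not cross $v$ if and only if $\C_n(A_\infty)(M_u,M_{v-i})=0$ for all $1\le i\le n$. By Lemma~\ref{cc}, this Hom-space is nonzero precisely when $M_{v-i}\in F^+(M_u)\cup F^-(SM_u)$, where $S=\Sigma^{n+1}$, so I must show that some $M_{v-i}$ ($1\le i\le n$) lies in $F^+(M_u)\cup F^-(SM_u)$ exactly when $u$ and $v$ cross.

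The heart of the argument is a direct translation of the regions $F^{\pm}$ into inequalities on arc endpoints. Writing $u=(r,s)$ and $v=(t,u')$ with the congruence conditions $s-r\equiv u'-t\equiv 1\pmod n$ and both gaps $\ge 2$, I would use Proposition~2.8 of \cite{HJ1} (already invoked in the statement) — which describes $F^+(M_u)$ as the set of arcs whose endpoints interlace those of $(r,s)$ in a prescribed one-sided way, and $F^-(SM_u)$ the mirror-image region — to express ``$M_{v-i}\in F^+(M_u)\cup F^-(SM_u)$ for some $1\le i\le n$'' as a disjunction of endpoint inequalities. The key numerical observation is that as $i$ ranges over $1,\dots,n$ the shifted arcs $v-i$ sweep through exactly one full residue-class period, so that the existence of a suitable $i$ depends only on the cyclic position of $(t,u')$ relative to $(r,s)$, not on a particular representative; this is where the congruence $u'-t\equiv 1\pmod n$ is used in an essential way. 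Carrying out this bookkeeping should yield that the disjunction holds if and only if $r<t<s<u'$ or $t<r<u'<s$, which is precisely the crossing condition from Definition of crossing for $n$-admissible arcs.

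I expect the main obstacle to be the careful case analysis in the second step: one must handle the two components $F^+(M_u)$ and $F^-(SM_u)$ separately, keep track of which component of the AR quiver $\Sigma^i M_v$ lands in (recall $\Sigma$ permutes the $n$ components cyclically, so only certain $i$ put $v-i$ in the same component as $u$, and Lemma~\ref{cc} gives $0$ otherwise), and verify that the ``non-crossing'' arcs are exactly those for which \emph{every} admissible shift misses both regions. A clean way to organize this is to first dispose of the trivial cases (when $u,v$ lie in different components for all relevant shifts, or share an endpoint), then treat nested arcs ($r\le t<u'\le s$ or $t\le r<s\le u'$) and disjoint arcs ($s\le t$ or $u'\le r$) — showing in both families that all $n$ Hom-spaces vanish — and finally treat the genuinely crossing case, exhibiting the unique $i$ for which $M_{v-i}$ lands in $F^+(M_u)$ or $F^-(SM_u)$. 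Since the geometry is symmetric under the mirror reflection of $\Pi$, roughly half of these verifications follow formally from the other half, which keeps the write-up manageable.
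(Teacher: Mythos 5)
Your proposal is correct and follows essentially the same route as the paper: the paper gives no written-out argument at all, simply deducing the lemma from Lemma~\ref{cc} together with Proposition~2.8 of \cite{HJ1}, which is exactly the combination (Hom-dimension formula for $F^{\pm}$ plus the endpoint description of those regions applied to the shifts $\Sigma^i M_v$, $1\leq i\leq n$) that you propose to carry out in detail. Your outline of the case analysis is a reasonable expansion of that citation and contains no gap in strategy.
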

Let  $\mathfrak{X}$ be a set of $n$-admissible arcs in $\Pi$. We define
$$\nc\mathfrak{X}=\{u\in\Pi\text{\;is\;an\;} n\text{-admissible\; arc\;}|\;u \text{\;does\;not\;cross\;any\;} n\text{-admissible\; arc\;in\;}\mathfrak{X}\}.$$
By Lemma \ref{g}, $\nc\mathfrak{X}$ corresponds to the subcategory $\bigcap\limits_{i=1}^{n}E(\mathfrak{X})[-i]^\perp=\bigcap\limits_{i=1}^{n}{^\bot}E(\mathfrak{X})[i]$.
\vspace{2mm}

We are ready to give the geometric characterization of $n$-cotorsion pairs in $\C_n(A_{\infty})$.
\begin{theorem}\label{dd}
Let $\X,\Y$ be subcategories of $\C_n(A_{\infty})$ and $\XX,\YY$ be the corresponding sets of $n$-admissible arcs. Then the following statements are equivalent.
\begin{enumerate}

  \item[\rm (1)]  $(\X,\Y)$ is an $n$-cotorsion pair.

\smallskip

  \item[\rm (2)] $\XX=\nc\YY$, $\YY=\nc\XX$,  each right-fountain of $\XX$ is a left-fountain of $\XX$, and each left-fountain of $\YY$ is a right-fountain of $\YY$.
\end{enumerate}
\end{theorem}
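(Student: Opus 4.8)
The plan is to verify the equivalence clause by clause, matching each of the three conditions in Definition~\ref{aaa} with the corresponding combinatorial condition in statement~(2), using the dictionary between $\C_n(A_{\infty})$ and configurations of $n$-admissible arcs assembled above.

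First I would translate the orthogonality conditions. Since the crossing relation on $n$-admissible arcs is symmetric, Lemma~\ref{g} shows that for arcs $u,v$ with corresponding indecomposables $M_u,M_v$, the arc $u$ does not cross $v$ if and only if $\E^i(M_u,M_v)=0$ for all $1\le i\le n$, equivalently if and only if $\E^i(M_v,M_u)=0$ for all $1\le i\le n$. Hence $\nc\YY$ is exactly the set of arcs attached to the subcategory $\bigcap_{k=1}^{n}{}^{\perp_k}\Y$ (and equally to $\bigcap_{k=1}^{n}\Y^{\perp_k}$), and symmetrically $\nc\XX$ is attached to $\bigcap_{k=1}^{n}\X^{\perp_k}$. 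Because the correspondence between subcategories of $\C_n(A_{\infty})$ and sets of $n$-admissible arcs is a bijection \cite[Proposition~2.4]{HJ1}, this yields at once that $\X=\bigcap_{k=1}^{n}{}^{\perp_k}\Y$ if and only if $\XX=\nc\YY$, and $\Y=\bigcap_{k=1}^{n}\X^{\perp_k}$ if and only if $\YY=\nc\XX$.

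Next I would treat the finiteness conditions. By Lemma~\ref{aa}, $\X$ is contravariantly finite in $\C_n(A_{\infty})$ if and only if every right-fountain of $\XX$ is a left-fountain of $\XX$; dually, by Lemma~\ref{bb}, $\Y$ is covariantly finite if and only if every left-fountain of $\YY$ is a right-fountain of $\YY$. Noting in addition that any subcategory of $\C_n(A_{\infty})$ coming from a set of arcs is automatically closed under direct summands and isomorphisms, since $\C_n(A_{\infty})$ is Krull--Schmidt, the three conditions of Definition~\ref{aaa} for the pair $(\X,\Y)$ hold simultaneously precisely when all four conditions in~(2) hold. Combining the two preceding paragraphs then gives the equivalence of (1) and (2).

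I expect no serious obstacle here, as the content is packaged in Lemmas~\ref{g}, \ref{aa}, and~\ref{bb}. The only point demanding a little care is the appeal to the symmetry of the crossing relation, which is what allows a single non-crossing requirement to encode simultaneously the left-orthogonality description $\X=\bigcap_{k=1}^{n}{}^{\perp_k}\Y$ and the right-orthogonality description $\Y=\bigcap_{k=1}^{n}\X^{\perp_k}$; one should also check that the two fountain conditions are assigned to the correct sides, namely right-fountain $\Rightarrow$ left-fountain for $\XX$ (corresponding to contravariant finiteness of $\X$) and left-fountain $\Rightarrow$ right-fountain for $\YY$ (corresponding to covariant finiteness of $\Y$).
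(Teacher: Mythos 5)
Your proposal is correct and follows exactly the route the paper intends: the paper's own proof is a one-line appeal to Definition~\ref{aaa} together with Lemmas~\ref{g}, \ref{aa} and~\ref{bb}, and your write-up simply fills in the details, including the (correct) observation that the symmetry of the crossing relation makes a single non-crossing condition encode both orthogonality requirements.
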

\begin{proof}
It is easy to check by Definition \ref{aaa}, Lemma \ref{aa}, Lemma \ref{bb}, and Lemma \ref{g}.
\end{proof}

We give a definition of Ptolemy diagram and obtain a necessary condition for an $n$-cotorsion pair.

\begin{definition}
Let $\mathfrak{X}$ be a set of $n$-admissible arcs in $\Pi$.  If for any two crossing $n$-admissible arcs $u=(r,s)$ and $v=(t,u)$ in $\mathfrak{X}$, those of $(r,t),(r,u),(s,t),(s,u)$, which are $n$-admissible arcs are in $\mathfrak{X}$, then $\mathfrak{X}$ is called a \emph{Ptolemy diagram}.
\end{definition}
\begin{lemma}\label{j}
Let $\X$ be a subcategory of $\C_n(A_{\infty})$, and $\XX$ be the corresponding set of $n$-admissible arcs. If $\XX=\nc\nc\XX$, then $\XX$ is a Ptolemy diagram.
\end{lemma}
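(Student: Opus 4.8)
The plan is to argue by contradiction: suppose $\mathfrak{X} = \nc\nc\mathfrak{X}$ but $\mathfrak{X}$ fails to be a Ptolemy diagram. Then there exist two crossing $n$-admissible arcs $u = (r,s)$ and $v = (t,u')$ in $\mathfrak{X}$ (say with $r < t < s < u'$, after possibly relabelling) such that one of the four "short" arcs among $(r,t), (r,u'), (s,t), (s,u')$ is $n$-admissible but does not lie in $\mathfrak{X}$. Call this arc $w$. Since $\mathfrak{X} = \nc\nc\mathfrak{X}$ and $w \notin \mathfrak{X}$, the arc $w$ must cross some arc $w' \in \nc\mathfrak{X}$; in particular $w'$ crosses $w$ but crosses neither $u$ nor $v$. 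I would then derive a contradiction from the combinatorics of crossings: an arc that crosses one of the corners of the "quadrilateral" with vertices $r < t < s < u'$ is forced, by the interleaving inequalities, to cross at least one of the two diagonals $u$ or $v$.

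**The key geometric lemma.** The heart of the argument is the following elementary fact about the $\infty$-gon: if $r < t < s < u'$ and $w$ is one of the arcs $(r,t), (r,u'), (s,t), (s,u')$, then any arc $w' = (a,b)$ (with $a<b$) that crosses $w$ must cross $(r,s)$ or $(t,u')$. I would verify this by a short case analysis on which of the four arcs $w$ is. For instance, if $w = (r,s)$... wait, $w$ is a corner arc, not a diagonal, so take $w = (s, u')$: crossing $(s,u')$ means $a < s < b < u'$ or $s < a < u' < b$. In the first case, if also $a > t$ then $w'$ crosses $(t,u')$ (since $t < a < u' $ and we need $b$ placed appropriately — one checks $t<a<s<b<u'$ forces crossing of $(t,u')$ or of $(r,s)$ depending on whether $b$ exceeds... ), and if $a \le t$ one examines the position of $b$ relative to $s$ and $u'$. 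Each subcase pins $w'$ against one of the two diagonals. The only subtlety is that $n$-admissibility ($b - a \equiv 1 \pmod n$ and $b-a \ge 2$) must be respected, but crossing is a purely order-theoretic condition, so I can first find the order-theoretic contradiction and it does not matter whether intermediate arcs are admissible — the arc $w'$ we are handed is already admissible by hypothesis.

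**Assembling the contradiction.** Once the geometric lemma is in hand, the argument closes quickly. We have $w' \in \nc\mathfrak{X}$, so $w'$ crosses no arc of $\mathfrak{X}$; but $u, v \in \mathfrak{X}$, so $w'$ crosses neither $u$ nor $v$. Yet $w'$ crosses $w$, and the lemma says $w'$ must cross $u$ or $v$ — contradiction. Hence no such $w'$ exists, which forces $w \in \nc\nc\mathfrak{X} = \mathfrak{X}$, contrary to the choice of $w$. Therefore $\mathfrak{X}$ is a Ptolemy diagram.

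**Main obstacle.** The only real work is the case analysis in the geometric lemma: there are four choices of corner arc $w$, and for each the crossing arc $w'$ splits into two orientation cases, each with further subcases according to where the endpoints of $w'$ fall among $r, t, s, u'$. I expect this to be tedious but entirely mechanical, with no conceptual difficulty; the reflection symmetry $r \leftrightarrow u'$, $t \leftrightarrow s$ of the configuration halves the number of genuinely distinct cases. No homological input beyond Lemma~\ref{g} is needed — the statement is purely about crossing patterns of arcs.
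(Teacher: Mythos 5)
Your proposal is correct and follows essentially the same route as the paper: assume a corner arc $w$ of the crossing configuration is admissible but not in $\XX=\nc\nc\XX$, produce an arc $w'\in\nc\XX$ crossing $w$, and contradict this with the purely order-theoretic observation that any arc crossing a corner of the quadrilateral $r<t<s<u'$ must cross one of the two diagonals $(r,s)$ or $(t,u')$. The paper states this key observation without proof exactly as you do, deferring the same mechanical case analysis.
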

\begin{proof}

Suppose that $u=(r,s)$ and $v=(t,u)$ are two crossing $n$-admissible arcs with $r<t<s<u$ in $\XX$, and suppose $(r,t)$ is an $n$-admissible arc, we need to show $(r,t)\in\XX=\nc\nc\XX$.

If $(r,t)\not\in\nc\nc\XX$, then there exists an $n$-admissible arc $w\in\nc\XX$ such that $w$ crosses $(r,t)$. Observe that every $n$-admissible arc $w\in\nc\XX$ crossing $(r,t)$ crosses either $u$ or $v$,  which is a contradiction with the assumption $u,v\in\XX$.
\end{proof}
By Theorem \ref{dd} and Lemma \ref{j}, we have the following corollary.
\begin{corollary}
Suppose that $\X$ is a subcategory of $\C_n(A_{\infty})$ and $\XX$ is the corresponding set of $n$-admissible arcs. If  $(\X,\Y)$ is an $n$-cotorsion pair for some subcategory $\Y$, then
\begin{enumerate}

  \item[\rm (1)] $\XX=\nc\nc\XX$ and each right-fountain of $\XX$ is a left-fountain of $\XX$.

\smallskip

  \item[\rm (2)]  $\XX$ is a Ptolemy diagram and each right-fountain of $\XX$ is a left-fountain of $\XX$.
\end{enumerate}
\end{corollary}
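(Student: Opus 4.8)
The plan is to read the corollary off directly from Theorem~\ref{dd} and Lemma~\ref{j}; no new argument is needed beyond some bookkeeping.

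First I would unpack what Theorem~\ref{dd} provides. Since $(\X,\Y)$ is an $n$-cotorsion pair, condition~(2) of that theorem holds; in particular $\XX=\nc\YY$, $\YY=\nc\XX$, and each right-fountain of $\XX$ is a left-fountain of $\XX$. Substituting the second equality into the first yields $\XX=\nc\YY=\nc(\nc\XX)=\nc\nc\XX$. Combined with the fountain condition just recorded, this is precisely part~(1). Note that the symmetric conditions on $\YY$ (that $\YY=\nc\nc\YY$ and each left-fountain of $\YY$ is a right-fountain) also drop out, but only the statements about $\XX$ are asserted in the corollary.

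For part~(2), I would feed the identity $\XX=\nc\nc\XX$ obtained in part~(1) into Lemma~\ref{j}, which then tells us that $\XX$ is a Ptolemy diagram; the fountain condition is inherited verbatim from part~(1), so (2) follows.

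I do not anticipate any genuine obstacle here, since the corollary is a formal consequence of the two cited results. The only point that deserves a moment of care is the translation between the combinatorial closure operator $\nc(-)$ and the homological double orthogonal it encodes, i.e.\ that $\nc\nc\XX$ is the set of $n$-admissible arcs corresponding to $\bigcap_{i=1}^{n}{}^{\perp_i}\bigl(\bigcap_{j=1}^{n}{}^{\perp_j}E(\XX)[j]\bigr)$-type conditions; but this is exactly what Lemma~\ref{g} together with the remark preceding Theorem~\ref{dd} supplies, so it requires no separate justification.
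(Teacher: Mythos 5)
Your proposal is correct and follows exactly the paper's route: the paper also derives part (1) by substituting $\YY=\nc\XX$ into $\XX=\nc\YY$ from Theorem~\ref{dd} (which also supplies the fountain condition), and then obtains part (2) by applying Lemma~\ref{j} to the identity $\XX=\nc\nc\XX$. No discrepancy to report.
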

\begin{remark}
Note that when $n=1$, by Lemma 3.17 in \cite{Ng} and Corollary 5.9 in \cite{CZZ1}, we have that (1) and (2) are equivalent, and they both can imply $(\X,\Y)$ is an cotorsion pair for some subcategory $\Y$, but this is not always true for  $n\geq 2$.
\end{remark}
\begin{example}
Suppose $n=3$, the Auslander-Reiten quiver  of $\C_3(A_{\infty})$ is shown in Figures \ref{figure1}, \ref{figure2} and \ref{figure3}. Let $\X=\add((-4,0)\oplus(-4,3)\oplus(-1,3))$. One can check that $\X$ is a Ptolemy diagram, but it does not satisfy $\X=\nc\nc\X$.
\end{example}
\subsection{Geometric realization of mutation of $n$-cotorsion pairs in $\C_n(A_{\infty})$}
In this subsection, we give a geometric realization of mutation of $n$-cotorsion pairs in $\C_n(A_{\infty})$. First, we need the following definitions.
\begin{definition}
Let $(\X,\Y)$ be an $n$-cotorsion pair in $\C_n(A_{\infty})$ for some $\Y$ and $\XX$ be the corresponding set of $n$-admissible arcs. The set of $n$-admissible arcs in $\XX$ that does not cross any $n$-admissible arcs in $\XX$ is called the \emph{frame} of $\XX$, denoted by $F_{\XX}$.
\end{definition}
By the assumption above, the subcategory $E(F_{\XX})$ which corresponds to $F_{\XX}$ is the core of the $n$-cotorsion pair $(\X,\Y)$, i.e. $\I(\X)=E(F_{\XX})$.
\begin{definition}
\begin{itemize}
\item [(1)] For any $n$-admissible arc $(t,u)$ of $\Pi$, its \emph{rotation} in $\Pi$, denoted by $\rho_{\Pi}(t,u)$, is defined to be $(t-1,u-1)$, i.e.
      $$\rho_{\Pi}(t,u)=(t-1,u-1).$$
\item [(2)] Let $\mathfrak{D}$ be a set of non-crossing $n$-admissible arcs of $\Pi$.
\begin{itemize}
  \item [(i)] The $n$-admissible arcs in $\mathfrak{D}$ divide $\Pi$ into polygons, called $\mathfrak{D}$-\emph{cells}. Thus, any $n$-admissible arc $(t,u)$ which neither is in $\mathfrak{D}$ nor crosses any $n$-admissible arcs in $\mathfrak{D}$ is an $n$-admissible arc of a $\mathfrak{D}$-cell. We call the rotation of $(t,u)$ in this $\mathfrak{D}$-cell the $\mathfrak{D}$-\emph{rotation} of $(t,u)$, and denote it by $\rho_{\D}(t,u)$.
  \item [(ii)] Let $(\X,\Y)$ be an $n$-cotorsion pair in $\C_n(A_{\infty})$ for some $\Y$ and $\XX$ be the corresponding set of $n$-admissible arcs with $\mathfrak{D}\subset F_{\XX}$. The $\mathfrak{D}$-\emph{rotation} of $\XX$ is defined as
      $$\rho_{\mathfrak{D}}(\XX):=\{\rho_{\mathfrak{D}}(i,j)|(i,j)\in\XX\backslash\mathfrak{D}\}\cup\mathfrak{D}.$$
\end{itemize}
\end{itemize}
\end{definition}
\begin{example}
Suppose that $n=3$, the Auslander-Reiten quiver  of $\C_3(A_{\infty})$ is shown in Figures \ref{figure1}, \ref{figure2} and \ref{figure3}. Let $\X=\add\{(-4,3)\oplus(-4,6)\}$ and $\Y=\add\{(-4,3)\oplus(-4,6))\oplus(r,s)\oplus(t,u)\oplus(k,\ell)\;|\;r<s\leq-4, u>t\geq6, k<-4,\ell> 6\}$. One can check that $(\X,\Y)$ is a 3-cotorsion pair and $F_{\X}=\add\{(-4,3)\oplus(-4,6)\}$. Let $\mathfrak{D}=\add\{(-4,6)\}$. Then the $\mathfrak{D}$-rotation of $\X$ and $\Y$ are the followings, see Figures \ref{figure5} and \ref{figure6}.
$$\rho_{\mathfrak{D}}(\X)=\add\{(2,6)\oplus(-4,6)\}$$
$$\rho_{\mathfrak{D}}(\Y)=\add\{(2,6)\oplus(-4,6))\oplus(r,s)\oplus(t,u)\oplus(k,\ell)\;|\;r<s\leq-4, u>t\geq6, k<-4,\ell> 6\}.$$  For example, $\rho_{\mathfrak{D}}(-4,9)=(-5,8),\rho_{\mathfrak{D}}(-7,6)=(-8,-4).$
So $(\rho_{\mathfrak{D}}(\X),\rho_{\mathfrak{D}}(\Y))$ is still a 3-cotorsion pair.
\begin{figure}[ht]\centering
\begin{tikzpicture}
\draw[thick] (-6.5,0) to (3,0);
\draw[thick, dashed] (-7.5,0) to (-6.5,0) (4,0) to (3,0);
\draw[thick] (-6,0)node{$\bullet$}node[below]{-8}(-5.5,0)node{$\bullet$}node[below]{-7}(-5,0)node{$\bullet$}node[below]{-6}(-4.5,0)node{$\bullet$}node[below]{-5}(-4,0)node{$\bullet$}node[below]{-4} (-3.5,0)node{$\bullet$}node[below]{-3} (-3,0)node{$\bullet$}node[below]{-2} (-2.5,0)node{$\bullet$}node[below]{-1} (-2,0)node{$\bullet$}node[below]{0}
(-1.5,0)node{$\bullet$}node[below]{1} (-1,0)node{$\bullet$}node[below]{2} (-0.5,0)node{$\bullet$}node[below]{3} (0,0)node{$\bullet$}node[below]{4} (0.5,0)node{$\bullet$}node[below]{5}(1,0)node{$\bullet$}node[below]{6}(1.5,0)node{$\bullet$}node[below]{7}
(2,0)node{$\bullet$}node[below]{8}(2.5,0)node{$\bullet$}node[below]{9};
\draw[thick][red] (-4,0) ..  controls +(45:2) and +(145:2) .. (1,0);
\draw[thick] (-4,0) ..  controls +(45:1) and +(145:1) .. (-0.5,0);
\end{tikzpicture}

$\qquad\left\downarrow\rule{0cm}{1cm}\right.\{(-4,6)\}\text{-rotation}$

\begin{tikzpicture}
\draw[thick] (-6.5,0) to (3,0);
\draw[thick, dashed] (-7.5,0) to (-6.5,0) (4,0) to (3,0);
\draw[thick] (-6,0)node{$\bullet$}node[below]{-8}(-5.5,0)node{$\bullet$}node[below]{-7}(-5,0)node{$\bullet$}node[below]{-6}(-4.5,0)node{$\bullet$}node[below]{-5}(-4,0)node{$\bullet$}node[below]{-4} (-3.5,0)node{$\bullet$}node[below]{-3} (-3,0)node{$\bullet$}node[below]{-2} (-2.5,0)node{$\bullet$}node[below]{-1} (-2,0)node{$\bullet$}node[below]{0}
(-1.5,0)node{$\bullet$}node[below]{1} (-1,0)node{$\bullet$}node[below]{2} (-0.5,0)node{$\bullet$}node[below]{3} (0,0)node{$\bullet$}node[below]{4} (0.5,0)node{$\bullet$}node[below]{5}(1,0)node{$\bullet$}node[below]{6}(1.5,0)node{$\bullet$}node[below]{7}
(2,0)node{$\bullet$}node[below]{8}(2.5,0)node{$\bullet$}node[below]{9};
\draw[thick][red] (-4,0) ..  controls +(45:2) and +(145:2) .. (1,0);
\draw[thick] (-1,0) ..  controls +(45:1) and +(145:1) .. (1,0);
\end{tikzpicture}
\caption{A $\mathfrak{D}$-rotation of $\X$}
\label{figure5}
\end{figure}
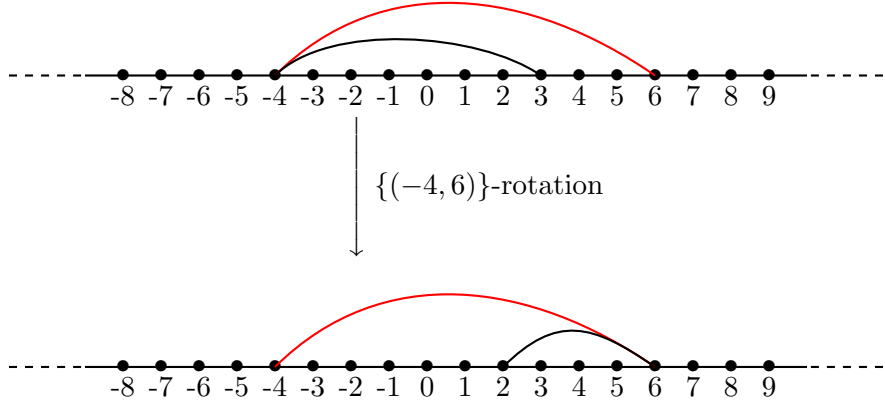
\begin{figure}[ht]\centering
\begin{tikzpicture}
\draw[thick] (-6.5,0) to (3,0);
\draw[thick, dashed] (-7.5,0) to (-6.5,0) (4,0) to (3,0);
\draw[thick] (-6,0)node{$\bullet$}node[below]{-8}(-5.5,0)node{$\bullet$}node[below]{-7}(-5,0)node{$\bullet$}node[below]{-6}(-4.5,0)node{$\bullet$}node[below]{-5}(-4,0)node{$\bullet$}node[below]{-4} (-3.5,0)node{$\bullet$}node[below]{-3} (-3,0)node{$\bullet$}node[below]{-2} (-2.5,0)node{$\bullet$}node[below]{-1} (-2,0)node{$\bullet$}node[below]{0}
(-1.5,0)node{$\bullet$}node[below]{1} (-1,0)node{$\bullet$}node[below]{2} (-0.5,0)node{$\bullet$}node[below]{3} (0,0)node{$\bullet$}node[below]{4} (0.5,0)node{$\bullet$}node[below]{5}(1,0)node{$\bullet$}node[below]{6}(1.5,0)node{$\bullet$}node[below]{7}
(2,0)node{$\bullet$}node[below]{8}(2.5,0)node{$\bullet$}node[below]{9};
\draw[thick][red] (-4,0) ..  controls +(45:2) and +(145:2) .. (1,0);
\draw[thick] (-4,0) ..  controls +(45:1) and +(145:1) .. (-0.5,0);
\draw[thick] (-4,0) ..  controls +(45:2.5) and +(145:2.5) .. (2.5,0);
\draw[thick] (-5.5,0) ..  controls +(45:2.5) and +(145:2.5) .. (1,0);
\draw[thick, dashed] (1,0) ..  controls +(45:1) and +(160:1) .. (4,1.5);
\draw[thick, dashed] (-4,0) ..  controls +(45:1) and +(160:1) .. (-6,1.5);
\end{tikzpicture}

$\qquad\left\downarrow\rule{0cm}{1cm}\right.\{(-4,6)\}\text{-rotation}$

\begin{tikzpicture}
\draw[thick] (-6.5,0) to (3,0);
\draw[thick, dashed] (-7.5,0) to (-6.5,0) (4,0) to (3,0);
\draw[thick] (-6,0)node{$\bullet$}node[below]{-8}(-5.5,0)node{$\bullet$}node[below]{-7}(-5,0)node{$\bullet$}node[below]{-6}(-4.5,0)node{$\bullet$}node[below]{-5}(-4,0)node{$\bullet$}node[below]{-4} (-3.5,0)node{$\bullet$}node[below]{-3} (-3,0)node{$\bullet$}node[below]{-2} (-2.5,0)node{$\bullet$}node[below]{-1} (-2,0)node{$\bullet$}node[below]{0}
(-1.5,0)node{$\bullet$}node[below]{1} (-1,0)node{$\bullet$}node[below]{2} (-0.5,0)node{$\bullet$}node[below]{3} (0,0)node{$\bullet$}node[below]{4} (0.5,0)node{$\bullet$}node[below]{5}(1,0)node{$\bullet$}node[below]{6}(1.5,0)node{$\bullet$}node[below]{7}
(2,0)node{$\bullet$}node[below]{8}(2.5,0)node{$\bullet$}node[below]{9};
\draw[thick][red] (-4,0) ..  controls +(45:2) and +(145:2) .. (1,0);
\draw[thick] (-1,0) ..  controls +(45:1) and +(145:1) .. (1,0);
\draw[thick] (-6,0) ..  controls +(45:1) and +(145:1) .. (-4,0);
\draw[thick] (-4.5,0) ..  controls +(45:2.5) and +(145:2.5) .. (2,0);
\draw[thick, dashed] (-4,0) ..  controls +(45:1) and +(160:1) .. (-2,1.5);
\draw[thick, dashed] (-4.5,0) ..  controls +(45:1) and +(160:1) .. (-6.5,1.5);
\end{tikzpicture}
\caption{A $\mathfrak{D}$-rotation of $\Y$}
\label{figure6}
\end{figure}
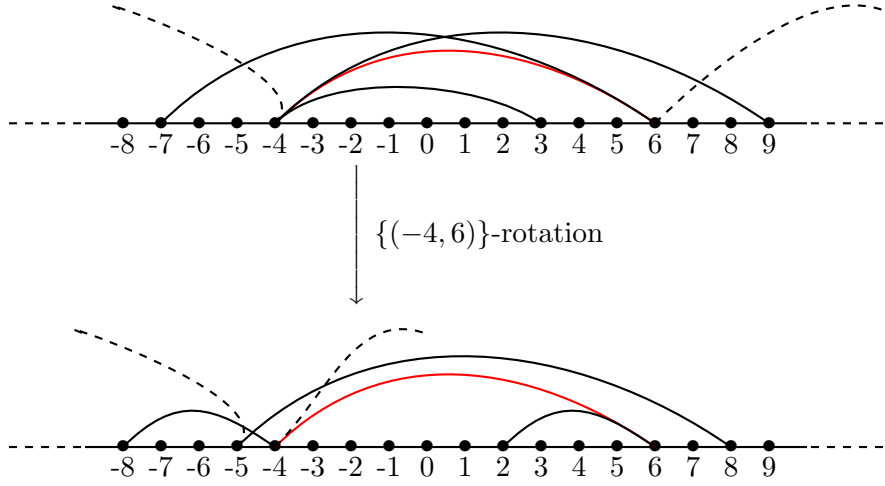
\end{example}

The following result is an easy observation by the Auslander-Reiten quiver  of $\C_n(A_{\infty})$.

\begin{lemma}\label{ee}
Suppose that $x=(r,s)$ and $y=(t,u)$ are two indecomposable objects in the same component of the Auslander-Reiten quiver  of $\C_n(A_{\infty})$ with $t<r<u<s$. Then there exists a triangle
$$(t,u)\rightarrow(t,s)\oplus(r,u)\rightarrow(r,s),$$
where $(x,y)$ is taken to be a zero object if $(x,y)$ is not an $n$-admissible arc.
\end{lemma}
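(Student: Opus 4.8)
The plan is to work inside the Auslander--Reiten component $\Gamma\cong\mathbb{Z}A_{\infty}$ of $\C_n(A_{\infty})$ containing $x=(r,s)$ and $y=(t,u)$. Since $x$ and $y$ lie in the same component we have $r\equiv t$ and $s\equiv u\pmod n$, so from $t<r<u<s$ one reads off at once that $(t,s)$ is always an $n$-admissible arc of $\Gamma$, while $(r,u)$ is an $n$-admissible arc of $\Gamma$ when $u-r\geq 2$ and equals the zero object precisely when $u-r=1$; in the drawing of $\Gamma$ the four objects $(t,u)$, $(t,s)$, $(r,u)$, $(r,s)$ occupy the corners of a rectangle, with $(t,u)$ and $(r,s)$ on one diagonal and $(t,s)$, $(r,u)$ on the other. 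The claimed triangle is then the ``exchange'' triangle of this rectangle.

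The first step is to produce the morphisms. By Lemma \ref{cc} each of the four Hom-spaces $\C_n(A_{\infty})((t,u),(t,s))$, $\C_n(A_{\infty})((t,u),(r,u))$, $\C_n(A_{\infty})((t,s),(r,s))$ and $\C_n(A_{\infty})((r,u),(r,s))$ is one-dimensional whenever the object involved is a genuine arc, so I fix nonzero maps $f_1,f_2,g_1,g_2$ accordingly. The composites $g_1f_1$ and $g_2f_2$ lie in $\C_n(A_{\infty})((t,u),(r,s))$; when $u-r=1$ this space vanishes, so the sequence degenerates to $y\xrightarrow{f_1}(t,s)\xrightarrow{g_1}(r,s)$, which is automatically a complex, while when $u-r\geq 2$ the space is one-dimensional and $g_1f_1$, $g_2f_2$ are both nonzero in it, so after rescaling $g_2$ we may arrange $g_1f_1=g_2f_2$. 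In either case $\binom{f_1}{f_2}\colon y\to(t,s)\oplus(r,u)$ and $(g_1,-g_2)\colon(t,s)\oplus(r,u)\to(r,s)$ compose to zero; completing $\binom{f_1}{f_2}$ to a triangle $y\to(t,s)\oplus(r,u)\to C\to\Sigma y$ then yields, by the triangulated structure, an induced morphism $\varphi\colon C\to(r,s)$.

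It remains to show that $\varphi$ is an isomorphism; this is the step I expect to be the main obstacle. The most hands-on route is a dimension count: applying $\C_n(A_{\infty})(M,-)$ and $\C_n(A_{\infty})(-,M)$ to the triangle defining $C$ and evaluating every term by Lemma \ref{cc}, one checks that $\dim_k\C_n(A_{\infty})(M,C)=\dim_k\C_n(A_{\infty})(M,(r,s))$ and $\dim_k\C_n(A_{\infty})(C,M)=\dim_k\C_n(A_{\infty})((r,s),M)$ for every indecomposable $M$, and that $\varphi$ induces surjections; since $\C_n(A_{\infty})$ is Hom-finite and Krull--Schmidt, $\varphi$ is then invertible. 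The bookkeeping is the only genuinely computational part, and it splits into cases according to where $M$ sits relative to the rectangle, the objects in the other $n-1$ components entering through the Serre-duality term $F^{-}(Sx)$ in Lemma \ref{cc}. A cleaner alternative, which I would present in the text, is to invoke the explicit description of triangles in $\C_n(A_{\infty})$ from \cite{HJ1}: each component being a standard $\mathbb{Z}A_{\infty}$, the triangle on such a rectangle is forced by its combinatorics, and in the degenerate case $u-r=1$ it is simply the short exact sequence $0\to M_{[t+1,u-1]}\to M_{[t+1,s-1]}\to M_{[r+1,s-1]}\to 0$ of interval objects --- the Mayer--Vietoris sequence of the intervals $[t+1,u-1]$ and $[r+1,s-1]$, whose union is $[t+1,s-1]$ and whose intersection is $[r+1,u-1]$. (A third option is induction on $(r-t)+(s-u)$ via the octahedral axiom, with Auslander--Reiten triangles as the base cases.) On any of these routes the remaining degenerate configurations --- $u-r=1$, or $(t,s)$ lying on the boundary row of $\Gamma$ --- are checked separately and cause no difficulty, which is why the statement is indeed the ``easy observation from the Auslander--Reiten quiver'' it is advertised to be.
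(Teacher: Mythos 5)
The paper does not actually prove Lemma~\ref{ee}: it is introduced with the sentence ``The following result is an easy observation by the Auslander--Reiten quiver of $\C_n(A_{\infty})$'' and no argument is supplied, so there is no in-paper proof to compare yours against. Your sketch is a correct and essentially standard way to substantiate the claim. The preliminary combinatorics is right: same component forces $u\equiv s$ and $t\equiv r \pmod n$, hence $(t,s)$ is always $n$-admissible, $(r,u)$ degenerates exactly when $u-r=1$, and the four objects sit at the corners of a rectangle in the $\mathbb{Z}A_\infty$ component; the construction of the candidate complex from the one-dimensional Hom-spaces of Lemma~\ref{cc} (rescaling so that $g_1f_1=g_2f_2$) is sound, since the relevant composites are nonzero in a standard component. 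The one step you rightly flag as carrying the real content --- identifying the cone of $\binom{f_1}{f_2}$ with $(r,s)$ --- is where the work lies; of the three routes you propose, the induction on $(r-t)+(s-u)$ via the octahedral axiom, starting from the Auslander--Reiten mesh triangles, is the cleanest, as it avoids the case analysis in the Hom-dimension count and treats the degenerate bottom-row configuration uniformly. (Your Mayer--Vietoris aside is only a heuristic and its union/intersection bookkeeping is slightly off in the degenerate case, but this does not affect the argument.) In short: your proposal supplies a proof where the paper offers none, and its outline is correct with the isomorphism step deferred to a routine but nontrivial verification.
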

Depending on the above observation, we have the following result.

\begin{lemma}\label{hh}
Suppose that $x=(r,s)$ and $y=(t,u)$ are two indecomposable objects in $\C_n(A_{\infty})$. Then
$\Ext^1(x,y)\neq 0$ if and only if $\Sigma^1 y\in F^{+}(x)\cup F^{-}(Sx)$. In this case, if $\Sigma^1 y\in F^{-}(Sx)$, then there exists a triangle
$$(t,u)\rightarrow(t,s)\oplus(r,u)\rightarrow(r,s),$$
if $\Sigma^1 y\in F^{+}(x)$, then there exists a triangle
$$(t,u)\rightarrow(s,u)\oplus(r,t)\rightarrow(r,s).$$
where $(x,y)$ is taken to be a zero object if $(x,y)$ is not an $n$-admissible arc.
\end{lemma}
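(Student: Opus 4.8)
The plan is to reduce both assertions to the $\Hom$-computation of Lemma~\ref{cc} together with the mesh relation of Lemma~\ref{ee}. Since $\C_n(A_{\infty})$ is triangulated, $\Ext^1(x,y)=\Hom_{\C_n(A_{\infty})}(x,\Sigma^1 y)$, so Lemma~\ref{cc} applied to the pair $(x,\Sigma^1 y)$ gives $\dim_k\Ext^1(x,y)=1$ when $\Sigma^1 y\in F^{+}(x)\cup F^{-}(Sx)$ and $\Ext^1(x,y)=0$ otherwise. This is the first assertion; it also shows that whenever $\Ext^1(x,y)\neq0$ the group is one-dimensional, so the non-split triangle $y\to E\to x\dashrightarrow$ is unique up to isomorphism, and it then suffices to exhibit, in each case, one non-split triangle of that shape with the prescribed middle term $E$.

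First I would fix the combinatorics of the two alternatives. By Lemma~\ref{g}, $\Ext^1(x,y)\neq0$ forces the arcs $x=(r,s)$ and $y=(t,u)$ to cross, so either $t<r<u<s$ or $r<t<s<u$. Using $\Sigma(a,b)=(a-1,b-1)$, $S=\Sigma^{n+1}$, the fact that $\Sigma$ permutes the $n$ Auslander-Reiten components cyclically (the component of $(a,b)$ being detected by the residue of $a$ modulo $n$), and the congruences $u-t\equiv s-r\equiv 1\pmod n$, one checks that $\Sigma^1 y\in F^{-}(Sx)$ happens precisely when $x$ and $y$ lie in the same component, which forces the interleaving $t<r<u<s$, while $\Sigma^1 y\in F^{+}(x)$ happens precisely when $\Sigma^1 y$ lies in the component of $x$, which forces $r<t<s<u$; here the key point is that, for $n\ge 2$, the arcs $(t,s),(r,u)$ are $n$-admissible exactly in the first situation and $(s,u),(r,t)$ exactly in the second, so the admissibility congruences force the interleaving, and then the shapes of $F^{\pm}$ in Figure~\ref{figure4} give the two equivalences. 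In the case $\Sigma^1 y\in F^{-}(Sx)$, Lemma~\ref{ee} applied to $(r,s)$, $(t,u)$ with $t<r<u<s$ produces the triangle $(t,u)\to(t,s)\oplus(r,u)\to(r,s)$ (a non-$n$-admissible pair being read as $0$). It does not split: the middle term is a sum of indecomposables indexed by $(t,s),(r,u)$, neither of which equals $(r,s)$, so $x$ is not a summand of $E$ (and if both summands vanish the triangle forces $x\cong\Sigma^1 y$, a consistent degenerate case). Being non-split, it realizes the unique nonzero class of $\Ext^1(x,y)$, which is the desired triangle.

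The case $\Sigma^1 y\in F^{+}(x)$ is the main obstacle, since Lemma~\ref{ee} does not apply verbatim: for $n\ge 2$ the objects $x$ and $y$ lie in \emph{adjacent} Auslander-Reiten components, and the arcs $(t,s),(r,u)$ of Lemma~\ref{ee} are then both non-$n$-admissible, so that relation degenerates. What is needed is the companion observation, proved by the same elementary inspection of the Auslander-Reiten quiver but now across two adjacent copies of $\mathbb{Z}A_{\infty}$: for $x=(r,s)$, $y=(t,u)$ with $r<t<s<u$ there is a triangle $(t,u)\to(s,u)\oplus(r,t)\to(r,s)$, obtained by completing the morphism $(r,t)\oplus(s,u)\to(r,s)$ to a triangle and identifying the third term via the meshes. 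With this in hand, non-splitness together with the one-dimensionality of $\Ext^1(x,y)$ finishes the argument as before. I expect the routine-but-delicate part to be the bookkeeping of the second paragraph, namely translating the region conditions $\Sigma^1 y\in F^{\pm}(\cdots)$ into the interleaving and component conditions on $(r,s)$ and $(t,u)$, together with setting up the companion of Lemma~\ref{ee}; once those are settled the remaining steps are immediate.
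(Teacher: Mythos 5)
Your first assertion and the case $\Sigma^1 y\in F^{-}(Sx)$ match the paper: both reduce the $\Ext$-criterion to Lemma~\ref{cc}, and in that case $x$ and $y$ lie in the same Auslander--Reiten component with interleaving $t<r<u<s$, so Lemma~\ref{ee} hands you the triangle directly. (Your detour through one-dimensionality of $\Ext^1(x,y)$ and uniqueness of the non-split triangle is harmless but unnecessary there.)

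The genuine gap is exactly where you flag ``the main obstacle'': in the case $\Sigma^1 y\in F^{+}(x)$ you never prove the existence of the triangle $(t,u)\rightarrow(s,u)\oplus(r,t)\rightarrow(r,s)$; you only assert a ``companion observation'' that can be ``proved by the same elementary inspection'' or ``obtained by completing the morphism $(r,t)\oplus(s,u)\to(r,s)$ to a triangle and identifying the third term via the meshes.'' That identification is not routine: here $x$ and $y$ lie in \emph{different} components of the Auslander--Reiten quiver, so the mesh relations of a single copy of $\mathbb{Z}A_{\infty}$ do not determine the cone of that map, and ``the same elementary inspection'' is precisely what Lemma~\ref{ee} does not cover. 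The paper closes this gap with an actual construction: it introduces the auxiliary arc $(t,s+1)$ (checking it is $n$-admissible and lies in the same component as $(t,u)$), applies Lemma~\ref{ee} twice \emph{within} single components to obtain the triangles $(t,s+1)\rightarrow(t,u)\rightarrow(s,u)\rightarrow(t-1,s)$ and $(t,s+1)\rightarrow(r,t)\rightarrow(r,s)\rightarrow(t-1,s)$, uses $\Hom((s,u),\Sigma^1(r,t))=0$ (from non-crossing together with Lemma~\ref{g}) to assemble a morphism of triangles between them, and then invokes Lemma~2.2 of Xiao--Zhu to splice the two into the desired triangle. Some such construction, or an explicit computation of the cone of $(r,t)\oplus(s,u)\to(r,s)$, is required; without it the second half of the lemma remains unproved.
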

\begin{proof}
By Lemma \ref{cc}, we have $\Ext^1(x,y)\neq 0$ if and only if $\Sigma^1 y\in F^{+}(x)\cup F^{-}(Sx)$. Now we show the existence of the triangle. First, suppose $\Sigma^1 y\in F^{-}(Sx)$. Then $x$ and $y$ are in the same component of the Auslander-Reiten quiver of $\C_n(A_{\infty})$. By the proof of Proposition 2.8 in \cite{HJ1}, we have $u\equiv s(\mod n)$, $t\leq r-n$ and $r+1\leq u\leq s-n$. Thus, $t<r<u<s$ and $(t,s)$ (resp. $(r,u)$) is either an $n$-admissible arc or an edge. So the triangle
$$(t,u)\rightarrow(t,s)\oplus(r,u)\rightarrow(r,s)$$
exists by Lemma \ref{ee}.

Suppose $\Sigma^1 y\in F^{+}(x)$. Then $x$ is in the ``next" component of $y$. Since $\Sigma^1 y\in F^{+}(x)$, by the proof of Proposition 2.8 in \cite{HJ1}, we have $u\equiv s+1(\mod n)$, $r+1\leq t \leq s-n$ and $s+1\leq u$. Thus, $r<t<s<u$ and $(r,t)$ (resp. $(s,u)$) is either an $n$-admissible arc or an edge. Observe that $(t,s+1)$ is an $n$-admissible arc (since $u\equiv s+1(\mod n)$ and $u\equiv t+1(\mod n)$, we have $s\equiv t(\mod n)$), and $(t,s+1)$ is in the same component as $(t,u)$. Moreover, $(s,u)$ is also in the same component as $(t,u)$, by Lemma \ref{ee}, there exists a  triangle
$$(t,s+1)\rightarrow(t,u)\rightarrow(s,u)\rightarrow(t-1,s),$$
where $(t-1,s)=\Sigma^1 (t,s+1)$ is in the same component as $x$. Note that $(r,t)$ and $x=(r,s)$ are in the same component, so are $(t-1,s)$ and $x$. By Lemma \ref{ee}, there exists a  triangle
$$(t,s+1)\rightarrow(r,t)\rightarrow(r,s)\rightarrow(t-1,s).$$
Since  $(s,u)$ and $(r,t)$ do not cross, $\Hom((s,u),\Sigma^1(r,t))=0$ by Lemma \ref{g}. We have a communicative diagram
$$\begin{array}{ccccccc}
(t,s+1)&\rightarrow&(t,u)&\rightarrow&(s,u)&\rightarrow&(t-1,s)\\
\parallel&&\downarrow &&\downarrow&&\parallel\\
(t,s+1)&\rightarrow&(r,t)&\rightarrow&(r,s)&\rightarrow&(t-1,s).
\end{array}$$
By Lemma 2.2 in \cite{XZ}, we get the triangle
$$(t,u)\rightarrow(s,u)\oplus(r,t)\rightarrow(r,s).$$
\end{proof}

\begin{theorem}\label{mainresult}
Let $(\X,\Y)$ be an $n$-cotorsion pair in $\C_n(A_{\infty})$ for some $\Y$, $\XX$ be the corresponding set of $n$-admissible arcs, and $F_{\XX}$ be the frame of $\XX$. Then the followings hold.
\begin{itemize}
  \item [\rm (1)] The frame $F_{\XX}$ of $\XX$ corresponds to the core of the $n$-cotorsion pair  $(\X,\Y)$, i.e. $\I(\X)=E(F_{\XX})$, where $E(F_{\XX})$ represents the subcategory of $\C_n(A_{\infty})$ which corresponds to $F_{\XX}$.
   \item [\rm (2)] Let $\mathfrak{D}\subset F_{\XX}$ be a set of non-crossing $n$-admissible arcs and $E(\mathfrak{D})$ be the corresponding subcategory of $\C_n(A_{\infty})$.  Then we have that
       $$\mu^{-1}_{E(\mathfrak{D})}(\X)=E(\rho_{\mathfrak{D}}(\XX)).$$
\end{itemize}
\end{theorem}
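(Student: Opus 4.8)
\emph{Part~(1)} is immediate from Theorem~\ref{dd}. By definition $F_{\XX}$ is the set of arcs of $\XX$ that cross no arc of $\XX$, i.e. $F_{\XX}=\XX\cap\nc\XX$; and since $(\X,\Y)$ is an $n$-cotorsion pair, Theorem~\ref{dd} gives $\YY=\nc\XX$, so $F_{\XX}=\XX\cap\YY$. As $E$ is an inclusion-preserving bijection between sets of $n$-admissible arcs and subcategories of $\C_n(A_{\infty})$, it carries intersections to intersections, whence $E(F_{\XX})=E(\XX)\cap E(\YY)=\X\cap\Y=\I(\X)$.

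\emph{Part~(2).} First I would record that $E(\mathfrak{D})$ is $(n+1)$-rigid by Lemma~\ref{g} (its arcs are pairwise non-crossing), and by part~(1) it lies in $\I(\X)$; moreover, since the Serre functor of $\C_n(A_{\infty})$ is $\Sigma^{n+1}$, Serre duality relates $\E^{k}(A,B)$ with $\E^{n+1-k}(B,A)$, so $\bigcap_{k=1}^{n}{}^{\perp_{k}}E(\mathfrak{D})=\bigcap_{k=1}^{n}E(\mathfrak{D})^{\perp_{k}}$. Because the left $E(\mathfrak{D})$-approximation of a direct sum is the direct sum of left $E(\mathfrak{D})$-approximations, it suffices to compute, for each indecomposable $M_{(i,j)}$ with $(i,j)\in\XX$, the cone of a left $E(\mathfrak{D})$-approximation of $M_{(i,j)}$; the claim is that this cone is $0$ when $(i,j)\in\mathfrak{D}$ and $M_{\rho_{\mathfrak{D}}(i,j)}$ otherwise.

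For the claim, fix $(i,j)\in\XX\setminus\mathfrak{D}$. Every arc of $\mathfrak{D}\subseteq F_{\XX}$ crosses no arc of $\XX$, so $(i,j)$ crosses no arc of $\mathfrak{D}$ and is therefore a chord of a unique $\mathfrak{D}$-cell, whence $\rho_{\mathfrak{D}}(i,j)$ is defined. A case analysis on the position of $(i,j)$ inside that cell (whether its endpoints are incident to arcs of $\mathfrak{D}$, and whether the cell is finite), combined with the triangles of Lemma~\ref{ee} and Lemma~\ref{hh} applied after checking their hypotheses --- in particular that $M_{(i,j)}$ and $M_{\rho_{\mathfrak{D}}(i,j)}$ cross, by Lemma~\ref{g} --- should produce a triangle
$$M_{(i,j)}\ \xrightarrow{\,f\,}\ D_{(i,j)}\ \longrightarrow\ M_{\rho_{\mathfrak{D}}(i,j)}\ \longrightarrow\ \Sigma M_{(i,j)},$$
where $D_{(i,j)}$ is the direct sum of the (at most two) arcs of $\mathfrak{D}$ cut off from $(i,j)$ by the rotation, an edge of $\Pi$ being read as $0$; thus $D_{(i,j)}\in E(\mathfrak{D})$ and the cone is exactly $M_{\rho_{\mathfrak{D}}(i,j)}$. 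It then remains to check that $f$ is a left $E(\mathfrak{D})$-approximation, i.e. that every morphism $M_{(i,j)}\to M_{e}$ with $e\in\mathfrak{D}$ factors through $f$: using Lemma~\ref{cc} one identifies the arcs $e\in\mathfrak{D}$ with $\Hom_{\C_n(A_{\infty})}(M_{(i,j)},M_{e})\neq0$ as exactly those lying on the ``outer'' side of the chord $(i,j)$ in its cell, the legs constituting $D_{(i,j)}$ being the two such arcs adjacent to $(i,j)$, and each deeper one being reached by composing through a leg.

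Assembling, the cones arising from indecomposables of $\X$ are precisely the $M_{\rho_{\mathfrak{D}}(i,j)}$, $(i,j)\in\XX\setminus\mathfrak{D}$, while $E(\mathfrak{D})$ itself lies in $\mu^{-1}_{E(\mathfrak{D})}(\X)$ (realized by the triangle $0\to D\xrightarrow{\mathrm{id}}D\to0$); since a non-minimal left $E(\mathfrak{D})$-approximation only adds a summand from $E(\mathfrak{D})$ to the cone, $\mu^{-1}_{E(\mathfrak{D})}(\X)$ is generated by $E(\mathfrak{D})$ together with these cones, which is exactly $E(\rho_{\mathfrak{D}}(\XX))$. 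I expect the main obstacle to be the last verification --- reading off $\Hom_{\C_n(A_{\infty})}(M_{(i,j)},E(\mathfrak{D}))$ from the cell geometry and dispatching the boundary cases (a leg degenerating to an edge of $\Pi$, an infinite cell, an endpoint of $(i,j)$ where two arcs of $\mathfrak{D}$ meet). An alternative route that avoids the explicit triangles is to apply Theorem~\ref{main}, which identifies $\mu^{-1}_{E(\mathfrak{D})}(\X)$ with the preimage under $\Z\to\mathfrak{U}$ of $\overline{\X}\langle1\rangle$, thereby reducing all of part~(2) to the single statement that the suspension $\langle1\rangle$ of the subfactor triangulated category $\mathfrak{U}=\Z/E(\mathfrak{D})$ acts on arcs as the $\mathfrak{D}$-rotation.
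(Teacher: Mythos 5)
Your proposal follows essentially the same route as the paper: reduce to indecomposables, locate $(i,j)$ in its $\mathfrak{D}$-cell, and exhibit the triangle $M_{(i,j)}\to D_{(i,j)}\to M_{\rho_{\mathfrak{D}}(i,j)}$ with $D_{(i,j)}$ the (possibly degenerate) cell edges, which is exactly what the paper does via Lemma~\ref{hh} in the two crossing configurations $t<r<u<s$ and $r<t<s<u$; your treatment of part~(1) and of adding $E(\mathfrak{D})$ back via $0\to D\to D$ also matches. The one place where you diverge --- and which you flag as the main obstacle --- is the verification that $f$ is a left $E(\mathfrak{D})$-approximation: you propose to compute $\Hom_{\C_n(A_{\infty})}(M_{(i,j)},M_e)$ for all $e\in\mathfrak{D}$ from the cell geometry and chase factorizations, whereas the paper's argument dissolves this entirely. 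Since $\rho_{\mathfrak{D}}(i,j)$ is an arc of the $\mathfrak{D}$-cell, it crosses no arc of $\mathfrak{D}$, so Lemma~\ref{g} gives $M_{\rho_{\mathfrak{D}}(i,j)}\in\bigcap_{k=1}^{n}{}^{\perp_k}E(\mathfrak{D})$; applying $\Hom_{\C_n(A_{\infty})}(-,D)$ to the triangle and using $\E^1(M_{\rho_{\mathfrak{D}}(i,j)},D)=0$ shows at once that every map $M_{(i,j)}\to D$ factors through $f$, with no case analysis on fountains, infinite cells, or degenerate legs. I would recommend replacing your Hom-space bookkeeping with this one-line $\Ext$-vanishing argument; with that substitution your proof is complete and coincides with the paper's.
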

\begin{proof}
It is obvious that (1) holds.

In order to prove (2), it is enough to show that for any $n$-admissible arc $(t,u)$ in $\XX$ which is not in $\mathfrak{D}$, if $\rho_{\mathfrak{D}}(t,u)=(r,s)$, then $\mu^{-1}_{E(\mathfrak{D})}(t,u)=(r,s)$.

Let $\mathfrak{X}^\prime$ be the $\mathfrak{D}$-cell containing $(t,u)$. Since $\rho_{\mathfrak{D}}(t,u)=(r,s)$, $(r,s)$ is the rotation of $(t,u)$ in $\mathfrak{X}^\prime$. That means $(r,s)$ is an $n$-admissible arc of $\mathfrak{X}^\prime$, so $(r,s)$ does not cross any  $n$-admissible arc in $\mathfrak{D}$, we have $(r,s)\in\bigcap\limits_{i=1}^{n}{^{\perp_i}} E(\mathfrak{D})$.

Since $(r,s)$ is an $n$-admissible arc, $(t,u)$ crosses $(r,s)$. We have the following two cases.
\begin{enumerate}
  \item[(1)] Suppose $t<r<u<s$. Then $(r,u)$ and $(t, s)$ are edges of $\mathfrak{X}^\prime$. Thus, $(r,u)$ (resp. $(s,t)$) is  a zero object or a non-zero object in $E(\mathfrak{D})$, i.e. $(r,u)\oplus (s,t)\in E(\mathfrak{D})$. By Lemma \ref{hh}, there exists a triangle
   $$(t,u)\xrightarrow{f}(t,s)\oplus (r,u)\rightarrow (r,s).$$
   Since $(r,s)\in\bigcap\limits_{i=1}^{n}{^{\perp_i}}E(\mathfrak{D})$, this implies that $f$ is a left $E(\mathfrak{D})$-approximation and it is minimal since $(r,s)$ is indecomposable. So by definition, $\mu^{-1}_{E(\mathfrak{D})}(t,u)=(r,s)$ and we complete the proof of $\mu^{-1}_{E(\mathfrak{D})}(\X)=E(\rho_{\mathfrak{D}}(\XX)).$
  \item [(2)] Suppose $r<t<s<u$. Then $(r,t)$ and $(s, u)$ are edges of $\mathfrak{X}^\prime$. Thus, $(r,t)$ (resp. $(s,u)$) is  a zero object or a non-zero object in $E(\mathfrak{D})$, i.e. $(r,t)\oplus (s,u)\in E(\mathfrak{D})$. By Lemma \ref{hh}, there exists a triangle
   $$(t,u)\xrightarrow{f} (r,t)\oplus (s,u)\rightarrow (r,s).$$
   Since $(r,s)\in\bigcap\limits_{i=1}^{n}{^{\perp_i}}E(\mathfrak{D})$, this implies that $f$ is a left $E(\mathfrak{D})$-approximation and it is minimal since $(r,s)$ is indecomposable. So by definition, $\mu^{-1}_{E(\mathfrak{D})}(t,u)=(r,s)$ and we complete the proof of $\mu^{-1}_{E(\mathfrak{D})}(\X)=E(\rho_{\mathfrak{D}}(\XX)).$
\end{enumerate}
\end{proof}

\vspace{1cm}
\hspace{-5mm}\textbf{Data Availability}\hspace{2mm} Data sharing not applicable to this article as no datasets were generated or analysed during
the current study.
\vspace{2mm}

\hspace{-5mm}\textbf{Conflict of Interests}\hspace{2mm} The authors declare that they have no conflicts of interest to this work.

\end{document}